\numberwithin{equation}{section}
\newtheorem{lemma}[equation]{Lemma}
\newtheorem{proposition}[equation]{Proposition}
\newtheorem{theorem}[equation]{Theorem}
\newtheorem{corollary}[equation]{Corollary}
\newtheorem{thmx}{Theorem}
\theoremstyle{definition}
\newtheorem{definition}[equation]{Definition}
\newtheorem*{definition*}{Definition}
\theoremstyle{remark}
\newtheorem{remark}[equation]{Remark}
\newtheorem*{remark*}{Remark}
\newcommand{\Sp}{\mathrm{Sp}}
\newcommand{\SU}{\mathrm{SU}}
\newcommand{\U}{\mathrm{U}}
\newcommand{\SO}{\mathrm{SO}}
\newcommand{\CP}[1]{\mathbb{C}P^{#1}}
\newcommand{\Spin}{\mathrm{Spin}}
\newcommand{\GL}{\mathrm{GL}}
\newcommand{\ig}{\scriptscriptstyle}
\title{Geometric transitions with Spin(7) holonomy via a dynamical system}
\author{Fabian Lehmann \vspace{-3mm}}
\affil{Simons Center for Geometry and Physics, Stony Brook University
\\ \vspace*{1mm}
\textup{flehmann@scgp.stonybrook.edu}}
\date{\today}
\begin{document}

\maketitle

\begin{abstract}
We clarify the global geometry of two 1-parameter families of cohomogeneity one Spin(7) holonomy metrics with generic orbit the Aloff--Wallach space $N(1,-1) \cong \SU(3)/\U(1)$ and singular orbits $S^5$ and $\CP{2}$, which at short distance were shown to exist by Reidegeld.
The two families fit into the geography of previously known families of
cohomogeneity one metrics with exceptional holonomy and provide a Spin(7) analogue of the well-known conifold transition in the setting of Calabi--Yau 3-folds.
Furthermore, we discover that there is another transition to families of Spin(7) holonomy metrics which have a similar asymptotic behaviour on one end, but are singular on the other end.
We obtain our results by relating the Spin(7)-equations to a simple dynamical system on a 3-dimensional cube.
\end{abstract}

\tableofcontents

\section{Introduction}

Spin(7) and $\mathrm{G}_2$ are the two exceptional holonomy groups in Berger's classification of holonomy groups of Riemannian manifolds. Their significance lies in the fact that exceptional holonomy metrics are Ricci-flat, but not K\"ahler.
The first example of a metric with holonomy Spin(7) was given by Bryant in 1987 \cite{Bryant}. Two years later Bryant--Salamon found the first complete example \cite{BS}. In the 1990s most work in the area concentrated on compact manifolds \cite{big-joyce}. 
The search for metrics with exceptional holonomy has a different flavour in the non-compact setting compared to the compact setting.
A Bochner type argument shows that every Killing field on a compact Ricci-flat manifold is parallel. Therefore, 
compact irreducible manifolds with exceptional holonomy do not admit any continuous symmetries.  All known constructions in the compact setting rely on perturbative techniques starting from some degenerate limit. In contrast, symmetry reduction methods are a powerful approach in the non-compact setting. As all homogeneous Ricci-flat metrics are flat, the strongest reduction are symmetries with cohomogeneity one, i.e. where generic orbits have codimension one. The condition that a Spin(7)-structure is torsion-free reduces from a non-linear PDE system to a non-linear ODE system. The examples of an incomplete and complete Spin(7) holonomy metric by Bryant and Bryant--Salamon, respectively,  both have a cohomogeneity one symmetry.

An important geometric aspect of complete non-compact Spin(7)-manifolds is the asymptotic behaviour. The complete example given by Bryant--Salamon, which lives  
on the bundle $\mathbf{S}_+(S^4)$ of positive spinors on the 4-sphere,  is an \textit{asymptotically conical} (AC) manifold: at infinity the geometry converges to a cone with holonomy Spin(7). In this sense it is a Spin(7)-analogue of the famous Eguchi--Hanson hyperk\"ahler metric on $T^*S^2$.
The condition that the holonomy of a Riemannian cone is contained in Spin(7) is equivalent to the condition that the metric on the link is induced by a nearly parallel $\mathrm{G}_2$-structure. The link of the asymptotic cone of the Bryant--Salamon example is the ``squashed'' 7-sphere.

In the early 2000s the study of cohomogeneity one Spin(7)-manifolds gained fresh impetus by the work of Cveti\v{c}--Gibbons--L\"u--Pope. In \cite{CGLP1} they look for further examples with generic orbit $S^7$.
In their study a new type of asymptotics emerges: \textit{asymptotically locally conical} (ALC) manifolds at infinity locally look like the product of a cone and a circle of fixed size.
These are analogues of \textit{asymptotically locally flat} (ALF) hyperk\"ahler 4-manifolds such as the well-known Taub-NUT metric on $\mathbb{C}^2$.
On each of $\mathbb{R}^8$ and $\mathbf{S}_+(S^4)$ they find an explicit ALC Spin(7) holonomy metric, which they call the $\mathbb{A}_8$ and $\mathbb{B}_8$ metric, respectively.
In \cite{CGLP2} they consider a more general ansatz.
Based on numerics, they suggest that the $\mathbb{B}_8$ metric 
is part of a family of Spin(7) holonomy metrics  in a neighbourhood of $S^4\subset \mathbf{S}_+(S^4)$, which depends up to scale on one parameter $q$ and exhibits a behaviour as sketched in  diagram \ref{figure}.

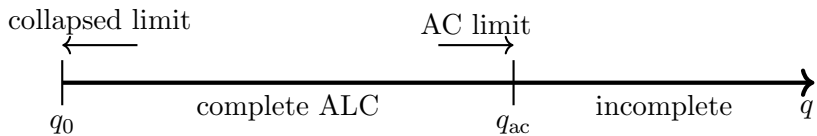
\begin{figure}[h]
\centering
\begin{tikzpicture}
\draw[thick] (0,.3) -- (0, -.3) node[below]{$q_0$};
\draw[ultra thick, ->] (0,0) --node[below]{complete ALC} (6,0)-- node[below]{incomplete} (10,0);
\draw[] (9.9, -.1) node[below]{$q$};
\draw[thick] (6,.3) -- (6,-.3) node[below]{$q_{\mathrm{ac}}$};
\draw[<-,thick] (0, .5) -- node[above]{collapsed limit}(1, .5);
\draw[->,thick] (5,.5)--node[above]{AC limit}(6, .5);
\end{tikzpicture}
\caption{Typical behaviour of a 1-parameter family of cohomogeneity one metrics with exceptional holonomy defined in a neighbourhood of the singular orbit.}
\label{figure}
\end{figure}

\noindent
In the interior of an interval $(q_0,q_{\mathrm{ac}})$ the torsion-free Spin(7)-structures are complete and ALC. At the explicit value $q_{\mathrm{ac}}$ the asymptotic geometry transitions from ALC to AC, the so-called \textit{AC limit}. This AC Spin(7)-manifold is the classical example by Bryant--Salamon. At the other endpoint of the interval the asymptotic circle of the ALC manifolds shrinks  and disappears in the limit. The family converges in the Gromov--Hausdorff topology to the Bryant--Salamon AC $\mathrm{G}_2$ holonomy metric on $\Lambda^2_{-}(S^4)$. This is called the \textit{collapsed limit}. They predict a similar 1-parameter family on $K_{\CP{3}}$, which they call the $\mathbb{C}_8$ family. Here the AC manifold, which appears at the AC limit, is asymptotic to the cone over a finite quotient of $S^7$ equipped with the round metric. It has first been discovered by Calabi \cite{Calabi} and its holonomy is $\SU(4)$ rather than Spin(7). The existence of the $\mathbb{B}_8$ and $\mathbb{C}_8$ families has been established by Bazaikin (cf. \cite{bazaikin1, bazaikin2}).

Cohomogeneity one cones with holonomy Spin(7), or equivalently homogeneous nearly parallel $\mathrm{G}_2$-manifolds with one Killing spinor, have been classified in \cite{NP-G2}. Apart from the ``squashed'' $S^7$ and the isotropy irreducible space $\SO(5)/\SO(3)$, all other examples live on the Aloff--Wallach spaces. 
If $(k,l)$ is a pair of integers which are not both zero, $\U(1)$ can be embedded via $e^{i\theta}\mapsto \mathrm{diag}(e^{i k\theta},e^{i l\theta},e^{-i(k+l)\theta})$ into the maximal torus of diagonal matrices in $\SU(3)$. 
Denote this subgroup by $\U(1)_{k,l}$. The Aloff--Wallach space $N(k,l)$ is the quotient $\SU(3)/\U(1)_{k,l}$. Each Aloff--Wallach space carries a homogeneous nearly parallel $\mathrm{G}_2$-structure.

Bazaikin \cite{bazaikin2} considered cohomogeneity one Spin(7)-structures with generic orbit $N(1,1)$ or one of the related Aloff--Wallach spaces $N(1,-2)$ and $N(-2,1)$. He again finds 1-parameter families which behave as sketched in Figure \ref{figure}. If the generic orbit is $N(1,1)$, the resulting space is the orbifold $T^*\CP{2}/\mathbb{Z}_2$.
The AC limit is obtained  by replacing $S^4$ with the non-spin manifold $\CP{2}$ in Bryant--Salamon's construction of an AC Spin(7) holonomy metric on the bundle of positive spinors on $S^4$. 
If the generic orbit is $N(1,-2)$, the underlying space is the canonical bundle of the flag manifold $F_3=\SU(3)/\U(1)^2$. Similarly to the $\mathbb{C}_8$ family, the AC limit has been constructed earlier by Calabi and has holonomy $\SU(4)$.

In this article we consider cohomogeneity one Spin(7) holonomy metrics with generic orbit isomorphic to $N(1,-1)$. Our motivation stems from the collapsed limit sketched in Figure \ref{figure}.
In \cite{BS} Bryant--Salamon constructed an AC $\mathrm{G}_2$ holonomy metric on $\Lambda^2_{-}\CP{2}$ which is asymptotic to the cone over the homogeneous nearly K\"ahler structure on the flag manifold $F_3=\SU(3)/\U(1)^2$. Because $N(1,-1)$ is a circle bundle over  $F_3$ and $(0,\infty)\times N(1,-1)$ can topologically be completed by adding a $S^5$, which is a circle bundle over $\CP{2}$, it is plausible that there exist torsion-free ALC Spin(7)-structures which collapse to the Bryant--Salamon metric on $\Lambda^2_{-}\CP{2}$. Very close to the collapsed limit, this has recently been proved independently by Foscolo \cite{Foscolo} with PDE methods.

We now turn to the formulation of our two main results.

\begin{thmx}
\label{theorem1}
Denote the adjoint bundle of the principal $\SU(2)$-bundle
\\ 
$\SU(3)\rightarrow \SU(3)/\SU(2)\cong S^5$ by $M_{S^5}$. 
$\SU(3)\times \U(1)$
acts on $M_{S^5}$ with cohomogeneity one, and  the generic orbit is the Aloff--Wallach space $N(1,-1)$.

There exists a 1-parameter family (up to scale) $\Psi_{\mu}$, $\mu\in(0,\infty)$, of $\SU(3)\times \U(1)$-invariant Spin(7) holonomy metrics in a neighbourhood of $S^5$ in $M_{S^5}$ and a distinguished parameter $\mu_{\mathrm{ac}} > 0$ such that 
\begin{itemize}
\item $\Psi_{\mu}$ is complete on $M_{S^5}$ and asymptotically locally conical (ALC) if $\mu\in(0,\mu_{\mathrm{ac}})$,
\item $\Psi_{\mu}$ is complete on $M_{S^5}$ and asymptotically conical (AC) if $\mu = \mu_{\mathrm{ac}}$,
\item $\Psi_{\mu}$ is incomplete if $\mu \in (\mu_{\mathrm{ac}},\infty)$.
\end{itemize}
$\Psi_{\mu_{\mathrm{ac}}}$ is asymptotic to the Spin(7)-cone over the unique $\SU(3)\times \U(1)$-invariant nearly parallel $\mathrm{G}_2$-structure on $N(1,-1)$.
\end{thmx}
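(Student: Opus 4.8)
The plan is to first set up the cohomogeneity one Spin(7)-equations as an ODE system on the space of $\SU(3)\times\U(1)$-invariant $\mathrm{G}_2$-structures on the principal orbit $N(1,-1)$. Since $\SU(3)/\U(1)_{1,-1}$ has a reductive decomposition whose isotropy summands are well understood (one $1$-dimensional summand coming from the circle direction in the maximal torus and two $3$-dimensional summands), an invariant $\mathrm{G}_2$-structure is parametrised by finitely many functions of the radial variable $t$, say $a(t), b(t), c(t), f(t)$ (squashing parameters for the summands and the fibre). The torsion-free condition $d\Psi = 0$ for the corresponding Spin(7)-form $\Psi = dt\wedge\varphi + \psi$ then becomes a first-order autonomous ODE system in these functions, together with the algebraic constraint that $\varphi$ be a $\mathrm{G}_2$-structure. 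Following the paper's stated strategy, I would pass to scale-invariant variables (ratios of the $a,b,c,f$ and a ``logarithmic'' reparametrisation of $t$) so that the system descends to a flow on a compact region — the $3$-dimensional cube of the abstract — whose faces and edges correspond to the various degeneracy and asymptotic regimes.

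Next I would identify, inside this compactified phase space, the special points and trajectories of geometric significance. The singular orbit $S^5$ corresponds to a specific boundary stratum where the appropriate combination of the functions vanishes with prescribed rates; by the standard smooth-extension analysis (Eschenburg--Wang type conditions, or an explicit Taylor expansion matching the normal bundle of $S^5$ in $M_{S^5}$), the solutions regular at $S^5$ form a $1$-parameter family (after quotienting by scaling), which is precisely the family $\Psi_\mu$ whose short-distance existence was established by Reidegeld. I would then locate the fixed point(s) of the compactified flow corresponding to (i) the Spin(7)-cone over the invariant nearly parallel $\mathrm{G}_2$-structure on $N(1,-1)$ — this is a conical solution, hence a genuine fixed point in the scale-invariant variables — and (ii) the ALC end, which corresponds to a fixed point where the fibre parameter $f$ stabilises to a positive constant while the base cone opens up, i.e. the link degenerates to (cone over $F_3$) $\times S^1$. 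The AC and ALC asymptotics are then read off as statements about which fixed point a trajectory limits to, and the $\mathrm{G}_2$-structure on $N(1,-1)$ that governs the cone at the AC limit is the one sitting at fixed point (i).

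The heart of the argument, and the main obstacle, is the global analysis of the $1$-parameter family of trajectories emanating from the $S^5$ stratum: I must show that as $\mu$ increases from $0$ the corresponding trajectory first runs to the ALC fixed point (complete, ALC), that there is a \emph{unique} distinguished value $\mu_{\mathrm{ac}}$ at which the trajectory instead limits to the conical fixed point (i) (complete, AC), and that for $\mu > \mu_{\mathrm{ac}}$ the trajectory leaves the physical region in finite time through a face corresponding to a curvature singularity (incomplete). This is a connectedness/monotonicity argument in a $2$-parameter shooting problem (shoot from $S^5$, detect the behaviour at infinity), and the delicate points are: (a) ruling out that trajectories spiral or exhibit more complicated $\omega$-limit behaviour — this should follow from monotonicity of a suitable Lyapunov-type function along the flow, e.g. a ratio that is monotone by the ODEs and separates the relevant fixed points; (b) showing the ALC fixed point is a local attractor (a linearisation/eigenvalue computation at that fixed point) so that the ALC regime is open in $\mu$; and (c) showing that the conical fixed point is hyperbolic with a stable manifold of the correct codimension, so that hitting it is a codimension-one condition in the family, pinning down $\mu_{\mathrm{ac}}$ uniquely. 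I would also need to verify completeness along the ALC and AC trajectories (the radial variable $t \to \infty$, i.e. the flow takes infinite parameter-time to reach the limiting fixed point) by estimating the reparametrisation factor near the fixed points, and conversely that incomplete trajectories reach the bad face in finite $t$. Once the qualitative picture on the cube is established, translating back to the completeness, ALC/AC, and cone-identification statements of Theorem~\ref{theorem1} is essentially bookkeeping.
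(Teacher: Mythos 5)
Your outline matches the paper's overall strategy: the ODE system in $(a,b,c,f)$, passage to scale-invariant coordinates giving a flow on a cube, identification of the fixed points corresponding to the singular orbit, the ALC end, and the Spin(7)-cone, and a shooting/connectedness argument to sort the family $\Psi_\mu$ into ALC, AC and incomplete regimes. However, there are two genuine gaps, one of which is serious.

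The serious gap is your argument (c) for the \emph{uniqueness} of $\mu_{\mathrm{ac}}$. You claim that because the conical fixed point $(X_c,Y_c,Z_c)$ is hyperbolic with a stable manifold of codimension one, ``hitting it is a codimension-one condition in the family, pinning down $\mu_{\mathrm{ac}}$ uniquely.'' That is not valid reasoning: a codimension-one stable manifold only tells you that the set of $\mu$ for which the trajectory limits to the cone is closed and has empty interior; a $1$-parameter family of trajectories can a priori cross the $2$-dimensional stable manifold many times, so there could be several values of $\mu$ giving AC solutions. The paper establishes uniqueness by an entirely different mechanism: a strict ordering (comparison) result (Lemma~\ref{comparison of solutions}, applied to the family $\Psi_\mu$ in Lemma~\ref{comparison for Phu-mu}) showing that for $\mu_1 < \mu_2$ the associated trajectories satisfy $X_1 > X_2$, $Y_1 > Y_2$, $Z_1 < Z_2$ for all time, combined with Lemma~\ref{uniqueness-simple}, which shows two such strictly ordered complete trajectories cannot both converge to the interior fixed point $(X_c,Y_c,Z_c)$. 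This comparison argument does double duty: it also determines that the ALC parameters are exactly $(0,\mu_{\mathrm{ac}})$ and the incomplete ones $(\mu_{\mathrm{ac}},\infty)$, which is more than openness of the two regimes tells you. Your proposal contains nothing that plays this role.

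The second (smaller) gap is that you never verify that both the ALC regime and the incomplete regime are actually non-empty, which is needed for the connectedness argument to produce the AC parameter at all. Openness of the ALC regime (sink) and of the incomplete regime (leaving the cube through $Y=0$ is an open condition) are correctly identified, but you need explicit endpoints. The paper obtains non-emptiness by analysing the limit trajectories: as $\mu\to 0$ the associated flow line degenerates to the explicit Bryant--Salamon trajectory~\eqref{BS-XYZ}, which converges to the sink $(1,1,0)$, so small $\mu$ lie in $\frak{X}_{\mathrm{alc}}$; as $\mu\to\infty$ the limiting flow line lies in the invariant face $X=0$ and must eventually exit through $Y=0$, so large $\mu$ lie in $\frak{X}_{\mathrm{inc}}$. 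Without an argument of this sort the connectedness step is vacuous. Your remark about ruling out more complicated $\omega$-limit behaviour via a monotone quantity is in the right spirit — the paper's Proposition~\ref{prop-crit point convergence} shows each of $X,Y,Z$ eventually becomes monotone by a chamber analysis of the signs of the derivatives — but that is a convergence-to-fixed-point statement, not a uniqueness argument, and should not be conflated with point (c).
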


\newpage

\begin{thmx}
\label{thmB}
Denote the universal quotient bundle of $\CP{2}$ by $M_{\CP{2}}$: the fibre of  $M_{\CP{2}}$ at $l\in \CP{2}$, which corresponds to a 1-dimensional linear subspace of $\mathbb{C}^3$, is the quotient $\mathbb{C}^3/l$.
$\SU(3)\times \U(1)$ acts on $M_{\CP{2}}$ with cohomogeneity one, and  the generic orbit is the Aloff--Wallach space $N(1,0)$, which is $\SU(3)$-equivariantly diffeomorphic to $N(1,-1)$.

There exists a 1-parameter family (up to scale) $\Upsilon_{\tau}$, $\tau\in \mathbb{R}$, of $\SU(3)\times \U(1)$-invariant Spin(7) holonomy metrics in a neighbourhood of $\CP{2}$ in $M_{\CP{2}}$ and a distinguished parameter $\tau_{\mathrm{ac}}\in\mathbb{R}$ such that
\begin{itemize}
\item 
$\Upsilon_{\tau}$ is complete on $M_{\CP{2}}$ and asymptotically locally conical (ALC) if $\tau\in(-\infty,\tau_{\mathrm{ac}})$,
\item $\Upsilon_{\tau}$ is complete on $M_{\CP{2}}$ and asymptotically conical (AC) if $\tau = \tau_{\mathrm{ac}}$,
\item $\Upsilon_{\tau}$ is incomplete if $\tau \in (\tau_{\mathrm{ac}},\infty)$.
\end{itemize}
$\Upsilon_{\tau_{\mathrm{ac}}}$ is asymptotic to the Spin(7)-cone over the unique $\SU(3)\times \U(1)$-invariant nearly parallel $\mathrm{G}_2$-structure on $N(1,-1)$.
The zero section $\CP{2}\subset M_{\CP{2}}$ is a Cayley submanifold with respect to $\Upsilon_{\tau}$ for all $\tau$. The cohomology class of $\Upsilon_{\tau}$ is non-trivial and does not depend on $\tau$.
\end{thmx}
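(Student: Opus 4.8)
The plan is to pass, as in the rest of the paper, from the torsion-free condition for an $\SU(3)\times\U(1)$-invariant $\Spin(7)$-structure on a tubular neighbourhood of the zero section $\CP{2}\subset M_{\CP{2}}$ to a first-order ODE system in the radial variable, and then to recast that system as an autonomous flow on a compact $3$-dimensional cube whose boundary faces record the possible degenerations of the metric. Concretely, one writes the metric on a generic orbit $N(1,0)\cong N(1,-1)$ in the $\U(1)_{1,-1}$-invariant coframe, imposes closedness of the induced Cayley $4$-form, solves algebraically for the redundant metric functions, and finally passes to scale-invariant ratios of the remaining functions; this yields the $3$-dimensional dynamical system of the introduction. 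Requiring that the resulting $\Spin(7)$-structure extend smoothly over the singular orbit $\CP{2}$ forces the Taylor coefficients at the zero section to obey a finite set of algebraic constraints, and after quotienting by the residual homothety the solution set of these constraints is a single real line, which we parametrise by $\tau\in\mathbb{R}$. Each $\tau$ then determines a unique trajectory of the dynamical system issuing from the face $F_{\CP{2}}$ of the cube corresponding to the collapse onto $\CP{2}$; real-analyticity of this trajectory near $F_{\CP{2}}$ — equivalently, genuine short-distance existence of $\Upsilon_\tau$ on a neighbourhood of $\CP{2}$ — is the statement already established by Reidegeld.

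The core of the argument is the \emph{global} behaviour of the $\tau$-trajectory on all of the cube. The relevant fixed points in the closed cube are isolated; among them are a fixed point $P_{\mathrm{AC}}$ corresponding to the $\Spin(7)$-cone over the unique $\SU(3)\times\U(1)$-invariant nearly parallel $\mathrm{G}_2$-structure on $N(1,-1)$, which is a saddle, and (on a face or edge of the cube) the locus $P_{\mathrm{ALC}}$ whose linearised data translates into the product asymptotics $\mathrm{cone}\times S^1$. Convergence of a trajectory to the saddle $P_{\mathrm{AC}}$ is a codimension-one condition. Using the monotone and conserved quantities available for the first-order $\Spin(7)$-system one shows that the $\tau$-trajectory stays in the interior of the cube for as long as it is defined, depends continuously on $\tau$, and that there is a threshold $\tau_{\mathrm{ac}}$ with the following trichotomy: for $\tau<\tau_{\mathrm{ac}}$ the trajectory is complete and converges to $P_{\mathrm{ALC}}$, giving a complete ALC metric; for $\tau=\tau_{\mathrm{ac}}$ it lies on the stable manifold of $P_{\mathrm{AC}}$ and converges to it, giving a complete metric asymptotic to the stated $\Spin(7)$-cone over $N(1,-1)$; for $\tau>\tau_{\mathrm{ac}}$ it leaves the cube in finite radial time through a face on which the metric degenerates with unbounded curvature, so $\Upsilon_\tau$ is incomplete. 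Completeness in the first two cases follows because the radial coordinate then exhausts $[0,\infty)$ at infinite distance while the metric stays non-degenerate, and smoothness at $\CP{2}$ was built into the germ; the ALC and AC decay rates are read off from the eigenvalues of the linearisations at $P_{\mathrm{ALC}}$ and $P_{\mathrm{AC}}$. This shooting-and-continuity analysis — in particular the existence and uniqueness of $\tau_{\mathrm{ac}}$ and the trichotomy — is the main obstacle, and is precisely where the detailed structure of the $3$-dimensional phase portrait is needed.

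Finally I would treat the zero section and the cohomology class together. Because $\CP{2}=\SU(3)/\U(2)$ is isotropy irreducible, the $\SU(3)$-invariant $4$-form $\Psi_\tau|_{\CP{2}}$ — the restriction of the Cayley form to the singular orbit — is a constant multiple $\lambda_\tau$ of the volume form of the induced (Fubini–Study) metric on $\CP{2}$; evaluating the explicit $\Spin(7)$-form of the germ at one point of the zero section and invoking the smoothness constraints shows $\lambda_\tau=1$ with respect to the complex orientation, so $\CP{2}$ is calibrated by $\Psi_\tau$, i.e. Cayley, for every $\tau$. For the cohomology class, $M_{\CP{2}}$ is a rank-two complex vector bundle over $\CP{2}$ and hence homotopy equivalent to $\CP{2}$; thus $H^4(M_{\CP{2}};\mathbb{R})\cong\mathbb{R}$ and $H_4(M_{\CP{2}};\mathbb{R})$ is generated by the zero section $[\CP{2}]$. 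A closed $4$-form on $M_{\CP{2}}$ is therefore exact if and only if its period over $[\CP{2}]$ vanishes; but $\int_{[\CP{2}]}\Psi_\tau=\Vol_{\Upsilon_\tau}(\CP{2})>0$ by the Cayley property, so $[\Psi_\tau]\neq 0$. Moreover the induced metric on the singular orbit $\CP{2}$, and hence its volume, is determined by the fixed scale-normalised Taylor data at the zero section and so is independent of $\tau$; since a class in $H^4(M_{\CP{2}};\mathbb{R})\cong\mathbb{R}$ is detected by this single period, $[\Psi_\tau]$ does not depend on $\tau$.
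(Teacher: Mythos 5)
Your outline matches the paper's overall strategy: pass from the torsion-free equation to the ODE system \eqref{ODE system}, then to the scale-invariant flow \eqref{ODE system in good coord} on the cube $\mathcal{W}$, use Reidegeld's short-distance existence (Theorem \ref{reidegeld-short distance existence-CP2}) to launch trajectories from the vertex $(1,0,0)$ corresponding to $\CP{2}$, and run a shooting argument between the ALC sink $(1,1,0)$, the cone saddle $(X_c,Y_c,Z_c)$, and degeneration at $Y=0$. The Cayley and cohomology parts are essentially the paper's argument: the paper reads $\Upsilon_{\tau}|_{\CP{2}}$ directly off \eqref{Spin(7)-structure} and \eqref{short-distance-asy-exp-cp2}, obtaining $e_{4356}$ (the induced volume form in the fixed normalisation), and your appeal to isotropy irreducibility of $\CP{2}$ is an unneeded but harmless detour to the same computation; the cohomology argument (generator of $H_4$, positive $\tau$-independent period) is identical.

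What is missing is the content of the central trichotomy. Your phrase ``using the monotone and conserved quantities'' is a placeholder: the flow \eqref{ODE system in good coord} has no conserved quantities, and observing that convergence to the saddle $(X_c,Y_c,Z_c)$ is codimension one does not by itself exclude periodic or wandering behaviour in the cube, nor does it produce a single threshold $\tau_{\mathrm{ac}}$ with strict interval structure. The paper supplies the specific mechanisms: (i) forward-invariance of $\mathcal{W}$ until a trajectory crosses $Y=0$ (Lemma \ref{bounds in xyz coords}); (ii) convergence of bounded forward-complete trajectories to a fixed point, via a chamber analysis of the signs of $\dot{X},\dot{Y},\dot{Z}$ (Proposition \ref{prop-crit point convergence}); (iii) a monotone comparison ordering the trajectories of $\Upsilon_{\tau_1}$ and $\Upsilon_{\tau_2}$ componentwise for $\tau_1<\tau_2$ (Lemmas \ref{comparison of solutions} and \ref{comparison for Phu-mu}); and (iv) an argument that two so-ordered complete trajectories cannot both limit to the cone (Lemma \ref{uniqueness-simple}). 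From these, openness of the ALC and incomplete parameter sets follows because $(1,1,0)$ is a sink and because $\dot{Y}=-2Z<0$ at $Y=0$; non-emptiness follows by examining the limiting trajectories as $\tau\to\pm\infty$; connectedness of $\mathbb{R}$ together with (iii) and (iv) then gives existence and uniqueness of $\tau_{\mathrm{ac}}$ and the interval structure. Without the comparison lemma in particular, which is what converts the dynamical dichotomy into monotone dependence on $\tau$, the trichotomy cannot be concluded from the phase portrait alone.
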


Again the 1-parameter families $\Psi_{\mu}$ and $\Upsilon_{\tau}$
exhibit a behaviour as illustrated in Figure \ref{figure}.
$\Psi_{\mathrm{ac}}$ and $\Upsilon_{\mathrm{ac}}$ are the first complete AC Spin(7) holonomy metrics proven to exist since Bryant--Salamon's original example on $\mathbf{S}_{+}S^4$. Theorems \ref{theorem1} and \ref{thmB} were previously conjectured by Cveti\v{c}--Gibbons--L\"u--Pope \cite{CGLP2} and Gukov--Sparks--Tong \cite{Gukov-Sparks-Tong}. 
Similar to the $\mathbb{B}_8$-family, 
there exists a specific parameter $\tau^* < \tau_{\mathrm{ac}}$
for which the ALC Spin(7) holonomy metric $\Upsilon_{\tau^*}$ has an explicit expression, which was found by
Cveti\v{c}--Gibbons--L\"u--Pope \cite{CGLP2}, 
Gukov--Sparks \cite{Gukov-Sparks} and Kanno--Yasui \cite{Kanno-Yasui}. To the knowledge of the author, there is no explicit expression known of $\Psi_{\mu}$ for any $\mu$.

The 1-parameter families $\Psi_{\mu}$ and $\Upsilon_{\tau}$ have first been constructed by Reidegeld \cite{Reidegeld-paper} in a neighbourhood of the exceptional orbits $S^5$ and $\CP{2}$, respectively. Whether they give rise to complete metrics was left open.
A non-compact cohomogeneity one space with principal orbit isomorphic to $N(1,-1)$ can also be topologically completed by adding as a singular orbit either the flag manifold $F_3$ or $\SU(3)/\SO(3)$, the space of linear special Lagrangian subspaces of $\mathbb{C}^3$ \cite[Lemma 5.4.2]{reidegeld}. However, Reidegeld showed that there can be no torsion-free Spin(7)-structures on the resulting spaces which are invariant by all of $\SU(3)\times \U(1)$ \cite[pp. 177-178, pp. 192-197]{reidegeld}.

There is a qualitative difference in the collapsed limit of the two families $\Psi_{\mu}$ and $\Upsilon_{\tau}$. Because $S^5$ is a circle bundle over $\CP{2}$, $M_{S^5}$ globally has the structure of a circle bundle over $\Lambda^2_{-}\CP{2}$. As $\mu\rightarrow 0$, the collapse occurs with bounded curvature similar to the well-known collapse of Berger's sphere. In contrast, the manifold $M_{\CP{2}}$ has the structure of a circle bundle only outside the zero section. The fixed locus of the circle action is precisely the zero section. Therefore, the curvature blows up on $\CP{2}$ as $\tau\rightarrow -\infty$. Because Foscolo's analytic method \cite{Foscolo} only applies for collapse with bounded curvature, the result about the existence of a continuous family of complete ALC Spin(7) holonomy metrics on $M_{\CP{2}}$ is new.

The AC Spin(7) holonomy manifolds $(M_{S^5}, \Psi_{\mu_{\mathrm{ac}}})$
and $(M_{\CP{2}}, \Upsilon_{\tau_{\mathrm{ac}}})$ resemble the well-known conifold transition of Calabi--Yau 3-folds: the \textit{smoothing} $T^*S^3$ of the conifold
$\{(z_1,z_2,z_3,z_4)\in\mathbb{C}^4|\ z_1^2+z_2^2+z_3^2+z_4^2 = 0 \}$  and its \textit{small resolution} $\mathcal{O}(-1)\oplus \mathcal{O}(-1) \rightarrow \CP{1}$ are topologically different spaces which carry  cohomogeneity one AC Calabi--Yau metrics asymptotic to the same Calabi--Yau cone, the conifold.
In analogy to the Calabi--Yau setting, the Spin(7) conifold transition has been conjectured by
Gukov--Sparks--Tong \cite{Gukov-Sparks-Tong}. 
The collapsed limit 
also has a lower dimensional analogue. The $\mathbb{D}_7$ family of cohomogeneity one $\mathrm{G}_2$ holonomy metrics on $\mathbf{S}(S^3) = S^3\times \mathbb{R}^4$ collapses with bounded curvature to the AC Calabi--Yau metric on the small resolution of the conifold, while the curvature blows up on the zero section $S^3$ as the $\mathbb{B}_7$ family of cohomogeneity one $\mathrm{G}_2$ holonomy metrics on $\mathbf{S}(S^3) = S^3\times \mathbb{R}^4$ collapses to the AC Calabi--Yau metric on the smoothing of the conifold. 
Again Gukov--Sparks--Tong conjecture that the families $\Psi_{\mu}$ and $\Upsilon_{\tau}$ provide an analogue of this phenomenon.

\subsection*{Strategy to prove Theorems \ref{theorem1} and \ref{thmB} and relation to a dynamical system}
The key step in proving Theorems \ref{theorem1} and \ref{thmB} is to establish the existence of the AC spaces. The behaviour of the remaining family members, which lead to ALC and incomplete metrics, can be deduced by a comparison argument. 
In Bazaikin's work on the $\mathbb{B}_8$ family, the $\mathbb{C}_8$ family, and on cohomogeneity one Spin(7)-manifolds with generic orbit isomorphic to $N(1,1)$, the AC limit was known beforehand. Moreover, in these examples the AC spaces enjoy additional symmetry as compared to other family members and are given by an explicit expression. Foscolo--Haskins--Nordstr\"om \cite{FHN2} consider problems in the context of cohomogeneity one $\mathrm{G}_2$-manifolds in which the AC limits are not known beforehand. They solve this problem by ``shooting from infinity''.
By deforming the $\mathrm{G}_2$-cone, they construct a family of AC ends,
i.e. torsion-free AC $\mathrm{G}_2$-structures defined outside a compact subset. Building upon a good understanding of the space of AC ends and their asymtpotics to the cone, they show that one of these AC ends 
extends smoothly over a singular orbit and thus gives a complete AC $\mathrm{G}_2$ holonomy metric.
Homogeneous nearly K\"ahler manifolds, which are the links of cohomogeneity one $\mathrm{G}_2$-cones, are normal and standard homogeneous spaces, as are the squashed 7-sphere and the 3-Sasakian metrics on $S^7$ and $N(1,1)$, which are the links of the cones in Bazaikin's work. In this sense, we are in a less symmetric setting, as the homogeneous nearly parallel $\mathrm{G}_2$-structure on $N(1,-1)$ is neither standard nor normal. We want to avoid computations involving them and therefore pursue a different strategy as in \cite{FHN2}.
Our method allows us to deduce the existence of the AC metric
from the existence of ALC solutions and incomplete metrics.
More specifically, we show that the corresponding sets of parameters which give rise to complete ALC solutions and incomplete metrics each are  open and non-empty. In addition, we manage to show that the complement of these two sets are precisely the parameters which give rise to complete AC solutions. In particular, because the set of all parameters is connected, we deduce the existence of an AC solution.

In the following we give a brief overview of how we carry out this strategy.
Outside the singular orbit a torsion-free $G$-invariant Spin(7)-structure can be interpreted as a trajectory in the space $\mathcal{S}$ of co-closed $G$-invariant $\mathrm{G}_2$-structures on the principal orbit $G/H$ given as a solution of the evolution equation \eqref{fundamental evo eqn}. 
Our main emphasis is the choice of ``good coordinates'' on the state space $\mathcal{S}$, which in our case  is 4-dimensional. 
The geometric setting suggests to use particular coordinates $(a,b,c,f)$
which 
allow us to conveniently read off the asymptotic behaviour of  complete
ALC and AC solutions.
The resulting ODE system with respect to these coordinates has been derived earlier by \cite{CGLP2} and \cite{Reidegeld-paper}.
However, from an analytic point of view it is difficult to work with the $(a,b,c,f)$-coordinates directly as the ODE system is highly non-linear and coupled. But the ODE system has the favourable property that its right-hand side is a homogeneous expression in $(a,b,c,f)$. This allows us to consider the ODE system in projective space, thereby eliminating one dimension. 
Through this projectivization, we thus arrive at a ``dynamic'' description of our Spin(7)-equations, where singular orbits and asymptotic models are given by fixed points of the dynamical system and complete torsion-free Spin(7)-structures correspond to trajectories connecting these fixed points. In particular, we obtain a complete list of possible asymptotic geometries. This approach is inspired by Atiyah--Hitchin's work \cite{AH} on gravitational instantons. 
In our setting, however, a further coordinate change on projective space is needed to gain control over all three remaining functions. 
In this new set of coordinates $(X,Y,Z)$ the trajectories of the families $\Psi_{\mu}$ and $\Upsilon_{\tau}$ 
emanate from fixed points located on vertices of the cube $\mathcal{W}=(0,1)\times(0,1)\times (0,5/4)$. Initially they enter the interior of the cube
which they can exit only at the face $Y=0$, where the associated Spin(7)-structure degenerates.
The ODE system in $(X,Y,Z)$-space has the key properties that there are no intrinsic singularities and complete flow lines contained in compact regions need to converge to a fixed point. By analysing all fixed points in the closure of $\mathcal{W}$, we see that the trajectories associated with the families $\Psi_{\mu}$ and $\Upsilon_{\tau}$ can only converge to a fixed point representing the asymptotic model of an ALC space or to the fixed point which corresponds to the Spin(7)-cone over $N(1,-1)$. Therefore, the Spin(7)-metrics $\Psi_{\mu}$ and $\Upsilon_{\tau}$ are either ALC, AC, or incomplete. Because the ALC end is a sink and 
exiting the cube at the face $Y=0$ is an open condition, the sets of parameters giving rise to ALC and incomplete solutions must both be open. To see that they are non-empty we study their limiting flow lines, e.g. as the parameter $\mu$ approaches $0$ or $\infty$. 

The space of flow lines which converge to the fixed point corresponding to the Spin(7)-cone is homeomorphic to a circle.
The two complete AC spaces $(M_{S^5}, \Psi_{\mu_{\mathrm{ac}}})$
and $(M_{\CP{2}}, \Upsilon_{\tau_{\mathrm{ac}}})$, which we have thus proven to exist, each are represented by a point on this circle. 
There are two flow lines which emanate from this fixed point.
With a more quantitative approach, which relies on studying extrema of the function $Y$, we can prove that one of these two flow lines  converges towards the fixed point corresponding to the ALC end. The geometric interpretation is a \textit{conically singular} (CS) ALC space. Indeed, we prove the existance of an $\SU(3)\times \U(1)$-invariant CS ALC Spin(7)-metric with principal orbit $N(1,-1)$.
Previously, other examples of CS Spin(7) metrics were given by the $\mathbb{A}_8$ metric on $\mathbb{R}^8$ and variations of its construction by replacing the $S^7$ with an arbitrary 3-Sasakian 7-manifold.

\begin{thmx}
\label{Theorem 2}
There exists a 1-parameter family  $\Psi^{\mathrm{cs}}_{\lambda}$, $\lambda\in\mathbb{R}$, of $\SU(3)\times \U(1)$-invariant Spin(7) holonomy metrics on $(0,\varepsilon(\lambda))_t \times N(1,-1)$, with $\varepsilon(\lambda) > 0$ for every $\lambda\in\mathbb{R}$,
which are conically singular (CS) as $t\rightarrow 0$ asymptotic to the Spin(7)-cone over the unique $\SU(3)\times \U(1)$-invariant nearly parallel $\mathrm{G}_2$-structure on $N(1,-1)$. 
The 1-parameter family has the following properties:
\begin{itemize}
\item If $\lambda < 0$, then $\Psi_{\lambda}^{\mathrm{cs}}$ extends to $(0,\infty)_t \times N(1,-1)$, is forward complete and asymptotically locally conical (ALC) as $t\rightarrow \infty$.
\item If $\lambda = 0$, then  $\Psi_{\lambda}^{\mathrm{cs}}$ 
is the Spin(7)-cone over the unique $\SU(3)\times \U(1)$-invariant nearly parallel $\mathrm{G}_2$-structure on $N(1,-1)$.
\item If $\lambda > 0$, then $\Psi_{\lambda}^{\mathrm{cs}}$ does not extend to a forward complete metric.
\end{itemize}
For a fixed sign of $\lambda$ the Spin(7)-structures $\Psi^{\mathrm{cs}}_{\lambda}$ are related by scaling.
\end{thmx}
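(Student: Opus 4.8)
The plan is to extract Theorem~C entirely from the local phase portrait of the $(X,Y,Z)$ dynamical system near the fixed point $P_{\mathrm{cone}}$ which represents the Spin(7)-cone over the invariant nearly parallel $\mathrm{G}_2$-structure on $N(1,-1)$. First I would locate $P_{\mathrm{cone}}$ in $\overline{\mathcal{W}}$ and linearise the vector field there. The decisive input is the eigenvalue count: the Jacobian at $P_{\mathrm{cone}}$ should have exactly one eigenvalue with positive real part and none on the imaginary axis, so that $P_{\mathrm{cone}}$ carries a $1$-dimensional unstable manifold $W^{u}$ and a $2$-dimensional stable manifold $W^{s}$. The stable manifold is the one whose orbits form the circle of AC ends already used for the limiting members $\Psi_{\mu_{\mathrm{ac}}}$ and $\Upsilon_{\tau_{\mathrm{ac}}}$ of Theorems~\ref{theorem1} and \ref{thmB}, whereas $W^{u}$ is what produces the CS family. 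By the stable--unstable manifold theorem $W^{u}$ is an embedded (analytic) arc through $P_{\mathrm{cone}}$; deleting the fixed point leaves precisely two orbits, the two flow lines emanating from $P_{\mathrm{cone}}$.

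Next I would lift these orbits from the projectivised system back to the $(a,b,c,f)$-system. Because the passage to $(X,Y,Z)$ quotients out the overall scaling of the Spin(7)-structure, each of the two emanating orbits lifts to a one-parameter scaling family of $\SU(3)\times\U(1)$-invariant torsion-free Spin(7)-structures defined for small $t$, and the orbit consisting of $P_{\mathrm{cone}}$ alone lifts to the cone and its rescalings. This yields the parametrisation $\Psi^{\mathrm{cs}}_{\lambda}$, $\lambda\in\mathbb{R}$: the sign of $\lambda$ picks the branch of $W^{u}$, $\lambda=0$ is the cone, and within a fixed sign two members lie on the same orbit and hence differ only by flow-reparametrisation, i.e.\ by scaling, which is the last sentence of the theorem. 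To pin down the $t\to 0$ asymptotics I would follow the geometric radial coordinate $t$ along an orbit of $W^{u}$: approaching $P_{\mathrm{cone}}$ along the unstable eigendirection forces $(a,b,c,f)$ to their cone values at the polynomial rate set by the positive eigenvalue $\nu>0$, and the integrating factor relating dynamical time to $t$ is such that $t\to 0$; substituting the decay rate into the metric ansatz gives $g=dt^{2}+t^{2}g_{N(1,-1)}+O(t^{1+\delta})$ with derivatives controlled, which is exactly the CS condition asymptotic to the Spin(7)-cone. So far this establishes the existence of the family and the $\lambda=0$ entry.

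It then remains to trace the two branches forward in $t$. For one branch I would show, from the sign analysis of the vector field on $\overline{\mathcal{W}}$ combined with the quantitative study of the extrema of $Y$ (the function whose vanishing degenerates the Spin(7)-structure), that the orbit is trapped in a compact subset of $\mathcal{W}$ with $Y$ bounded below; since a complete flow line confined to a compact region must converge to a fixed point, and the only candidate consistent with this branch is the ALC sink $P_{\mathrm{ALC}}$, the orbit converges to $P_{\mathrm{ALC}}$ as dynamical time $\to +\infty$, which translates into a forward-complete metric that is ALC as $t\to\infty$ — the case $\lambda<0$. For the other branch the same $Y$-monotonicity analysis forces the orbit to the face $Y=0$ at a finite value $t=\varepsilon(\lambda)<\infty$ (one must also check $t$ stays finite there), so the Spin(7)-structure degenerates and the metric does not extend to a forward-complete one — the case $\lambda>0$. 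I expect the main obstacle to be precisely the trapping argument for the $\lambda<0$ branch, namely ruling out that this orbit escapes through $Y=0$ and producing the correct barrier/monotone quantity built from $Y$ and its critical points, together with the linearisation check that $P_{\mathrm{cone}}$ really has a one-dimensional unstable manifold and no centre direction (so that ``two emanating flow lines'' is correct and no centre-manifold analysis is needed).
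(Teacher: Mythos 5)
Your overall strategy is the paper's: identify $P_{\mathrm{cone}}=(X_c,Y_c,Z_c)$ as a hyperbolic fixed point with a one-dimensional unstable manifold $W^u$ (and check there is no centre direction), interpret the two branches of $W^u\setminus\{P_{\mathrm{cone}}\}$ as the CS families up to scale, trace them forward, and show one is trapped toward the ALC sink while the other is forced to degenerate. Two points, one substantive, one more technical.

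The substantive gap is exactly the one you flag at the end, and it is more than a routine estimate: you need the explicit quantity
\[
Q(X,Y)\;=\;-3X+\frac{5Y^2-6Y+5}{1+Y},
\]
whose zero set is precisely where $Y$ can have an interior critical point, and whose sign governs whether such a critical point is a minimum or a maximum; together with the slices $\{Y\gtrless Y_c\}$ this carves the unit square into four regions $\mathcal{D}_1,\dots,\mathcal{D}_4$. The decisive fact (Lemma~\ref{barrier lemma}) is that on the common boundary of $\mathcal{D}_1,\mathcal{D}_2$ one has $\tfrac{d}{ds}Q<0$ whenever $\dot Y\ge 0$, and on the common boundary of $\mathcal{D}_3,\mathcal{D}_4$ one has $\tfrac{d}{ds}Q>0$ whenever $\dot Y\le 0$; this is what makes $\mathcal{D}_2$ a trap for trajectories entering with $\dot Y\ge 0$ and $\mathcal{D}_3$ a trap for those entering with $\dot Y\le 0$. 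The proof of this boundary estimate involves bounding $\partial_X Q\cdot\dot X$ and $\partial_Y Q\cdot\dot Y$ using $\dot Y\gtrless 0$ together with the parametrisation \eqref{equation for x,y on boundary} of $\{Q=0\}$, and reduces to a sign check for a specific quartic polynomial in $Y$; nothing in your sketch produces this. You also need, from the leading term of the unstable eigenvector, that the two branches of $W^u$ actually enter $\mathcal{D}_2$ resp.\ $\mathcal{D}_3$ with the correct sign of $\dot Y$ — this is read off from the $\lambda$-linear term of the short-distance expansion \eqref{short time asy exp cs sol}, which gives $Q(X(t),Y(t))\approx 14.41\,\lambda\,t^{\nu_2}$ and $Y(t)\approx Y_c(1-9.86\,\lambda\,t^{\nu_2})$, so that $\lambda<0$ lands in $\mathcal{D}_2$ with $\dot Y>0$. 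Without $Q$ and its boundary estimate, the ``trapping'' step is an unsupported assertion; and without the sign computation of the eigenvector, you have not matched the sign of $\lambda$ to the two branches.

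A more minor point: for the CS asymptotics the paper does not literally invoke the unstable-manifold theorem in the projectivised system and then lift; it applies Theorem~\ref{singular IVP for ends} (a singular IVP / Picard argument) directly to the four-dimensional $(a,b,c,f)$-system after substituting the cone as background (Proposition~\ref{CS and AC ends}). This yields a convergent generalised power series with leading exponent $\nu_2>0$, which immediately gives the rate-$\nu_2$ decay with all covariant derivatives required by Definition~\ref{def-Spin(7)-cone}. Your route (3D unstable manifold + lift) can be made to work, but you would still have to convert the $s$-rate $e^{\nu_2 s}$ into a $t$-rate and control derivatives, which is exactly what the Picard argument packages for you; also note that for $\lambda>0$ the cleaner argument is the paper's: if the orbit were forward complete it would converge to a fixed point with $Y_\infty<Y_c$, but the only candidates $(0,0,0)$ and $(1,0,0)$ are a source resp.\ reached only along the excluded flow line \eqref{X-axis}, a contradiction — this sidesteps the question of whether $Y$ hits zero at finite or infinite $s$.
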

As the parameters $\mu$ and $\tau$ approach the AC limits $\mu_{\mathrm{ac}}$ and $\tau_{\mathrm{ac}}$, respectively, 
the corresponding flow lines representing ALC spaces converge to 
the union two flow lines, one the respective AC space, and the other the conically singular ALC space. This has a geometric interpretation:
In the 1-parameter families $\Psi_{\mu}$ and $\Upsilon_{\tau}$
we have chosen a scale by fixing the size of the singular orbit. In this normalisation, as the parameter increases, the asymptotic circle length of the complete ALC solutions increases until it approaches infinity at the AC limit. If, however, we rescale to keep the asymptotic circle length fixed, the size of the singular orbit shrinks as the parameter increases and approaches zero at the limit. Therefore, this limit is a conically singular ALC space with the AC space bubbling off.

There is a further fixed point on one of the vertices of the cube $\mathcal{W}$, which we have not talked about so far. Unlike the fixed points corresponding to the singular orbits $S^5$ and $\CP{2}$, which both have 2-dimensional unstable manifolds, this fixed point is a source. We prove that among the 2-dimensional space of flow lines emanating from this fixed point we can find 1-parameter families which behave as $\Psi_{\mu}$ and $\Upsilon_{\tau}$. 

\begin{thmx}
\label{thm-sing}
There exists a positive constant $z_0$ such that for each $z\in(0,z_0)$ there exists a 1-parameter family $\Omega^{z}_{\kappa}, \kappa\in(0,1)$, of $\SU(3)\times\U(1)$-invariant Spin(7) holonomy metrics on $(0,\epsilon(z,\kappa))_t\times N(1,-1)$, with $\epsilon(z,\kappa) > 0$ for all $\kappa\in(0,1)$,
with the following properties: There exists $\kappa_{\mathrm{ac}}(z)\in(0,1)$ such that
\begin{itemize}
\item If $\kappa< \kappa_{\mathrm{ac}}(z)$, $\Omega^z_{\kappa}$ extends to $(0,\infty)\times N(1,-1)$, is forward complete and asymptotically locally conical (ALC) as $t\rightarrow\infty$.
\item
If $\kappa = \kappa_{\mathrm{ac}}(z)$,
$\Omega^z_{\kappa}$ extends to $(0,\infty)\times N(1,-1)$, is forward complete and asymptotically conical  (AC) as $t\rightarrow\infty$.
\item 
If $\kappa > \kappa_{\mathrm{ac}}(z)$, then $\Omega^z_{\kappa}$ is forward incomplete.
\end{itemize}
All Spin(7) holonomy metrics $\Omega^z_{\kappa}$ have a singularity as $t\rightarrow 0$.
\end{thmx}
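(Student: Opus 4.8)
The plan is to run the dynamical-systems machinery developed for Theorems~\ref{theorem1} and~\ref{thmB}, but with the flow lines now emanating from the source fixed point $P$ on a vertex of the cube $\mathcal{W}$ rather than from the fixed points that encode the smooth singular orbits $S^5$ and $\CP{2}$. First I would linearise the $(X,Y,Z)$-system at $P$ and record its eigenvalues and eigendirections. Since $P$ is a source, every trajectory in a punctured neighbourhood converges to $P$ in backward flow time, so there is automatically a $2$-dimensional family of flow lines leaving $P$ into the interior of $\mathcal{W}$; I would use the linearisation — and, where resonances occur, a Picard-iteration construction of the solutions with prescribed leading behaviour — to put coordinates $(z,\kappa)$ on a relatively open piece of this family, normalised so that the geometric radial coordinate $t$ tends to $0$ as the trajectory tends to $P$. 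Translating the leading asymptotics at $P$ back into $(a,b,c,f)$, hence into the metric: because $P$ is neither one of the ``smooth cap'' fixed points nor the fixed point corresponding to the Spin(7)-cone over $N(1,-1)$, this leading behaviour is a genuinely singular model, which establishes the final assertion of the theorem. It is convenient to choose $(z,\kappa)$ so that the stratum $\{z=0\}$ lies in $\partial\mathcal{W}$ and contains in its closure the $S^5$- and $\CP{2}$-fixed points; this is the link to Theorems~\ref{theorem1} and~\ref{thmB}.

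Second, fix a small $z>0$ and analyse the forward flow of the trajectory $\gamma_{z,\kappa}$ for $\kappa\in(0,1)$. By the structural facts already used for the earlier families — the $(X,Y,Z)$-system has no intrinsic singularities inside $\mathcal{W}$, a forward-complete flow line contained in a compact subset of $\overline{\mathcal{W}}$ must converge to a fixed point, and the only fixed points in $\overline{\mathcal{W}}$ that are $\omega$-limits of interior trajectories are the ALC sink and the Spin(7)-cone fixed point — each $\gamma_{z,\kappa}$ does exactly one of the following: it converges to the ALC end (the metric is forward complete and ALC); it converges to the cone fixed point (forward complete and AC); or it exits $\mathcal{W}$ through the face $Y=0$ in finite flow time, where the Spin(7)-structure degenerates, so the metric is forward incomplete. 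As in the proof of Theorems~\ref{theorem1} and~\ref{thmB}, the set of $\kappa$ giving ALC solutions is open because the ALC end is a hyperbolic sink, and the set giving incomplete metrics is open because $\gamma_{z,\kappa}$ crosses $\{Y=0\}$ transversally; the same comparison/monotonicity argument then shows these two sets are intervals $(0,\kappa_{\mathrm{ac}}(z))$ and $(\kappa_{\mathrm{ac}}(z),1)$ whose common boundary point $\kappa_{\mathrm{ac}}(z)$ is the unique AC parameter — provided both sets are non-empty.

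Third, non-emptiness of the ALC and the incomplete sets follows from a comparison argument in the limit $z\to 0$. As $z\to 0$ the trajectory $\gamma_{z,\kappa}$ converges, uniformly on compact $t$-intervals, to a broken trajectory: a short segment from $P$ to the $S^5$- (resp.\ $\CP{2}$-) fixed point inside $\{z=0\}$, followed by the trajectory of $\Psi_{\mu}$ (resp.\ $\Upsilon_{\tau}$) for the value of $\mu$ (resp.\ $\tau$) selected by $\kappa$. Theorems~\ref{theorem1} and~\ref{thmB} give, on either side of the AC parameter, a non-empty open set of parameters producing ALC solutions and a non-empty open set producing incomplete ones; combining this with continuous dependence of the flow on $z$ and with the openness of the ALC/incomplete conditions established above, one finds, for all sufficiently small $z$, values of $\kappa$ near the two endpoints of $(0,1)$ for which $\gamma_{z,\kappa}$ is ALC, respectively incomplete. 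This is exactly what pins down the threshold $z_0$. Together with the second step this proves the trichotomy, and the first step supplies the $t\to 0$ singularity of every $\Omega^z_\kappa$.

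The step I expect to be the main obstacle is making the $z\to 0$ limit quantitative. The limiting broken trajectory passes through an \emph{intermediate} fixed point — the $S^5$- or $\CP{2}$-vertex — before it starts to shadow a $\Psi_{\mu}$ or $\Upsilon_{\tau}$ trajectory, and the genuine difficulty is to control $\gamma_{z,\kappa}$ uniformly in $z$ as it transits a fixed neighbourhood of this vertex. This requires a $\lambda$-lemma / exchange-of-stability estimate relating the eigenvalues of the linearisations at $P$ and at the intermediate vertex, together with a check that the ordering of these eigenvalues and any resonances between them are compatible with the shadowing, and that the open ALC and incomplete parameter intervals of $\Psi_{\mu}$ and $\Upsilon_{\tau}$ are not destroyed by the perturbation. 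A secondary, more routine point is to verify from the vertex structure of $\overline{\mathcal{W}}$ — already available from the analysis behind Theorems~\ref{theorem1}--\ref{thmB} — that $P$ really is joined inside $\partial\mathcal{W}$ to both the $S^5$- and the $\CP{2}$-fixed points, so that the construction can be carried out on either side.
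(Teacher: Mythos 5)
Your overall framework matches the paper's: trajectories emanating from the source $(0,0,0)$, the trichotomy (exit $\mathcal{W}$ at $Y=0$ / converge to the ALC sink / converge to the cone fixed point), openness of the first two sets, and the comparison argument of Lemma \ref{comparison of solutions} to show they are intervals. The genuine gap is in your non-emptiness step. You propose to deduce non-emptiness by sending $z\to 0$ along a parametrization in which $z=0$ corresponds to flow lines running into the intermediate hyperbolic vertices $(0,1,0)$ and $(1,0,0)$, and then invoking Theorems \ref{theorem1} and \ref{thmB} for the broken limiting trajectories. As you say yourself, this requires a shadowing/$\lambda$-lemma controlling how $\gamma_{z,\kappa}$ transits a fixed neighbourhood of the intermediate vertex, and this is left unproved. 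The difficulty is real: as $z\to 0$ the time spent near the intermediate vertex diverges, so convergence of $\gamma_{z,\kappa}$ to the broken trajectory ``uniformly on compact $t$-intervals'' does not by itself transfer the forward asymptotic type from the $\Psi_\mu$ or $\Upsilon_\tau$ trajectory to $\gamma_{z,\kappa}$, and the correspondence between $\kappa$ and the shadowed $\mu$ (or $\tau$) becomes singular in the limit.

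The paper sidesteps this entirely by an explicit choice of family whose $\kappa$-endpoints need no shadowing at all. For small fixed $\epsilon>0$ and $z\in(0,z_0)$ one takes the flow lines of \eqref{ODE system in good coord} through the line segment $(1-\kappa)(\epsilon,\epsilon,0)+\kappa(\epsilon,0,z)$, $\kappa\in(0,1)$, with $\epsilon,z_0$ small enough that all these flow lines originate at $(0,0,0)$. As $\kappa\to 0$ the limiting flow line is the explicit diagonal solution \eqref{diagonal} in the face $Z=0$, which converges directly to the ALC sink $(1,1,0)$; openness of the sink's basin then yields ALC solutions for small $\kappa>0$. The $\kappa\to 1$ endpoint $(\epsilon,0,z)$ has $Y=0$ and $\dot Y=-2z<0$ there, so by continuity the trajectories for $\kappa$ near $1$ exit $\mathcal{W}$ through $\{Y=0\}$. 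The two endpoints of the interval thus produce ALC and incomplete behaviour \emph{directly}, with no passage near an intermediate fixed point, and the argument for Theorem \ref{theorem1} transfers verbatim. Two smaller remarks: the paper's parametrization also makes the collapsed $\kappa\to 0$ limit geometrically transparent (the singular Bryant--Salamon $\mathrm{G}_2$ metric, case $C=1$ of Remark \ref{rem-reduced system}), whereas in your parametrization that limit is obscured; and the paper establishes the $t\to 0$ singularity of $\Omega^z_\kappa$ not just from the fact that $(0,0,0)$ is not a smooth-cap fixed point, but by invoking Reidegeld's classification \cite[Theorem 5.4.6]{reidegeld} that the only complete invariant Spin(7) metrics with this principal orbit are $\Psi_\mu$ and $\Upsilon_\tau$, which is the fully rigorous version of your heuristic.
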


Again each of the families $\Omega^z_{\kappa}$ has a collapsed limit. 
As $\kappa\rightarrow 0$ the trajectories corresponding to $\Omega^z_{\kappa}$ converge to the diagonal flow line in the face $Z=0$. 
This explicit solution is a singular version of the Bryant--Salamon $\mathrm{G}_2$ holonomy metric on $\Lambda^2_{-}\CP{2}$. As $t\rightarrow 0$ one of the metric coefficient functions blows-up and the others converge to 0.

\begin{figure}
\centering
\includegraphics[scale=0.6]{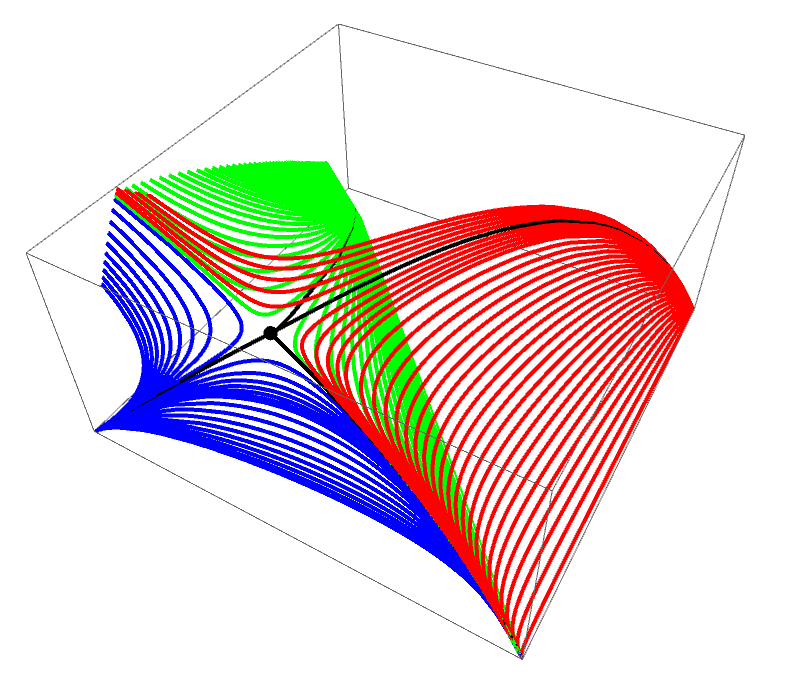}
\caption{This 3-dimensional, numerical plot summarises our work and illustrates the various geometric transitions and limiting phenomena. It depicts the families $\Psi_{\mu}$ (red), $\Upsilon_{\tau}$ (blue) and $\Omega^{0.15}_{\kappa}$ (green).
The flow lines of the three AC spaces and the CS ALC space are coloured black.}
\label{dynamical-system-pic}
\end{figure}

\paragraph*{Acknowledgements} This work is a result of the author's PhD thesis and 
was supported by the Engineering and Physical Sciences Research Council [EP/L015234/1], 
the EPSRC Centre for Doctoral Training in Geometry and Number Theory (The London School of Geometry and Number Theory), University College London.
I want to thank my PhD supervisors Mark Haskins, Jason Lotay and Lorenzo Foscolo for their support, and my PhD examiners Johannes Nordstr\"om and Simon Salamon for their helpful comments. In particular, I want to thank Simon Salamon for generously sharing with me the Mathematica code which I have used to create figures  \ref{dynamical-system-pic} and \ref{2Dplot}.

\section{Preliminaries}

\label{section-pre}

\subsection{Special geometric structures in dimensions 6, 7 and 8}

In this section we provide the essential background on manifolds equipped with special geometric structures. For more details we refer to \cite{small-joyce} and \cite{Salamon-book}.
Even though the main focus of this paper is on Spin(7)-structures, we will also need the structure groups $\SU(3)$ and $\mathrm{G}_2$.

\begin{definition}
An $\SU(3)$-structure on a 6-dimensional manifold $M$ is a pair $(\omega,\Omega)$ of a non-degenerate 2-form $\omega$ and a complex volume form $\Omega$ satisfying the algebraic constraints
\begin{align*}
\omega\wedge \Omega = 0, \quad
\frac{1}{6}\omega^3 = \frac{1}{4} \mathrm{Re}\ \Omega\wedge \mathrm{Im}\ \Omega.
\end{align*}
Equivalently, at each point $p\in M$ there exists a linear isomorphism between $T_p M$ and $\mathbb{C}^3$ which identifies 
$\omega|_p$ and $\Omega|_p$ with
\begin{align*}
\omega_0
=
\frac{i}{2}
(dz_1\wedge d\bar{z}_1+dz_2\wedge d\bar{z}_2 + dz_3\wedge d\bar{z}_3), 
\quad 
\Omega_0
=
dz_1\wedge dz_2 \wedge dz_3.
\end{align*} 
The pair $(\omega,\Omega)$ reduces the structure group of the frame bundle of $M$ to $\SU(3)$ by considering the subbundle 
$\{u\colon \mathbb{C}^3 \xrightarrow{\sim} T_pM\ \text{linear}\ |\
u^*(\omega|_p,\Omega|_p)=(\omega_0,\Omega_0) \}$.
\end{definition}

\begin{definition}
$\mathrm{G}_2$ is the subgroup of $\GL(7,\mathbb{R})$ which preserves the 3-form
\begin{align*}
\varphi_0
=
dx_{123}+dx_{145}+dx_{167}+dx_{246}-dx_{257}-dx_{347}-dx_{356}.
\end{align*}
Here $(x_1,\dots,x_7)$ are coordinates on $\mathbb{R}^7$
and  we denote $dx_i\wedge dx_j\cdots \wedge dx_l$ by $dx_{ij\dots l}$. Now let $M$ be an oriented 7-manifold. We say that a 3-form $\varphi$ is \textit{positive} if at each point $p\in M$ there exists an orientation-preserving linear isomorphism between $T_pM$ and $\mathbb{R}^7$ such that $\varphi|_p$ is identified with $\varphi_0$. We refer to $\varphi$ as a \textit{$\text{G}_2$-structure}. $\varphi$ reduces the structure group of the frame bundle of $M$ to $\mathrm{G}_2$ by considering the subbundle 
$\{u\colon \mathbb{R}^7 \xrightarrow{\sim} T_pM\ \text{linear}\ |\
u^*\varphi_p = \varphi_0 \}$. $\mathrm{G}_2$ is a subgroup of $\SO(7)$. In particular, $\varphi$ induces in a purely algebraic way a Riemannian metric $g$. 
The Hodge star operator induced by $g$ gives the Hodge dual 4-form $*\varphi$. The condition that the holonomy group of $g$ is contained in $\mathrm{G}_2$ is equivalent to $d\varphi = 0$ and $d*\varphi=0$. We call such a $\mathrm{G}_2$-structure \textit{torsion-free}. 
This is a non-linear condition as the Hodge star operator depends in a non-linear way on $\varphi$.
\end{definition}

\begin{definition}
Similar to $\mathrm{G}_2$ the spin group $\Spin(7)$ can be characterised as the stabiliser of the 4-form 
\begin{align*}
\psi_0
=&
dx_{1234}+dx_{1256}+dx_{1278}+dx_{1357}-dx_{1368}
-dx_{1458}-dx_{1467}
\\
&-dx_{2358}-dx_{2367}-dx_{2457}
+dx_{2468}+dx_{3456}+dx_{3478}+dx_{5678}
\end{align*}
where $(x_1,\dots,x_8)$ are coordinates on $\mathbb{R}^8$.
Now let $M$ be an oriented 8-manifold. We say that a 4-form $\psi$ is \textit{admissible} if at each point $p\in M$ there is an orientation-preserving isomorphism between $T_pM$ and $\mathbb{R}^8$ which identifies $\psi|_p$ with $\psi_0$. 
We refer to $\psi$ as a \textit{Spin(7)-structure}. $\psi$ reduces the structure group of the frame bundle of $M$ to Spin(7) by considering the subbundle 
$\{u\colon \mathbb{R}^8 \xrightarrow{\sim} T_p M\ \text{linear}\ |\
u^*\psi_p = \psi_0 \}$. Spin(7) is a subgroup of $\SO(8)$. In particular, $\psi$ induces in a purely algebraic way a Riemannian metric $g$. 
With respect to this metric the 4-form $\psi$ is self-dual. The condition that the holonomy group of the induced metric is contained in $\Spin(7)$ is equivalent to $d\psi=0$.
We call such a $\Spin(7)$-structure \textit{torsion-free}.
\end{definition}

While the orbit of $\varphi_0$ under the action of $\GL(7,\mathbb{R})$ is an open subset of $\Lambda^3(\mathbb{R}^7)^*$, the orbit of $\psi_0$ under the action of $\GL(8,\mathbb{R})$ is a nonlinear subspace of $\Lambda^4(\mathbb{R}^8)^*$ of codimension 27. It follows that the space of admissible forms is non-linear and the condition $d\psi = 0$ is a non-linear system of PDEs. 

\begin{remark}
For our purposes an important fact is that there is a chain of inclusions $\SU(3) \subset \mathrm{G}_2 \subset \mathrm{Spin}(7)$.
This can be seen as follows. If $(M,\Omega,\omega,h)$ is a 6-dimensional manifold equipped with an $\SU(3)$-structure, then we obtain a $\mathrm{G}_2$-structure on $M\times \mathbb{R}$ by
\begin{align}
\varphi = dt\wedge\omega + \mathrm{Re}\ \Omega,
\quad
*\varphi = \frac{1}{2}\omega^2-dt\wedge\mathrm{Im}\ \Omega,
\quad
g = dt^2 + h.
\label{G2-reduction-SU(3)}
\end{align} 
If $(M,\varphi,*\varphi,h)$ is a 7-dimensional manifold equipped with a $\mathrm{G}_2$-structure, then we obtain a Spin(7)-structure
on $M\times \mathbb{R}$ by
\begin{align}
\psi = dt\wedge \varphi + *\varphi,
\quad
g = dt^2 + h.
\label{Spin(7)-reduction-G2}
\end{align} 
\end{remark}

\subsection{Cohomogeneity one Spin(7)-manifolds}
\label{intro-coh1}

References for our brief introduction to cohomogeinity one manifolds are  \cite{Mostert,reidegeld,Reidegeld-paper}.
Let $G$ be a compact Lie group acting continuously on the connected manifold $M$. We say this action is of \textit{cohomogeneity one} 
if there exists an orbit with codimension 1.
In this case the quotient $M/G$ has to be diffeomorphic to either $S^1$,  $[0,1]$, $\mathbb{R}$ or $[0,\infty)$.
In the first two cases $M$ is compact. However, by a Bochner-type argument compact irreducible Ricci-flat manifolds cannot have any continuous symmetries. In the third case $M$ has two ends. However, by the Cheeger--Gromoll splitting theorem complete irreducible  Ricci-flat manifolds can have only one end. Therefore, in the context of complete cohomogeneity one manifolds with holonomy Spin(7) only the last case  is interesting and from now on
we only consider $M/G=[0,\infty)$. Denote by $q: M\rightarrow M/G$ the quotient map. Isotropy groups of orbits which $q$ does not map to the end point of the half-open interval $[0,\infty)$ are conjugate to one another and there exists $H\subset G$ such that $q^{-1}(0,\infty)$ is $G$-equivariantly diffeomorphic to $(0,\infty)\times G/H$. These orbits are called \textit{principal orbits}. The orbit $q^{-1}(0)$ is called the \textit{singular orbit}. Denote its isotropy group by $K$, i.e. $q^{-1}(0)=G/K$.
This allows us to write
\begin{align}
\label{decomposition-M}
M = (G/K) \cup (0,\infty)\times (G/H).
\end{align}
We can say more about the structure of $M$.
Note that $G \rightarrow G/K$ is a principal $K$-bundle. 
We can choose $H\subset K$ such that $K/H$ is diffeomorphic to a sphere. 
In fact there exists a representation $V$ of $K$ such that 
$M$ has the structure 
of the total space of the associated vector bundle $G \times_K V \rightarrow G/K$ over the singular orbit, the principal orbits $\{t\}\times G/H$ are sphere bundles over $G/K$ which foliate the vector bundle outside the zero section and the spherical fibres of the fibrations $\{t\} \times G/H \rightarrow G/K$ are isomorphic to $K/H$. 
 
We say that a Spin(7)-manifold $(M,\psi)$ is a \textit{cohomogeneity one Spin(7)-manifold} if there exists a cohomogeneity one action by some compact Lie group $G$ on $M$ such that $\psi$ is $G$-invariant. Then $G$ also preserves the induced metric.
The Spin(7)-structure $\psi$ induces on each principal orbit $\{t\}\times G/H$ a $G$-invariant $\mathrm{G}_2$-structure $(\varphi_t,h_t)$ and on $q^{-1}(0,\infty)$ the Spin(7)-structure can be recovered as 
\begin{align}
\psi &= dt\wedge \varphi_t + *\varphi_t,
\label{Spin(7)-structure from G_2 evo}
\\
g &= dt^2+h_t.
\end{align}
Here the Hodge star depends on $\varphi_t$.
The condition $d\psi=0$ for $\psi$ to be torsion-free then is equivalent to the system
\begin{subequations}
\label{tf condition}
\begin{align}
d_{\ig G/H}\! *\! \varphi_t& = 0,
\label{tf condition-static eqn}
\\
\frac{\partial}{\partial t}\!*\!\varphi_t &= d_{\ig G/H}\varphi_t.
\label{tf condition-evo eqn}
\end{align}
\end{subequations}
Here $d_{\ig G/H}$ denotes the exterior derivative on $G/H$.
The first equation is a static condition, i.e. it does not involve a derivative with respect to the parameter $t$. Therefore, we can interpret a torsion-free Spin(7)-structure on the dense subset $M-q^{-1}(0)$ as a solution of the evolution equation 
\begin{align}
\label{fundamental evo eqn}
\frac{\partial}{\partial t}\!*\!\varphi_t &= d_{\ig G/H}\varphi_t
\end{align}
in the space of co-closed, $G$-invariant $\mathrm{G}_2$-structures on the homogeneous space $G/H$. Note that that this space is finite dimensional.

How can we approach the problem of constructing a complete $G$-invariant torsion-free Spin(7)-structure on $M$? Fixing a co-closed $G$-invariant $\mathrm{G}_2$-structure $\hat{\varphi}$ on a principal orbit $\{t_0\}\times G/H$ leads to a well-defined initial value problem. By the Picard--Lindel\"of theorem there exists a torsion-free Spin(7)-structure on $(t_0-\varepsilon,t_0+\varepsilon)\times G/H$ of the form (\ref{Spin(7)-structure from G_2 evo}) with $\varphi_0=\hat{\varphi}$ for some $\varepsilon > 0$. To investigate whether this Spin(7)-structure can be extended to a complete torsion-free Spin(7)-structure, two questions have to be addressed. First, does it extend backward and close smoothly on the singular orbit? Secondly, does it extend forward over the non-compact end? In general neither question is easy to answer. In the context of non-compact cohomogeneity one Einstein metrics Eschenburg--Wang \cite{eschenburg-wang} take a different approach. They instead consider a singular initial value problem on the singular orbit.
Smooth solutions give rise to smooth Einstein metrics in a neighbourhood of the singular orbit. To investigate completeness it remains to check whether the solution extends over the non-compact end. This has become the standard approach in the construction of cohomogeneity one structures, e.g. \cite{FHN2}. 
Also see the more recent treatment by Verdiani--Ziller \cite{Verdiani-Ziller} on the problem of extending cohomogeneity one metrics smoothly over the singular orbit.
In the realm of special holonomy a simplifying assumption made by Eschenburg--Wang is often not satisfied and their approach has to be adjusted accordingly.
In particular, Reidegeld \cite{reidegeld,Reidegeld-paper} studied this singular initial value problem in the context of Spin(7)-structures.

\subsection{Asymptotic geometries}

If $(\Sigma,h)$ is a closed Riemannian manifold 
then the \textit{Riemannian cone} $(C(\Sigma),g_C)$
is the manifold $C(\Sigma)=(0,\infty)\times \Sigma$
equipped with the metric $g_C = dt^2+t^2 h$.
Here $t$ is the radial coordinate on $(0,\infty)$.
If in particular $\Sigma$ is 6-dimensional and the metric $h$ is given by 
an $\SU(3)$-structure $(\Sigma,\Omega,\omega,h)$
then we can equip $C(\Sigma)$ with a 
$\mathrm{G}_2$-structure 
given by
\begin{align*}
\varphi_C
=
t^2 dr\wedge\omega+ t^3 \mathrm{Re}\ \Omega,
\quad
*\varphi_C
=
\frac{1}{2} t^4 \omega^2 - t^3 dt\wedge \mathrm{Im}\ \Omega,  
\end{align*}
which induces the cone metric $g_C$. 
$(C(\Sigma),\varphi_C,g_C)$ is said to be a $G_2$\textit{-cone}
if the $\mathrm{G}_2$-structure is torsion-free. 
The exterior derivatives are
\begin{align*}
d\varphi_C
&=
-t^2 dt\wedge d\omega +3t^2 dt\wedge \mathrm{Re}\ \Omega
+t^3 d\, \mathrm{Re}\ \Omega,
\\
d*\varphi_C
&=
2t^3 dt\wedge\omega^2
+
t^4 d\omega\wedge\omega
+
t^3 dt\wedge d\, \mathrm{Im}\ \Omega.
\end{align*} 
Hence the condition $d\varphi=d*\varphi=0$ is equivalent to
\begin{align}
d\omega=3\mathrm{Re}\ \Omega,
\quad
d\, \mathrm{Im}\ \Omega= -2 \omega^2.
\label{NK condition}
\end{align}
This means precisely that the $\SU(3)$-structure 
on $\Sigma$ is \textit{nearly K\"ahler}. 
Because nearly K\"ahler manifolds are Einstein manifolds with 
positive scalar curvature, $\Sigma$ has to be compact by the
Bonnet--Myers theorem.

Analogously if $\Sigma$ is 7-dimensional and the metric $h$
is induced by the $\mathrm{G}_2$-structure $(\Sigma,\varphi,h)$,
then we can equip $C(\Sigma)$
with a $\Spin(7)$-structure given by
\begin{align*}
\psi_C
=
t^3 dt\wedge\varphi + t^4 *\!\varphi,
\end{align*}
which induces the cone metric $g_C$. 
$(C(\Sigma),\psi_C)$ is said to be a $\Spin(7)$-cone
if $\psi_C$ is torsion-free. 
The exterior derivative is given by
\begin{align*}
d\psi_C
=
-t^3 dt\wedge d\varphi
+
4 t^3 dt\wedge *\varphi 
+
t^4 d\!*\!\varphi.
\end{align*}
Hence the condition $d\psi_C=0$ is equivalent
to
\begin{align*}
d\varphi = 4 *\!\varphi.
\end{align*}
This means that the $\mathrm{G}_2$-structure
on $\Sigma$ is \textit{nearly parallel}.
Again nearly parallel $\mathrm{G}_2$-manifolds
are Einstein manifolds with positive scalar curvature so they have to be compact.
If the link is the 7-sphere with the round metric,
then the cone is the Euclidean $\mathbb{R}^8$ with the standard Spin(7)-structure. Apart from its quotients this is the only Spin(7)-cone with trivial holonomy. All other Spin(7)-cones need to have  holonomy group $\Sp(2)$, $\SU(4)$ or Spin(7). 
If the holonomy equals Sp(2) the link is a \textit{3-Sasakian}
manifold, if it equals SU(4) then the link must be a 7-dimensional \textit{Sasaki--Einstein} manifold
and in the last case we say the nearly parallel 
$\mathrm{G}_2$-structure is \textit{proper}.

We can now give a precise definitions of CS, AC and ALC Spin(7)-manifolds.

\begin{definition}
\label{def-Spin(7)-cone}
Let $(C(\Sigma),\psi_C, g_C)$ be a $\Spin(7)$-cone over the 
nearly parallel $\mathrm{G}_2$-manifold $(\Sigma,\varphi,h)$. A $\Spin(7)$-manifold $(M,\psi,g)$ has an isolated conical singularity 
asymptotic to the cone $(C(\Sigma),\psi_C, g_C)$ with rate $\nu\in(0,\infty)$ if there exists an open subset $U \subset M$
and a diffeomorphism 
\begin{align*}
F: (0,\varepsilon)\times \Sigma \subset C(\Sigma) \rightarrow U
\end{align*}
for some $\varepsilon > 0$ such that 
\begin{align*}
|\nabla_C^j(F^*\psi-\psi_C)|_{g_C}=\mathcal{O}(t^{\nu-j})
\quad
\text{for all}\ j\in\mathbb{N}_0\ \text{as}\ t\rightarrow 0.
\end{align*}
In particular this implies
\begin{align*}
|\nabla_C^j(F^*g-g_C)|_{g_C}=\mathcal{O}(t^{\nu-j})
\quad
\text{for all}\ j\in\mathbb{N}_0\ \text{as}\ t\rightarrow 0.
\end{align*}
We say that $(M,\psi,g)$ is a \textit{conically singular} (CS) Spin(7)-manifold.
\end{definition}

\begin{definition}
\label{def-AC}
Let $(C(\Sigma),\psi_C, g_C)$ be a $\Spin(7)$-cone over the 
nearly parallel $\mathrm{G}_2$-manifold $(\Sigma,\varphi,h)$. A $\Spin(7)$-manifold $(M,\psi,g)$ is an \textit{asymptotically conical} (AC) $\Spin(7)$-manifold asymptotic to $(C(\Sigma),\psi_C)$
with rate $\nu\in(-\infty,0)$ if there exist a compact subset
$K\subset M$ and a diffeomorphism 
\begin{align*}
F: (R,\infty)\times \Sigma \subset C(\Sigma) \rightarrow M-K
\end{align*}
for some $R>0$ such that 
\begin{align*}
|\nabla_C^j(F^*\psi-\psi_C)|_{g_C}=\mathcal{O}(t^{\nu-j})
\quad
\text{for all}\ j\in\mathbb{N}_0\ \text{as}\ t\rightarrow \infty.
\end{align*}
In particular this implies
\begin{align*}
|\nabla_C^j(F^*g-g_C)|_{g_C}=\mathcal{O}(t^{\nu-j})
\quad
\text{for all}\ j\in\mathbb{N}_0\ \text{as}\ t\rightarrow \infty.
\end{align*}
\end{definition}

\begin{definition}
Let $(C(\Sigma),\varphi_C,g_C)$ be a $\mathrm{G}_2$-cone over the nearly K\"ahler manifold $(\Sigma,\Omega,\omega,h)$,
$\ell$ a positive constant
and $p:P\rightarrow C(\Sigma)$ a $\U(1)$-principal bundle
with a connection $\theta\in\Omega^1(P)$ which gives rise
to a $\Spin(7)$-structure on $P$ via
$\psi_P=\ell \theta\wedge\varphi_C +*\varphi_C$
with associated metric $g_P=g_C+\ell^2\theta^2$. 
A $\Spin(7)$-manifold $(M,\psi,g)$ is said to be an \textit{asymptotically locally conical} (ALC)
$\Spin(7)$-manifold asymptotic to $(P,\psi_P,g_P)$
with rate $\nu \in (-\infty,0)$ and asymptotic circle length
$\ell$
if there exists a compact subset $K\subset M$ and (possibly for a double cover of $M-K$) a 
diffeomorphism
\begin{align*}
F: p^{-1}((R,\infty)\times \Sigma)\subset P \rightarrow M-K
\end{align*}
for some $R > 0$ such that 
\begin{align*}
|\nabla_P^j(F^*\psi-\psi_P)|_{g_P}
=
\mathcal{O}(t^{\nu-j})\quad
\text{for all}\ j\in\mathbb{N}_0\ \text{as}\ t\rightarrow \infty.
\end{align*}
In particular this implies
\begin{align*}
|\nabla_P^j(F^*g-g_P)|_{g_P}=\mathcal{O}(t^{\nu-j})
\quad
\text{for all}\ j\in\mathbb{N}_0\ \text{as}\ t\rightarrow \infty.
\end{align*}
\end{definition}

\section{Spin(7)-metrics with Principal Orbit $N(1,-1)$}
\label{section-principal orbit}

\subsection{The Aloff--Wallach space $N(1,-1)$}

For every pair $(k,l)$ of integers which are not both zero, 
$\U(1)$ can be embedded in the maximal torus of diagonal matrices in $\SU(3)$ as 
\begin{align}
e^{i\theta}\mapsto
\begin{pmatrix}
e^{i k \theta} & 0 & 0\\
0 & e^{i l \theta} & 0\\
0 & 0 & e^{-i(k+l)\theta}
\end{pmatrix}.
\label{def-U_k,l}
\end{align}
We also denote this subgroup of $\SU(3)$ by $\U(1)_{k,l}$.
The Aloff--Wallach space $N(k,l)$ is the homogeneous space $\SU(3)/\U(1)_{k,l}$. 
We work with the following basis of $\mathfrak{su}(3)$:
\begin{equation*}
\begin{array}{r@{\hspace{3pt}}c@{\hspace{3pt}}l@{\hspace{20pt}}r@{\hspace{3pt}}c@{\hspace{3pt}}l}
E_1 & = &
\begin{pmatrix*}[r]
0 & 1 & 0 \\
-1 & 0 & 0 \\
0 & 0 & 0
\end{pmatrix*}, &
E_2 & = &
\begin{pmatrix*}[r]
0 & i & 0 \\
i & 0 & 0 \\
0 & 0 & 0
\end{pmatrix*},
\vspace{5pt}\\
E_3 & =
& \begin{pmatrix*}[r]
0 & 0 & 1 \\
0 & 0 & 0 \\
-1 & 0 & 0
\end{pmatrix*},
& 
E_4 & =
& \begin{pmatrix*}[r]
0 & 0 & i \\
0 & 0 & 0 \\
i & 0 & 0
\end{pmatrix*},
\vspace{5pt}\\
E_5 &=
& \begin{pmatrix*}[r]
0 & 0 & 0 \\
0 & 0 & 1 \\
0 & -1 & 0
\end{pmatrix*},
&
E_6 &=
& \begin{pmatrix*}[r]
0 & 0 & 0 \\
0 & 0 & i \\
0 & i & 0
\end{pmatrix*},
\vspace{5pt}\\
E_7 &=
& \begin{pmatrix*}[r]
-i/2 & 0 & 0 \\
0 & -i/2 & 0 \\
0 & 0 & i
\end{pmatrix*},
& 
E_8 & =
& \begin{pmatrix*}[r]
i & 0 & 0 \\
0 & -i & 0 \\
0 & 0 & 0
\end{pmatrix*}
.
\end{array}
\end{equation*}

We denote the dual basis of $E_1, \dots, E_8$ by $e_1, \dots, e_8$. 
The structure constants are
\\[10pt]
\begin{center}
$
{\renewcommand{\arraystretch}{2}
\begin{tabu}{r | r | r | r | r | r | r | r | r}
[ \cdot, \cdot ] & E_1 & E_2 & E_3 & E_4 & E_5 & E_6 & E_7 & E_8
\\ \hline
E_1 & 0 & 2 E_8 & -E_5 & -E_6 & E_3 & E_4 & 0 & -2E_2
\\ \hline
E_2 & -2 E_8 & 0 & E_6 & -E_5 & E_4 & -E_3 & 0 & 2E_1
\\ \hline
E_3 & E_5 & -E_6 & 0 & E_8-2E_7 & -E_1 & E_2 & \frac{3}{2}E_4 & -E_4
\\ \hline
E_4 & E_6 & E_5 & -E_8+2 E_7 & 0 & -E_2 & -E_1 & -\frac{3}{2} E_3 & E_3
\\ \hline
E_5 & -E_3 & -E_4 & E_1 & E_2 & 0 & -2 E_7 - E_8 & \frac{3}{2}E_6 & E_6
\\ \hline
E_6 & -E_4 & E_3 & -E_2 & E_1 & 2E_7 + E_8 & 0 & -\frac{3}{2}E_5 & - E_5
\\ \hline
E_7 & 0 & 0 & -\frac{3}{2}E_4 & \frac{3}{2} E_3 & -\frac{3}{2} E_6 & \frac{3}{2}E_5 & 0 & 0
\\ \hline
E_8 & 2E_2 & -2E_1 & E_4 & -E_3 & -E_6 & E_5 & 0 & 0
\end{tabu}
}
$
\end{center}

\begin{remark}
\label{equiv-classes-AW}
We now discuss various relations between the Aloff--Wallach spaces $N(k,l)$ for different pairs of integers. First, the subgroups $\U(1)_{k,l}$ and $\U(1)_{ak,al}$ coincide
and hence we can assume without loss of generality that the pair $(k,l)$ is coprime. Secondly, complex conjugation on $\SU(3)$ generates a group of outer automorphisms isomorphic to $\mathbb{Z}_2$ and maps $N(k,l)$ to $N(-k,-l)$. Finally, homogeneous spaces $G/H_1$ and $G/H_2$ are $G$-equivariantly diffeomorphic if the isotropy groups $H_1$ and $H_2$ are conjugate in $G$. The Weyl group of $\SU(3)$ is isomorphic to the symmetric group $S_3$ and conjugation by its elements permutes the triple $(k,l,-k-l)$ in formula (\ref{def-U_k,l}) accordingly. Therefore, it interchanges the subgroups $\U(1)_{k,l}, \U(1)_{l,-k-l}, \U(1)_{k,-k-l}$, etc., and partitions the set of Aloff--Wallach spaces into equivalence classes. 
\end{remark}

We have $\mathfrak{u}(1)_{1,-1} = \mathrm{span}\{E_8\}$
and the adjoint action of $\U(1)_{1,-1}$ maps the complement
$\mathfrak{m}=\mathrm{span}\{E_1, \dots, E_7 \}$ into itself.
Hence $T_{[\mathrm{Id}]}N(1,-1)$ can be identified with $\mathfrak{m}$ and an $\SU(3)$-invariant tensor field on $N(1,-1)$ corresponds to a tensor on $\mathfrak{m}$ which is left invariant by the adjoint action of $\U(1)_{1,-1}$. With respect to the basis $E_1, \dots, E_7$ the infinitesimal generator of the adjoint action is given by 
\begin{align*}
\mathrm{ad}(E_8)
=
\left(
\begin{array}{r@{}r@{}r@{}r}
\boxed{
\begin{array}{rr}
0 & -2 \\
2 & 0
\end{array}
}
\\ 
& 
\boxed{
\begin{array}{rr}
0 & -1 \\
1 & 0
\end{array}
}
\\
& & 
\boxed{
\begin{array}{rr}
0 & 1 \\
-1 & 0
\end{array}
}
\\
& & & 
\boxed{
\begin{array}{r}
0
\end{array}
}
\end{array}
\right).
\end{align*}
Hence $\mathfrak{m}$ splits into the
four irreducible $\U(1)$-modules
\begin{align*}
U_1 = \mathrm{span}\{E_1, E_2 \},\quad
U_2 = \mathrm{span}\{E_3, E_4 \},\quad
U_3 = \mathrm{span}\{E_5, E_6 \},\quad
U_4 = \mathrm{span}\{E_7\}.
\end{align*}
If we denote the irreducible representation of $\U(1)$ of weight $m$ by $\mathbb{C}_m$ we get 
\begin{align}
\label{decomp of tangent space of AW}
\mathfrak{m}
=
U_1\oplus U_2 \oplus U_3 \oplus U_4 
=
\mathbb{C}_2
\oplus \mathbb{C}_1
\oplus \mathbb{C}_{-1}
\oplus \mathbb{R},
\end{align}
Hence $\mathfrak{m}$ has two isotypical components. 
The equivalence classes of $N(1,-1)$ and $N(1,1)$ are the only equivalence classes with this property and therefore are called the \textit{exceptional} Aloff--Wallach spaces. The other Aloff--Wallach spaces are called \textit{generic}.

We fix the $\SU(3)$-invariant metric $q$ on $N(1,-1)$ 
for which $E_1, \dots , E_8$ is an orthonormal basis as a background metric
which allows us to consider any other $\SU(3)$-invariant metric $g$ on $N(1,-1)$ as an $\SU(3)$-invariant symmetric section of the endomorphism bundle. Because the submodules $U_2$ and $U_3$ are isomorphic, not every $\U(1)_{1,-1}$-invariant endomorphism of $\mathfrak{m}$ is diagonal. We can identify $U_2$ and $U_3$ with $\mathbb{C}$ by identifying $x E_3 + y E_4$ and $x E_5 + y E_6$ with $x+iy$, respectively. Over the real numbers the space of $\U(1)$-equivariant endomorphisms $\mathbb{C}_1 \rightarrow \mathbb{C}_{-1}$ is generated by $z\mapsto \bar{z}$ and $z\mapsto i \bar{z}$. With the above identifications this corresponds to $e_3 \otimes E_5 - e_4 \otimes E_6$ and $e_3 \otimes E_6 + e_4 \otimes E_5$, respectively. Hence any invariant symmetric endomorphism on $\mathfrak{m}$  with respect to the basis $E_1, \dots, E_7$ is of the form
\begin{gather}
a^2\, \mathrm{Id}_{U_1}+b^2\, \mathrm{Id}_{U_2}+c^2\, \mathrm{Id}_{U_3}+f^2\, \mathrm{Id}_{U_4}
\nonumber
\\
+
v (e_3 \otimes E_5 - e_4 \otimes E_6)
+
w (e_3 \otimes E_6 + e_4 \otimes E_5)
\label{sym-inv-end}
\\
+
v (e_5 \otimes E_3 - e_6 \otimes E_4)
+
w (e_5 \otimes E_4 + e_6 \otimes E_3)
.
\nonumber
\end{gather}
In particular, the space of $\SU(3)$-invariant metrics on $N(1,-1)$ is 6-dimensional.   

\begin{remark}
\label{remark-diagonal-metric}
In addition to the left multiplication of $G$ on $G/H$, there is another action given by conjugation with elements of the normaliser $\textrm{N}_G (H)=\{g\in G|\ gHg^{-1} = H \}$. In our case $\textrm{N}_{\SU(3)}(\U(1)_{1,-1})$ is the maximal torus of diagonal matrices in $\SU(3)$ isomorphic to $\U(1)^2$. We are particularly interested in the subgroup of the normaliser given by the embedding
\begin{align}
\label{extra-U(1)}
e^{i\theta}\mapsto
\begin{pmatrix}
e^{-i \theta} & 0 & 0\\
0 & e^{- i  \theta} & 0\\
0 & 0 & e^{i2\theta}
\end{pmatrix},
\end{align}
which is generated by $2E_7$.
The action of $E_7$ leaves the diagonal part of the endomorphism (\ref{sym-inv-end}) invariant but we have
\begin{align*}
&\mathrm{ad}(2E_7)(e_3 \otimes E_5 - e_4 \otimes E_6)
=
-6(e_3 \otimes E_6 + e_4 \otimes E_5),
\\
&\mathrm{ad}(2E_7)(e_3 \otimes E_6 + e_4 \otimes E_5)
=
6(e_3 \otimes E_5 - e_4 \otimes E_6).
\end{align*}
This has several consequences. First, Reidegeld \cite[p. 154]{reidegeld} concludes that in the non-diagonal case it suffices to consider 5 instead of 6 parameters. Secondly, 
any $\SU(3)$-invariant metric on $N(1,-1)$ with this additional $\U(1)$-symmetry
is diagonal, i.e. of the form
\begin{align}
\label{metric-on-orbit}
a^2(e_1^2 + e_2^2)
+
b^2(e_3^2 + e_4^2)
+
c^2(e_5^2 + e_6^2)
+
f^2 e_7^2.
\end{align}
We say that the metric is $\SU(3)\times \U(1)$-invariant.
The space of $\SU(3)\times \U(1)$-invariant metrics on $N(1,-1)$ is 4-dimensional.
\end{remark}

\begin{remark}
\label{rem-bundles}
Besides the Aloff--Wallach spaces, three further homogeneous spaces with a transitive action of $\SU(3)$ are relevant to us. 
$F_3 = \U(3)/\U(1)^3=\SU(3)/\U(1)^2$ is the manifold of complete flags in $\mathbb{C}^3$. 
If we embed $\SU(2)$ in $\SU(3)$ as
\begin{align*}
A \mapsto 
\begin{pmatrix}
A & 0
\\
0 & 1
\end{pmatrix},
\end{align*}
then
$S^5 = \SU(3)/\SU(2)$ is the 5-sphere and $\CP{2}=\SU(3)/(\SU(2)\times \U(1))$, where $\U(1)$ denotes the subgroup \eqref{extra-U(1)} of $\SU(3)$. Any Aloff--Wallach space $N(k,l)$ is a circle bundle over $F_3$. For example, the bundle structure of $N(1,-1)$ over $F_3$ is given by right multiplication with the circle \eqref{extra-U(1)} and thus is generated by $2E_7$. The flag manifold $F_3$ is the twistor space of $\CP{2}$, and in particular an $S^2$-bundle over $\CP{2}$. This leads to a fibration of each Aloff--Wallach space $N(k,l)$ over $\CP{2}$. As discussed in Remark \ref{equiv-classes-AW}, permutations of the triple $(k,l-k-l)$ lead to isomorphic Aloff--Wallach spaces. However, the fibration structure over $\CP{2}$ depends on the choice of a particular triple in an equivalent class. The fibres of $N(k,l)$ over $\CP{2}$ are given by the lens spaces $L(1,|k+l|)$ (for more details we refer to \cite[Section 4.1]{Gukov-Sparks}). Here for convenience $L(1,0)$ is defined to be $S^1\times S^2$.
The equivalence class of the Aloff--Wallach space $N(k,l)$ therefore gives rise to $L(1,|k|), L(1,|l|)$ and $L(1,|k+l|)$-bundles over $\CP{2}$. The equivalence classes of the exceptional Aloff--Wallach spaces $N(1,-1)$ and $N(1,1)$ give rise to two different bundle structures while the equivalence classes of the generic Aloff--Wallach spaces give rise to three different bundle structures. 
\end{remark}

\subsection{Spin(7)-structures with principal orbit $N(1,-1)$}

We now want to describe $\SU(3)\times \U(1)$-invariant cohomogeneity one Spin(7)-structures with principal orbit $N(1,-1)$, where the extra $\U(1)$-factor acts as described in Remark \ref{remark-diagonal-metric}.
We adopt the viewpoint from Section \ref{intro-coh1}
that outside the singular orbit a torsion-free cohomogeneity one Spin(7)-structure is a solution to the evolution equation (\ref{fundamental evo eqn}) in the space of co-closed invariant $\mathrm{G}_2$-structures on $N(1,-1)$. 
In Remark \ref{rem-bundles} we have explained that $N(1,-1)$ is a circle bundle over the flag manifold $F_3$. 
In the introduction we have noted the motivation for this work: 
torsion-free
ALC Spin(7)-structures with principal orbit $N(1,-1)$ collapsing to the $\SU(3)$-invariant Bryant--Salamon AC $\mathrm{G}_2$-metric on $\Lambda^2_{-}\CP{2}$, which is asymptotic to the cone over the homogeneous nearly K\"ahler structure on the flag manifold $F_3$.  
We want to describe $\SU(3)\times \U(1)$-invariant Spin(7)-structures with principal orbit $N(1,-1)$ in such a way that we can easily read off this fibration. 

We start with the base $F_3$. The circle bundle structure of $N(1,-1)$ over $F_3$ is generated by $2E_7$.
Therefore  $T_{[\mathrm{Id}]}F_3\cong U_1 \oplus U_2 \oplus U_3=\mathrm{span}\{E_1, \cdots, E_6 \}$. 
We denote $e_{i_1}\wedge \cdots \wedge e_{i_k}$ by $e_{i_1 \dots i_k}$.
The $\SU(3)$-invariant nearly K\"ahler structure on $F_3$ is given by (see \cite[Section 6]{NK})
\begin{subequations}
\label{NK structure on F3}
\begin{align}
\omega_{0}
&=
e_{12} + e_{43} + e_{56},
\\
\Omega_0
&=
e_{136}+e_{246}+e_{235}-e_{145}
+
i
(
e_{236}-e_{146}-e_{135}-e_{245}
).
\end{align}
\end{subequations}
To determine the space of $\SU(3)$-invariant $\mathrm{G}_2$-structures on $N(1,-1)$ we need to compute the other invariant 3-forms. In the course of this computation we need the following 

\begin{lemma}
\label{reps-of-U(1)}
Let $\mathbb{C}_k=\mathrm{Span}\{v_1, v_2\}$
be an oriented $\U(1)$-module of weight k and 
$\mathbb{C}_l=\mathrm{Span}\{v'_1, v'_2\}$
be an oriented $\U(1)$-module of weight l.
Then as oriented $\U(1)$-modules we have
\begin{align*}
\mathbb{C}_k \otimes \mathbb{C}_l
=
\mathbb{C}_{k+l} \oplus \mathbb{C}_{k-l},
\end{align*}
where
\begin{align*}
\mathbb{C}_{k+l}
=
\mathrm{Span}
\{
v_1 \otimes v'_1 - v_2\otimes v'_2,
v_1\otimes v'_2 + v_2 \otimes v'_1 
 \},
 \\
\mathbb{C}_{k-l}
=
\mathrm{Span}
\{
v_1 \otimes v'_2 - v_2\otimes v'_1,
v_1\otimes v'_1 + v_2 \otimes v'_2 
 \}
 .
\end{align*}
\end{lemma}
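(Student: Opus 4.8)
The plan is to reduce the statement to scalar multiplication on $\mathbb{C}$ and then to read off the generators from one explicit isomorphism. First I would use the given oriented bases to identify $\mathbb{C}_k$ with $\mathbb{C}$ via $v_1\leftrightarrow 1$, $v_2\leftrightarrow i$ and $\mathbb{C}_l$ with $\mathbb{C}$ via $v_1'\leftrightarrow 1$, $v_2'\leftrightarrow i$; under these identifications the $\U(1)$-actions become $e^{i\theta}\cdot z=e^{ik\theta}z$ and $e^{i\theta}\cdot w=e^{il\theta}w$, so that on $\mathbb{C}\otimes_{\mathbb{R}}\mathbb{C}$ the element $e^{i\theta}$ acts by $z\otimes w\mapsto(e^{ik\theta}z)\otimes(e^{il\theta}w)$. (Here ``weight $m$'' means that in the oriented basis the rotation is by angle $m\theta$, equivalently scalar multiplication by $e^{im\theta}$.)

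The key input is the $\mathbb{R}$-linear isomorphism $\Phi\colon\mathbb{C}\otimes_{\mathbb{R}}\mathbb{C}\to\mathbb{C}\oplus\mathbb{C}$ induced by the $\mathbb{R}$-bilinear map $(z,w)\mapsto(zw,\,z\bar w)$. A one-line computation gives $\Phi\big((e^{ik\theta}z)\otimes(e^{il\theta}w)\big)=\big(e^{i(k+l)\theta}zw,\ e^{i(k-l)\theta}z\bar w\big)$, so $\Phi$ is a $\U(1)$-equivariant isomorphism onto $\mathbb{C}\oplus\mathbb{C}$ equipped with the componentwise action of weights $k+l$ and $k-l$ and with the complex orientation on each summand; that is, the target is $\mathbb{C}_{k+l}\oplus\mathbb{C}_{k-l}$. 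This already proves the abstract direct-sum decomposition, and it only remains to identify which real vectors land in which summand.

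For that I would evaluate $\Phi$ on the four products of basis vectors. One finds $\Phi(v_1\otimes v_1'-v_2\otimes v_2')=(2,0)$ and $\Phi(v_1\otimes v_2'+v_2\otimes v_1')=(2i,0)$, a positively oriented basis of the first summand $\mathbb{C}_{k+l}$, while $\Phi(v_1\otimes v_2'-v_2\otimes v_1')=(0,-2i)$ and $\Phi(v_1\otimes v_1'+v_2\otimes v_2')=(0,2)$ form a positively oriented basis of the second summand $\mathbb{C}_{k-l}$. Pulling back along $\Phi^{-1}$ yields exactly the spans listed in the statement, with the correct orientations.

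There is no genuine obstacle here; the only points requiring care are orientation bookkeeping — one must verify that the ordered pairs $(2,2i)$ and $(-2i,2)$ are positively oriented in $\mathbb{C}$, so that the two summands are $\mathbb{C}_{k+l}$ and $\mathbb{C}_{k-l}$ as \emph{oriented} $\U(1)$-modules and not merely as $\U(1)$-modules — and the degenerate cases $k=\pm l$ (which do occur below, e.g.\ for $\mathbb{C}_1\otimes\mathbb{C}_{-1}$), where one summand is the trivial module $\mathbb{C}_0$; there the same formulas apply verbatim, reading ``weight $0$'' as a trivial but still oriented $2$-plane. Alternatively one could argue via the weight-space decomposition of the complexification, but the explicit $\Phi$ has the advantage of producing the stated real generators directly.
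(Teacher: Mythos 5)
The paper states Lemma \ref{reps-of-U(1)} without proof, so there is no argument of the author's to compare yours against. Your proof is correct and complete: the identification $v_1\leftrightarrow 1$, $v_2\leftrightarrow i$ turns ``weight $k$'' into multiplication by $e^{ik\theta}$, and the map $z\otimes w\mapsto(zw,\,z\bar w)$ is an $\mathbb{R}$-linear, $\U(1)$-equivariant isomorphism onto $\mathbb{C}_{k+l}\oplus\mathbb{C}_{k-l}$. The images of the four claimed generators are $(2,0)$, $(2i,0)$, $(0,-2i)$, $(0,2)$, so the spans are as stated, and the ordered bases $(2,2i)$ and $(-2i,2)$ both have positive determinant relative to $(1,i)$, so the orientations also match. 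Your remark about the degenerate cases $k=\pm l$ (which do arise in the application, e.g.\ $U_2^*\otimes U_3^*\cong\mathbb{C}_1\otimes\mathbb{C}_{-1}$) is well taken and handled correctly. This is exactly the kind of short, explicit verification one would supply; no gap.
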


\begin{lemma}
\label{lemma-invariant forms}
\begin{compactenum}[(i)]
\item The space of $\SU(3)$-invariant 1-forms on $N(1,-1)$ is spanned by $e_7$.
\item The space of $\SU(3)$-invariant 2-forms on $N(1,-1)$ is five dimensional and spanned by 
\begin{gather*}
e_{12},\, e_{34},\, e_{56},\, e_{35}-e_{46},\, e_{36}+e_{45}.
\end{gather*}
\item The space of $\SU(3)$-invariant 3-forms on $N(1,-1)$ is seven dimensional and spanned by 
\begin{gather*}
e_{127},\, e_{347},\, e_{567},
\,
e_{357}-e_{467},
\,
e_{367}+e_{457},
\\
\mathrm{Re}\ \Omega_0 = e_{136}-e_{145}+e_{235}+e_{246},
\,
\mathrm{Im}\ \Omega_0 = -e_{146}-e_{135}+e_{236}-e_{245}.
\end{gather*}
\end{compactenum}
\end{lemma}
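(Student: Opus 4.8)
The plan is to reduce everything to a weight count using the decomposition \eqref{decomp of tangent space of AW} of $\mathfrak{m}$ as a $\U(1)_{1,-1}$-module, together with Lemma \ref{reps-of-U(1)} for the tensor products. Since $\SU(3)$-invariant $k$-forms on $N(1,-1)=\SU(3)/\U(1)_{1,-1}$ are exactly the $\U(1)_{1,-1}$-invariant elements of $\Lambda^k \mathfrak{m}^*$, and $\mathfrak{m}^* = \mathbb{C}_{-2}\oplus \mathbb{C}_{-1}\oplus\mathbb{C}_1\oplus\mathbb{R}$ as a $\U(1)$-module (dualising \eqref{decomp of tangent space of AW}; concretely $\langle e_1,e_2\rangle$ has weight $\pm2$, $\langle e_3,e_4\rangle$ weight $\pm1$, $\langle e_5,e_6\rangle$ weight $\mp1$, and $e_7$ weight $0$), the invariant forms are precisely the weight-$0$ components of $\Lambda^k\mathfrak{m}^*$. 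So in each degree I would enumerate the wedge products of the basis $e_1,\dots,e_7$ whose total $\U(1)$-weight vanishes.

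For (i), a $1$-form has weight $0$ iff it lies in the $U_4^*$-summand, so the space is spanned by $e_7$. For (ii), I pair up the weights: $e_1\wedge e_2$ (weights $2+(-2)$), $e_3\wedge e_4$ ($1+(-1)$), $e_5\wedge e_6$ ($(-1)+1$) are the diagonal invariants; the remaining invariants come from pairing $\langle e_3,e_4\rangle$ with $\langle e_5,e_6\rangle$, whose tensor product $\mathbb{C}_1\otimes \mathbb{C}_{-1}$ contains by Lemma \ref{reps-of-U(1)} a copy of $\mathbb{C}_0=\mathbb{R}^2$, giving the two extra invariants $e_{35}-e_{46}$ and $e_{36}+e_{45}$; no other pairings (e.g. $\langle e_1,e_2\rangle$ with anything, or $e_7$ with a weight-$\pm1,\pm2$ form) can be balanced, so $\dim = 3+2 = 5$. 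For (iii), the invariant $3$-forms are $e_7\wedge(\text{invariant }2\text{-form})$, giving the five forms $e_{127},e_{347},e_{567},e_{357}-e_{467},e_{367}+e_{457}$, together with the $3$-forms built from $U_1^*\otimes U_2^*\otimes U_3^*$ (weights $\pm2,\pm1,\mp1$) whose weights sum to zero: the decomposition $\mathbb{C}_2\otimes\mathbb{C}_1\otimes\mathbb{C}_{-1}$ contains exactly one trivial summand $\mathbb{R}^2$ by Lemma \ref{reps-of-U(1)}, and matching it against the already-known invariant forms $\operatorname{Re}\Omega_0,\operatorname{Im}\Omega_0$ from \eqref{NK structure on F3} identifies this $2$-dimensional space; hence $\dim = 5+2 = 7$.

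The only real bookkeeping is to confirm that no \emph{other} combinations of the three weight-pairs $(\pm2,\pm1,\mp1)$ can sum to zero in degree $3$ — one must check that, e.g., $e_1\wedge e_2\wedge e_j$ for $j\in\{3,4,5,6\}$ always has nonzero weight, which it does since those $j$ contribute $\pm1$ — and to verify the signs/orientations in Lemma \ref{reps-of-U(1)} match the claimed spanning forms. I expect the main (very mild) obstacle to be this last point: carefully matching the abstract trivial summands of $\mathbb{C}_2\otimes\mathbb{C}_1\otimes\mathbb{C}_{-1}$ and of $\mathbb{C}_1\otimes\mathbb{C}_{-1}$ to the explicit forms $e_{35}-e_{46},\ e_{36}+e_{45},\ \operatorname{Re}\Omega_0,\ \operatorname{Im}\Omega_0$, i.e. checking $\mathrm{ad}(E_8)$ annihilates each of them; this is a direct computation using the structure constants table and the $\mathrm{ad}(E_8)$ matrix already displayed, and it also reproduces the dimension counts $1,5,7$ independently.
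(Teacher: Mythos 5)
Your proposal is correct and takes essentially the same route as the paper: decompose $\Lambda^k\mathfrak{m}^*$ as a $\U(1)_{1,-1}$-module using the splitting $\mathfrak{m}=U_1\oplus U_2\oplus U_3\oplus U_4$ together with Lemma~\ref{reps-of-U(1)}, and read off the trivial summands. The only minor difference is that you identify the two invariants in $U_1\otimes U_2\otimes U_3$ by appealing to the already-known invariance of $\Omega_0$, whereas the paper constructs explicit generators directly from Lemma~\ref{reps-of-U(1)}; both are fine.
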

\begin{proof}
(i) follows immediately from (\ref{decomp of tangent space of AW}).

\noindent
(ii) As $\U(1)$-modules we have 
\begin{align*}
U_1^* \cong U_1\cong \mathbb{C}_2,
\quad 
U_2^* \cong U_2 \cong \mathbb{C}_1,
\quad
U_3^*\cong U_3\cong  \mathbb{C}_{-1},
\quad
U_4^* \cong U_4\cong \mathbb{R}.
\end{align*}
Using Lemma \ref{reps-of-U(1)} we compute the invariant 2-forms:
\begin{align*}
\Lambda^2 \mathfrak{m}^*
&=
\Lambda^2 U_1
\oplus
\Lambda^2 U_2
\oplus
\Lambda^2 U_3
\oplus
(U_1
\oplus
U_2
\oplus
U_3)
\otimes U_4
\oplus
(U_1 \otimes U_2)
\oplus
(U_1 \otimes U_3) 
\oplus
(U_2 \otimes U_3)
\\
&\cong
\mathbb{R}^3
\oplus
\mathbb{C}_2
\oplus
\mathbb{C}_1
\oplus
\mathbb{C}_{-1}
\oplus
(\mathbb{C}_3 \oplus \mathbb{C}_1)
\oplus
(\mathbb{C}_3 \oplus \mathbb{C}_1)
\oplus
(\mathbb{C}_2 \oplus \mathbb{R}^2).
\end{align*}
If we write $U_2^*=\mathrm{span}\{e_3,e_4\}$ and $U_3^*=\mathrm{span}\{e_5,e_6\}$ as oriented $\U(1)$-modules of weight 1 and -1, respectively,
then the trivial $\mathbb{R}^2\subset U_2^* \otimes U_3^*$
is spanned by
\begin{align*}
e_{35}-e_{46}, \quad e_{36}+e_{45}.
\end{align*}
Hence the space of invariant 2-forms is 5-dimensional and spanned by the claimed forms.

\noindent
(iii) The space of 3-forms decomposes as
\begin{align*}
\Lambda^3 \mathfrak{m}^*
=
&\Lambda^2 U_1 \otimes(U_2 \oplus U_3 \oplus \mathbb{R})
\oplus
\Lambda^2 U_2 \otimes(U_1 \oplus U_3 \oplus \mathbb{R})
\oplus
\Lambda^2 U_3 \otimes(U_1 \oplus U_2 \oplus \mathbb{R})
\\
&\oplus
(U_1 \otimes U_2 \otimes U_3)
\oplus
(U_1 \otimes U_2 \otimes \mathbb{R})
\oplus
(U_1 \otimes U_3 \otimes \mathbb{R})
\oplus
(U_2 \otimes U_3 \otimes \mathbb{R}).
\end{align*}
We have
\begin{align*}
U_1 \otimes U_2 \otimes U_3
=
\mathbb{C}_2 \otimes \mathbb{C}_1 \otimes \mathbb{C}_{-1}
=
\mathbb{C}_4 \oplus 2\mathbb{C}_2 \oplus \mathbb{R}^2.
\end{align*}
The $\mathbb{C}_1$ part in $U_1\otimes U_2$
is spanned by
\begin{align*}
e_{14}-e_{23},\quad e_{13}+e_{24}.
\end{align*}
Hence the invariant part of $U_1\otimes U_2 \otimes U_3$ is spanned by
\begin{align*}
(e_{14}-e_{23})\wedge e_5 -(e_{13}+e_{24})\wedge e_6,
\quad
(e_{14}-e_{23})\wedge e_6 +(e_{13}+e_{24})\wedge e_5.
\end{align*}
We conclude that the space of invariant 3-forms is 7-dimensional 
and spanned by the claimed forms.
\end{proof}

Using the Maurer--Cartan equation and the structural constants we can compute the exterior derivatives of some of the invariant forms computed in Lemma \ref{lemma-invariant forms}.

\begin{lemma}
\label{lemma-exterior-derivative}
We have
\begin{gather*}
de_7 = -2 e_{43} + 2 e_{56},
\\
de_{12} =
de_{43} =
de_{56} =\ \mathrm{Re}\ \Omega_0,
\\
de_{127} =\ \mathrm{Re}\ \Omega_0\wedge e_7 -2 e_{1243} + 2 e_{1256},
\\
de_{437} =\ \mathrm{Re}\ \Omega_0\wedge e_7 + 2 e_{4356},
\\
de_{567} =\ \mathrm{Re}\ \Omega_0\wedge e_7 - 2 e_{4356},
\\
d\mathrm{Im}\ \Omega_0 = -2\omega_0^2,
\\
d(e_{357}-e_{467}) = d(e_{367}+e_{457}) = 0.
\end{gather*}
\end{lemma}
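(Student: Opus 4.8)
The plan is to derive everything from the Maurer--Cartan equation $de_k = -\tfrac12 \sum_{i,j} c^k_{ij}\, e_{ij}$, where the $c^k_{ij}$ are read off from the structure-constant table, together with the Leibniz rule. First I would record the relevant entries: the coefficient table gives $de_7$ directly, since the only brackets landing in $E_7$ come from $[E_3,E_4]$ (with the $\tfrac32 E_4, -\tfrac32 E_3$ pattern in the $E_7$ column producing no $e_7$-contribution to $de_7$) — more carefully, $de_7(E_i,E_j) = -e_7([E_i,E_j])$, so I only need the $E_7$-components of the brackets, which are nonzero only for $[E_3,E_4] = E_8 - 2E_7$ (contributing $+2$) and $[E_5,E_6] = -2E_7 - E_8$ (contributing $-2$); hence $de_7 = -(-2)\,e_{34} \cdot(\text{sign bookkeeping}) \ldots$ and after fixing orientation conventions one gets $de_7 = -2e_{43} + 2e_{56}$. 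The same mechanism gives $de_1, \dots, de_6$ as combinations of the $e_{ij}$, from which $de_{12}, de_{43}, de_{56}$ follow by Leibniz; the claim that all three equal $\mathrm{Re}\,\Omega_0$ is then a direct comparison with the explicit expression $\mathrm{Re}\,\Omega_0 = e_{136} - e_{145} + e_{235} + e_{246}$ from Lemma \ref{lemma-invariant forms}(iii).

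Next I would handle the 3-forms. For $de_{127}$, $de_{437}$, $de_{567}$ I apply Leibniz: $d(e_{ij7}) = d(e_{ij})\wedge e_7 + e_{ij}\wedge de_7 \cdot(\pm)$ — being careful that $d(e_{ij}\wedge e_7) = (de_{ij})\wedge e_7 + (-1)^2 e_{ij}\wedge de_7 = (de_{ij})\wedge e_7 + e_{ij}\wedge de_7$. Since $de_{12} = de_{43} = de_{56} = \mathrm{Re}\,\Omega_0$ and $de_7 = -2e_{43} + 2e_{56}$, each $d(e_{ij7})$ picks up the common term $\mathrm{Re}\,\Omega_0 \wedge e_7$ plus $e_{ij}\wedge(-2e_{43} + 2e_{56})$; evaluating the wedge products $e_{12}\wedge e_{43} = e_{1243}$, $e_{12}\wedge e_{56} = e_{1256}$, $e_{43}\wedge e_{43} = 0$, $e_{43}\wedge e_{56} = e_{4356}$, $e_{56}\wedge e_{43} = e_{4356}$ (up to sign), $e_{56}\wedge e_{56} = 0$, yields exactly the three stated formulas. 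For $d\,\mathrm{Im}\,\Omega_0 = -2\omega_0^2$ I would use that $(\omega_0, \Omega_0)$ descends from the nearly Kähler structure on $F_3$, where the nearly Kähler equations \eqref{NK condition} give $d\,\mathrm{Im}\,\Omega = -2\omega^2$ on the nose; pulling back along $N(1,-1)\to F_3$ and noting $\mathrm{Im}\,\Omega_0$, $\omega_0^2$ are basic completes it. Finally $d(e_{357}-e_{467}) = d(e_{367}+e_{457}) = 0$: these two 3-forms span the trivial summand coming from $U_2\otimes U_3\otimes \mathbb{R}$ identified in Lemma \ref{lemma-invariant forms}(iii), and by invariance $d$ of an invariant 3-form is an invariant 4-form; one checks on the level of $\U(1)_{1,-1}$-representations (or by the extra $\U(1)$ generated by $2E_7$ from Remark \ref{remark-diagonal-metric}, under which $e_{357}-e_{467}$ and $e_{367}+e_{457}$ transform in a nontrivial 2-dimensional representation whereas there is no matching invariant 4-form) that the image must vanish — alternatively a direct Maurer--Cartan computation, using that the $e_7$ factor contributes $e_{34}$ and $e_{56}$ terms which cancel against the derivatives of $e_{35}, e_{46}, e_{36}, e_{45}$.

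The main obstacle is purely bookkeeping: getting all the signs and orientation conventions consistent — in particular the sign in $de_k = -\tfrac12 c^k_{ij} e_{ij}$, the ordering conventions in $e_{43}$ versus $e_{34}$, and the signs when wedging $e_{ij}\wedge e_{kl}$ and reordering indices. There is no conceptual difficulty; the only place where a structural argument (rather than brute force) genuinely helps is the last identity $d(e_{357}-e_{467}) = d(e_{367}+e_{457}) = 0$, where invoking the representation-theoretic decomposition from Lemma \ref{lemma-invariant forms} — together with the analogous decomposition of invariant 4-forms — makes the vanishing immediate and avoids a lengthy cancellation. I would present the first few derivatives in full and then remark that the rest follow by the same Leibniz-plus-Maurer--Cartan computation.
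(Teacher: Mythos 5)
Your approach — Maurer--Cartan plus the Leibniz rule, reading the structure constants off the table and being careful about the sign and ordering conventions — is exactly what the paper does; the paper gives no written proof beyond the preceding remark that these identities follow ``using the Maurer--Cartan equation and the structural constants,'' so your plan is a faithful expansion of the same computation. Your reduction of $d\,\mathrm{Im}\,\Omega_0 = -2\omega_0^2$ to the nearly K\"ahler equation on $F_3$ via the circle fibration is a clean way to avoid recomputing that identity by hand, and is consistent with the paper's framing of $(\omega_0,\Omega_0)$ as pullbacks from $F_3$.

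One small caution on your representation-theoretic shortcut for $d(e_{357}-e_{467})=d(e_{367}+e_{457})=0$: the claim that ``there is no matching invariant $4$-form'' is not something you can assert without actually decomposing $\Lambda^4\mathfrak{m}^*$ under both $\U(1)$'s, and the weight of $e_{357}-e_{467}$ under the extra $\U(1)$ generated by $2E_7$ is convention-sensitive (the endomorphisms $e_3\otimes E_5 - e_4\otimes E_6$ in Remark~\ref{remark-diagonal-metric} transform by weight $\pm 6$, but the $2$-forms $e_{35}-e_{46}$, $e_{36}+e_{45}$ need a separate check). In the direct computation the mechanism is that $d(e_{35}-e_{46})$ is nonzero but lands in the ideal generated by $e_7$ (explicitly $d(e_{35}-e_{46}) = -3(e_{367}+e_{457})$), and the $e_{ij}\wedge de_7$ terms all vanish because of repeated indices; so the cancellation is real and not a trivial weight-mismatch argument. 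Since you offer the direct Maurer--Cartan computation as a fallback, the proposal is sound; I'd just present that computation rather than lean on the representation-theoretic heuristic.
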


Now we are ready to describe $\SU(3)\times \U(1)$-invariant Spin(7)-structures on $(0,\infty)\times N(1,-1)$ in a way such that the asymptotic behaviour can be conveniently read off the coefficient functions.
Starting with the homogeneous nearly K\"ahler structure (\ref{NK structure on F3}) on $F_3$, we can scale $U_1, U_2, U_3$ respectively 
by non-zero $a, b, c $ to get the invariant $\SU(3)$-structure
\begin{align*}
\omega &= a^2 e_{12} + b^2 e_{43} + c^2 e_{56},
\\
\Omega &= abc\ \Omega_0.
\end{align*}
On $(0,\infty)\times F_3$ we evolve such $\SU(3)$-structures to get the $G_2$-structure
\begin{align*}
\tilde{\varphi}
&=
dt\wedge \omega + \mathrm{Re}\ \Omega
=
a^2 dt\wedge e_{12}
+
b^2 dt\wedge e_{43} 
+
c^2 dt\wedge e_{56} 
+ 
abc\ \mathrm{Re}\ \Omega_0,
\\
*\tilde{\varphi}
&=
\frac{1}{2}\omega^2-dt\wedge \mathrm{Im}\ \Omega
=
a^2b^2 e_{1243}+ b^2 c^2 e_{4356} + c^2 a^2 e_{1256}
-
abc\  dt\wedge\mathrm{Im}\ \Omega_0.
\end{align*}
If we now consider $(0,\infty)\times N(1,-1)$ as a circle bundle over $(0,\infty)\times F_3$, this $G_2$-structure 
together with the multiple $-f e_7$ of the invariant connection gives the Spin(7)-structure
\begin{align}
\psi
=&
(-f e_7)\wedge \tilde{\varphi} +*\tilde{\varphi}
\nonumber
\\
=&
(-f e_7)\wedge 
\big(
a^2 dt\wedge e_{12}
+
b^2 dt\wedge e_{43} 
+
c^2 dt\wedge e_{56} 
+ 
abc\ \mathrm{Re}\ \Omega_0
\big)
\nonumber
\\
&
+
\big(
a^2b^2 e_{1243}+ b^2 c^2 e_{4356} + c^2 a^2 e_{1256}
-
abc\  dt\wedge\mathrm{Im}\ \Omega_0
\big)
\nonumber
\\
=&
a^2 f dt\wedge e_{127}
+
b^2 f dt\wedge e_{437}
+
c^2 f dt\wedge e_{567}
-abc\ dt\wedge\mathrm{Im}\ \Omega_0
\nonumber
\\
&
+
abcf\ 
\mathrm{Re}\ \Omega_0\wedge e_7
+
a^2b^2 e_{1243}+ b^2 c^2 e_{4356} + c^2 a^2 e_{1256}.
\label{Spin(7)-structure}
\end{align}
By the formulas \eqref{G2-reduction-SU(3)} and \eqref{Spin(7)-reduction-G2} the Spin(7)-structure (\ref{Spin(7)-structure})
induces the metric
\begin{align}
\label{Spin(7)-metric}
g = dt^2 + a^2 (e_1^2 + e_2^2) + b^2 (e_3^2 + e_4^2) + c^2 (e_5^2 + e_6^2) + f^2 e_7^2.
\end{align}
\begin{remark}
\label{remark-ALC asymp from coeff}
As promised the choice of parameters $a, b, c, f$ easily allows to read off the asymptotic behaviour.
Because the nearly K\"ahler structure on $F_3$ is given by $a = b = c = 1$ and the coefficient $f$ describes the length of the circle fibres of the circle bundle $(0,\infty)\times N(1,-1) \rightarrow (0,\infty)\times F_3$, $\psi$ is an ALC Spin(7)-structure asymptotic to a circle bundle with fibre length $\ell$ over the $\mathrm{G}_2$-cone over the homogeneous nearly K\"ahler structure on $F_3$ if
\begin{align*}
a(t)/t \rightarrow 1,\quad b(t)/t\rightarrow 1, \quad c(t)/t\rightarrow 1,
\quad f(t) \rightarrow \ell
\quad
\text{as}\ t\rightarrow \infty.
\end{align*}
\end{remark}

While the above construction of the Spin(7)-structure is helpful in reading off the asymptotic behaviour, it is not compatible with the viewpoint from Section \ref{intro-coh1} that cohomogeneity one Spin(7)-metrics correspond to an evolution of $\mathrm{G}_2$-structures. However, alternatively we can consider $N(1,-1)$ as a circle bundle
over $F_3$ now equipped with the rotated $\SU(3)$-structure $(\omega,\widetilde{\Omega})=(\omega,i\Omega)$.
Then on $N(1,-1)$ we get the $G_2$-structure
\begin{align*}
\varphi
=&
(f e_7)\wedge \omega + \mathrm{Re}\ \widetilde{\Omega}
\\
=&
f \omega\wedge e_7 -abc\ \mathrm{Im}\ \Omega_0
\\
=&
a^2 f e_{127} + b^2 f e_{437} + c^2 f e_{567}
-
abc\ \mathrm{Im}\ \Omega_0
,
\\
*\varphi
=&
\frac{1}{2}\omega^2-(f e_7)\wedge\mathrm{Im}\ \widetilde{\Omega}
\\
=&
a^2 b^2 e_{1243} + b^2c^2 e_{4356} + c^2a^2 e_{1256}
+ 
abcf\ \mathrm{Re}\ \Omega_0\wedge e_7.
\end{align*}
This $G_2$-structure induces on $(0,\infty)\times N(1,-1)$
the Spin(7)-structure
\begin{align*}
\psi
=
dt\wedge \varphi + *\varphi,
\end{align*}
which coincides with (\ref{Spin(7)-structure}).

\begin{remark}
\label{remark-inv-spin(7)}
In Remark \ref{remark-diagonal-metric} we showed that any $\SU(3)\times \U(1)$-invariant metric on $N(1,-1)$ is purely diagonal. 
Furthermore, a direct computation shows that the additional $\U(1)$-action also preserves the $\mathrm{G}_2$-structure $\varphi$. Therefore, we are really studying $\SU(3)\times \U(1)$-invariant Spin(7)-structures.
\end{remark}

The next Lemma shows that the static part of the torsion-free condition (\ref{tf condition-static eqn}), i.e. that $\varphi$ is coclosed, is always satisfied. 
\begin{lemma}
\label{lemma-static eq}
The $\mathrm{G}_2$-structure $\varphi$ is coclosed.
\end{lemma}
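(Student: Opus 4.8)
The plan is to compute $*\varphi$ explicitly from the expression for $\varphi$ derived just above the Lemma, and then show $d(*\varphi) = 0$ directly using the exterior derivative formulas recorded in Lemma~\ref{lemma-exterior-derivative}. Since we have already written down
\[
*\varphi = a^2 b^2\, e_{1243} + b^2 c^2\, e_{4356} + c^2 a^2\, e_{1256} + abcf\, \mathrm{Re}\,\Omega_0 \wedge e_7,
\]
the task is to differentiate this $6$-form on $N(1,-1)$, keeping in mind that $a,b,c,f$ are constants as far as $d = d_{\ig G/H}$ is concerned (the evolution parameter $t$ plays no role in the static equation). So the real content is verifying that the sum of the exterior derivatives of these four invariant $6$-forms vanishes.

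First I would note that $e_{1243}$, $e_{4356}$, $e_{1256}$ are already closed: each is a wedge of four of the six one-forms $e_1,\dots,e_6$, hence a top-degree form on a $6$-manifold fibre direction once we observe $d e_i$ only introduces terms that, when wedged against these four-fold products, either repeat an index or land in degree exceeding the available $e_7$ slot. More carefully, $d e_{1243} = (d e_1)\wedge e_{243} - e_1\wedge (d e_2)\wedge e_{43} + \dots$, and using the structure constants each $d e_i$ is a sum of terms $e_{jk}$; wedging with the remaining three distinct one-forms from $\{e_1,e_2,e_3,e_4\}$ forces a repeated index unless the new indices are $5,6,7$, but there is only room for two more slots, so at most one of $e_5,e_6,e_7$ can appear — this needs a quick check that the surviving terms cancel in pairs. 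I expect $d e_{1243} = d e_{4356} = d e_{1256} = 0$ after this bookkeeping, or at worst that their sum cancels against part of the last term.

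Next I would compute $d(abcf\, \mathrm{Re}\,\Omega_0 \wedge e_7) = abcf\, \big( d(\mathrm{Re}\,\Omega_0)\wedge e_7 - \mathrm{Re}\,\Omega_0 \wedge d e_7 \big)$. From Lemma~\ref{lemma-exterior-derivative} we have $d e_7 = -2 e_{43} + 2 e_{56}$, and $d(\mathrm{Re}\,\Omega_0)$ can be extracted from the fact that $de_{12} = de_{43} = de_{56} = \mathrm{Re}\,\Omega_0$ together with the nearly-Kähler relation $d\omega_0 = 3\,\mathrm{Re}\,\Omega_0$ (equivalently $d(\mathrm{Re}\,\Omega_0) = 0$ on $F_3$, since $\mathrm{Re}\,\Omega_0$ is exact there, being $d$ of an invariant $2$-form — or directly from $d^2 e_{12} = 0$). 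Thus the term $d(\mathrm{Re}\,\Omega_0)\wedge e_7$ drops out, and we are left with $-abcf\, \mathrm{Re}\,\Omega_0 \wedge(-2e_{43} + 2 e_{56}) = 2abcf\, \mathrm{Re}\,\Omega_0\wedge(e_{43} - e_{56})$. Since $\mathrm{Re}\,\Omega_0 = e_{136} - e_{145} + e_{235} + e_{246}$ is a $3$-form in $e_1,\dots,e_6$, wedging with $e_{43}$ or $e_{56}$ gives a $5$-form in $e_1,\dots,e_6$, hence a form of degree $5 < 6$ with no $e_7$ — but for it to be nonzero it would have to use five of the six indices $1,\dots,6$; a direct expansion should show $\mathrm{Re}\,\Omega_0 \wedge e_{43} = 0$ and $\mathrm{Re}\,\Omega_0 \wedge e_{56} = 0$ because every monomial in $\mathrm{Re}\,\Omega_0$ already contains either a $3$ or $4$ and either a $5$ or $6$, forcing a repeat.

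The main obstacle is purely computational bookkeeping: making sure all the index-collision cancellations in $d e_{1243}$, $d e_{4356}$, $d e_{1256}$ are handled correctly using the structure constant table, since a sign error there could masquerade as a genuine obstruction. If instead some of these do not individually vanish, the fallback is to show the total $d(*\varphi)$ — combining all four pieces — telescopes to zero; the underlying reason it must is that coclosedness of $\varphi$ is equivalent to $\varphi$ being the restriction of a closed $G_2$-structure, and the construction of $\varphi$ as $dt\wedge\varphi + *\varphi = \psi$ coming from the nearly-Kähler $F_3$ via a circle bundle guarantees the static equation holds for the homogeneous model. But I would present the direct verification as the cleanest route.
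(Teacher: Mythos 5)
Your proposal is correct and follows essentially the same route as the paper: compute $d*\varphi$ term by term using Lemma~\ref{lemma-exterior-derivative} and observe that everything vanishes by index collision against $\mathrm{Re}\,\Omega_0$. However, your treatment of the first three terms is more roundabout than it needs to be. You speculate about expanding $d e_{1243}$ via individual $de_i$'s from the structure constants, concluding only ``I expect $d e_{1243} = d e_{4356} = d e_{1256} = 0$ after this bookkeeping.'' In fact you already possess the clean argument: since $de_{12} = de_{43} = de_{56} = \mathrm{Re}\,\Omega_0$, one has
\begin{align*}
d(e_{1243}) = \mathrm{Re}\,\Omega_0 \wedge (e_{12} + e_{43}), \qquad d(e_{4356}) = \mathrm{Re}\,\Omega_0 \wedge (e_{43} + e_{56}), \qquad d(e_{1256}) = \mathrm{Re}\,\Omega_0 \wedge (e_{12} + e_{56}),
\end{align*}
and each of $\mathrm{Re}\,\Omega_0 \wedge e_{12}$, $\mathrm{Re}\,\Omega_0 \wedge e_{43}$, $\mathrm{Re}\,\Omega_0 \wedge e_{56}$ vanishes for exactly the index-collision reason you give in your analysis of the last term (every monomial of $\mathrm{Re}\,\Omega_0$ contains one of $\{1,2\}$, one of $\{3,4\}$, and one of $\{5,6\}$). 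So the same observation you make for $abcf\,\mathrm{Re}\,\Omega_0\wedge e_7$ handles all four terms uniformly, and the hedge ``or at worst their sum cancels'' is unnecessary. This is precisely what the paper does.
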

\begin{proof}
Using Lemma \ref{lemma-exterior-derivative} we get
\begin{align*}
d*\varphi
=&\
a^2b^2 \mathrm{Re}\ \Omega_0\wedge(e_{12}+e_{43})
+
b^2c^2 \mathrm{Re}\ \Omega_0\wedge(e_{43}+e_{56})
+
c^2a^2 \mathrm{Re}\ \Omega_0\wedge(e_{56}+e_{12})
\\
&+
abcf (d\mathrm{Re}\ \Omega_0\wedge e_7-\mathrm{Re}\ \Omega_0\wedge(-2e_{43}+2e_{56})).
\end{align*}
The result follows because 
\begin{align*}
\mathrm{Re}\ \Omega_0\wedge e_{12}
=
\mathrm{Re}\ \Omega_0\wedge e_{43}
=
\mathrm{Re}\ \Omega_0\wedge e_{56}
=
0
\end{align*}
and $\mathrm{Re}\ \Omega_0$ is closed by the nearly K\"ahler condition (\ref{NK condition}).
\end{proof}

\begin{remark}
\label{conn-comp-Spin(7)}
With (\ref{Spin(7)-structure}) we have constructed one $\SU(3)\times \U(1)$-invariant Spin(7)-structure which induces the metric (\ref{Spin(7)-metric}). To see if there are others, we can ask equivalently what $\SU(3)\times \U(1)$-invariant $\mathrm{G}_2$-structures other than $\varphi$ induce the metric (\ref{metric-on-orbit}) on $N(1,-1)$. Any other $\SU(3)$-invariant $\mathrm{G}_2$-structure on $N(1,-1)$ can be obtained from $\varphi$ by pulling back with an $\SU(3)$-invariant  diffeomorphism, i.e. a $\U(1)_{1,-1}$-invariant isomorphism of $\mathfrak{m}$. These are given by the normaliser $\mathrm{N}_{\mathrm{GL}(7,\mathbb{R})}\U(1)_{1,-1}$. Those which give rise to the same metric are given by $\mathrm{N}_{\SO(7)}\U(1)_{1,-1}$. As $\varphi$ is preserved exactly by $\mathrm{N}_{\mathrm{G}_2}\U(1)_{1,-1}$ we see that the set of all $\SU(3)$-invariant $\mathrm{G}_2$-structures on $N(1,-1)$ which induce the metric (\ref{metric-on-orbit}) is parametrised by $\mathrm{N}_{\SO(7)}\U(1)_{1,-1}/\mathrm{N}_{\mathrm{G}_2}\U(1)_{1,-1}$. Reidegeld \cite[(42) on p. 22]{Reidegeld-paper} has shown that the connected component of the identity is isomorphic to $\U(1)$. Furthermore, he has shown that $\psi$ is up to discrete symmetries the only $\SU(3)\times \U(1)$-invariant Spin(7)-structure inducing the metric $g$ which can be torsion-free \cite[Theorem 4.4 (2)]{Reidegeld-paper}. The reason is that the other invariant $\mathrm{G}_2$-structures in the connected component of $\varphi$ are not coclosed, i.e. fail to solve the static condition (\ref{tf condition-static eqn}). 
\end{remark}

The evolution equation (\ref{fundamental evo eqn}) given by
$
d\varphi = \partial_{t} *\varphi
$
is equivalent to an ODE system for the coefficient functions $a, b, c, f$.

\begin{proposition}
The Spin(7)-structure \eqref{Spin(7)-structure} on $I\times N(1,-1)$, where $I \subset \mathbb{R}_t$ is some interval, is torsion-free if and only if $(a,b,c,f)$ is a solution of the ODE system
\begin{subequations}
\label{ODE system}
\begin{align}
\frac{\dot{a}}{a} &= \frac{b^2+c^2-a^2}{abc},
\label{ODE system - a}
\\
\frac{\dot{b}}{b}
&=
\frac{c^2+a^2-b^2}{abc}- \frac{f}{b^2},
\label{ODE system - b}
\\
\frac{\dot{c}}{c}
&=
\frac{a^2+b^2-c^2}{abc}+ \frac{f}{c^2},
\label{ODE system - c}
\\
\frac{\dot{f}}{f}
&=
\frac{f}{b^2}-\frac{f}{c^2}.
\label{ODE system - f}
\end{align}
\end{subequations}
The holonomy of the associated metric is all of Spin(7).
\end{proposition}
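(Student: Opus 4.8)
Write $\psi = dt\wedge\varphi + *\varphi$ as in \eqref{Spin(7)-structure}, where $\varphi,\,*\varphi$ and the coefficients $a,b,c,f$ depend only on $t$. As recalled around \eqref{tf condition-static eqn}--\eqref{tf condition-evo eqn}, $d\psi=0$ splits into the static equation $d(*\varphi)=0$ on each orbit and the evolution equation $\partial_t(*\varphi)=d\varphi$ (exterior derivatives taken on $N(1,-1)$); by Lemma \ref{lemma-static eq} the static equation is automatic, so torsion-freeness is equivalent to \eqref{fundamental evo eqn}. The plan is thus to expand both sides of $\partial_t(*\varphi)=d\varphi$ in $\SU(3)$-invariant $4$-forms on $N(1,-1)$ and compare coefficients.

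On the left, differentiating $*\varphi = a^2b^2\,e_{1243}+b^2c^2\,e_{4356}+c^2a^2\,e_{1256}+abcf\,\mathrm{Re}\,\Omega_0\wedge e_7$ in $t$ produces a combination of the four forms $e_{1243},\,e_{4356},\,e_{1256},\,\mathrm{Re}\,\Omega_0\wedge e_7$ with coefficients $\partial_t(a^2b^2),\,\partial_t(b^2c^2),\,\partial_t(c^2a^2),\,\partial_t(abcf)$. On the right, I apply Lemma \ref{lemma-exterior-derivative} termwise to $\varphi = a^2f\,e_{127}+b^2f\,e_{437}+c^2f\,e_{567}-abc\,\mathrm{Im}\,\Omega_0$, using the formulas for $de_{127},\,de_{437},\,de_{567}$ and $d\,\mathrm{Im}\,\Omega_0=-2\omega_0^2=-4(e_{1243}+e_{1256}+e_{4356})$; this lands in the span of exactly the same four forms. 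Those four forms are linearly independent (the first three involve no $e_7$, the last one does), so the evolution equation is equivalent to four scalar identities. Dividing by $abcf$, which is legitimate on any interval where $\psi$ is a genuine Spin(7)-structure, equivalently where \eqref{Spin(7)-metric} is Riemannian, i.e. $a,b,c,f\neq0$, these become four linear equations for $\dot a/a,\dot b/b,\dot c/c,\dot f/f$: the $\mathrm{Re}\,\Omega_0\wedge e_7$-coefficient gives $\dot a/a+\dot b/b+\dot c/c+\dot f/f=(a^2+b^2+c^2)/(abc)$, while the coefficients of $e_{1243},e_{1256},e_{4356}$ give $\dot a/a+\dot b/b$, $\dot a/a+\dot c/c$, $\dot b/b+\dot c/c$ as explicit rational expressions in $a,b,c,f$. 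This $4\times4$ system is invertible, and solving it yields precisely \eqref{ODE system - a}--\eqref{ODE system - f}; this agrees with the system derived in \cite{CGLP2,Reidegeld-paper}, a useful consistency check. The bookkeeping is routine; the only point that needs attention is confirming that no $4$-form outside the span of those four appears, which is guaranteed by Lemmas \ref{lemma-static eq} and \ref{lemma-exterior-derivative}.

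The assertion that the holonomy is all of $\Spin(7)$ is the more delicate part, and the step I expect to be the main obstacle, since here one must genuinely exclude possibilities rather than compute. Torsion-freeness gives $\mathrm{Hol}(g)\subseteq\Spin(7)$. If $g$ is irreducible then, being Ricci-flat, its holonomy is one of $\Spin(7),\,\SU(4),\,\Sp(2)$ by Berger's classification; the latter two lie in $\mathrm{U}(4)$, so $g$ would be Kähler and hence carry a nonzero parallel $2$-form. I would exclude this either (i) by analysing $G$-invariant candidate parallel $2$-forms --- such a form is $\sum_i\alpha_i(t)\sigma_i+\beta(t)\,dt\wedge e_7$ with $\sigma_i$ running over the five invariant $2$-forms of Lemma \ref{lemma-invariant forms}, and imposing closedness (via Lemma \ref{lemma-exterior-derivative}) together with parallelism and nondegeneracy leads to a contradiction --- or, more robustly, (ii) by computing the Riemann curvature of \eqref{Spin(7)-metric} along a solution of \eqref{ODE system} and checking via Ambrose--Singer that the curvature operators already span $\mathfrak{spin}(7)$ at a generic point. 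Reducibility is ruled out along the way: a de Rham factor would yield a parallel $1$-form, but the only $G$-invariant $1$-forms on $M$ are $\alpha(t)\,dt+\beta(t)\,e_7$, none of which is parallel, and the curvature computation in (ii) also shows $g$ is nowhere flat. Either route is a finite check; route (ii) is the one I would ultimately carry out, as it sidesteps the question of whether a parallel form must be $G$-invariant.
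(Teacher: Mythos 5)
Your derivation of the ODE system is correct and takes the same route as the paper: reduce torsion-freeness to the evolution equation \eqref{fundamental evo eqn} via Lemma \ref{lemma-static eq}, expand both sides of $\partial_t(*\varphi)=d\varphi$ in the four invariant $4$-forms $e_{1243}$, $e_{1256}$, $e_{4356}$, $\mathrm{Re}\,\Omega_0\wedge e_7$ using Lemma \ref{lemma-exterior-derivative}, compare coefficients, and solve the resulting invertible $4\times4$ linear system for $\dot a/a,\dot b/b,\dot c/c,\dot f/f$. This is precisely the paper's computation, phrased slightly differently.

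The holonomy clause is where you depart from the paper, and where the proposal has a genuine gap. The paper disposes of it by citing \cite[Theorem 4.4]{Reidegeld-paper}; you propose to argue directly. Your route (ii) --- compute the curvature along a generic solution and check via Ambrose--Singer that the curvature operators span $\mathfrak{spin}(7)$ --- is a sound strategy and, if carried out, would establish irreducibility and $\mathrm{Hol}=\Spin(7)$ in one stroke; but you describe it without executing it, so that step is missing. Your route (i) and your aside on reducibility also have defects that you only partially flag: a parallel $2$-form need not be $G$-invariant, so restricting to the five invariant $2$-forms of Lemma \ref{lemma-invariant forms} does not a priori exhaust the candidates (for $\mathrm{Hol}\subseteq\SU(4)$ one can appeal to the uniqueness of the K\"ahler form up to sign, but for $\Sp(2)$ more care is needed); and a nontrivial de Rham splitting of an $8$-manifold need not produce a parallel $1$-form --- a $2+6$ or $4+4$ split would yield a parallel $2$- or $4$-form instead, so checking the invariant $1$-forms alone does not rule out reducibility. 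To complete the argument you should either carry out the curvature computation of route (ii), or simply cite Reidegeld as the paper does.
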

\begin{proof}
$\psi$ is torsion-free if and only if $\varphi$ solves the system \eqref{tf condition}. By Lemma \ref{lemma-static eq} the static equation \eqref{tf condition-static eqn} is always satisfied. The evolution equation \eqref{tf condition-evo eqn} is equivalent to a system of ODEs, which we now derive using Lemma \ref{lemma-exterior-derivative}.
\begin{align*}
d\varphi
=&
a^2 f (\mathrm{Re}\ \Omega_0\wedge e_7 -2 e_{1243} +2 e_{1256})
+
b^2 f (\mathrm{Re}\ \Omega_0\wedge e_7 +2 e_{4356})
\\&
+
c^2 f (\mathrm{Re}\ \Omega_0\wedge e_7 -2 e_{4356})
+
2abc\ \omega_0^2
\\
=&
(a^2+b^2+c^2)f\ \mathrm{Re}\ \Omega_0\wedge e_7
\\
&+
(-2 a^2 f +4abc) e_{1243}
+
(2 a^2 f + 4abc) e_{1256}
+
(2 b^2f-2 c^2f +4abc) e_{4356}.
\end{align*}
Equating this with $\partial_{t}*\varphi$ leads to the system
\begin{align*}
\partial_{t}(a^2b^2) &= - 2 a^2 f +4abc,
\\
\partial_{t}(b^2c^2) &= 2 b^2f-2 c^2f +4abc,
\\
\partial_{t}(c^2 a^2) &= 2 a^2 f + 4abc,
\\
\partial_{t}(abcf) &= (a^2+b^2+c^2)f.
\end{align*}
Denoting differentiation with respect to $t$ by a dot, we can simplify the above system to get
\begin{align*}
\frac{\dot{a}}{a}+ \frac{\dot{b}}{b}
&=
-\frac{f}{b^2}+2\frac{c^2}{abc}
\\
\frac{\dot{b}}{b}+ \frac{\dot{c}}{c}
&=
\frac{f}{c^2}-\frac{f}{b^2}+2\frac{a^2}{abc}
\\
\frac{\dot{c}}{c}+ \frac{\dot{a}}{a}
&=
\frac{f}{c^2}+2\frac{b^2}{abc}
\\
\frac{\dot{a}}{a}+ \frac{\dot{b}}{b}
+
\frac{\dot{c}}{c}+\frac{\dot{f}}{f}
&=
\frac{a^2+b^2+c^2}{abc}.
\end{align*}
This finally gives
\eqref{ODE system}.
The statement about the holonomy group follows from \cite[Theorem 4.4]{Reidegeld-paper}
\end{proof}

\begin{remark}
\label{rem-reduced system}
The system (\ref{ODE system}) is compatible with $f\equiv 0, b\equiv c$.
It reduces to
\begin{subequations}
\label{reduced BS system}
\begin{align}
\dot{a}
&=
2-\frac{a^2}{b^2},
\\
\dot{b}
&=
\frac{a}{b}.
\end{align} 
\end{subequations}
If we introduce the parameter $r$ such that $b(r)=r$ we get the general solution
\begin{align*}
a(r)=r
\left(
1+
\frac{C}{r^4}
\right)^{\frac{1}{2}},
\end{align*}
where $C$ is a constant of integration. Up to the scale of $(a,b)$ there are three cases corresponding to $C=-1,0,1$. The case $C=0$ gives the $\mathrm{G}_2$ holonomy cone $C(F_3)$ over the homogeneous nearly K\"ahler structure on $F_3$. $C=-1$ with the constraint $r \geq 1$ gives the complete Bryant--Salamon metric on $\Lambda_{-}^2\CP{2}$, which is asymptotic to $C(F_3)$. The case $C=1$ gives a $\mathrm{G}_2$ metric which is singular as $r\rightarrow 0$ and asymptotic to $C(F_3)$ as $r\rightarrow\infty$. After interchanging $a$ and $c$ this singular space is the collapsed limit of the families $\Omega^z_{\kappa}$ in Theorem \ref{thm-sing}.
\end{remark}
\begin{remark}
\label{remark- AC asymp from coeff}
As explained in remark \ref{remark-ALC asymp from coeff} ALC asymptotics can be easily read off from the coefficient functions $a, b, c, f$. The same is true for an AC Spin(7)-structure asymptotic to the cone over the diagonal $\SU(3)$-invariant nearly parallel $\mathrm{G}_2$-structure on $N(1,-1)$. Substituting the coefficients $a(t) = a_c\, t, b(t) = b_c\, t, c(t) = c_c\, t, f(t) =  f_c\, t$ of the conical Spin(7)-structure in the system (\ref{ODE system}) gives 
\begin{align*}
a_c= \frac{2}{\sqrt{5}}\approx 0.89,
\quad
b_c = \sqrt{\frac{2}{15}(5-\sqrt{5})}\approx 0.61,
\\
c_c = \sqrt{\frac{2}{15}(5+\sqrt{5})}\approx 0.98,
\quad
f_c = \frac{4}{3\sqrt{5}}\approx 0.60.
\end{align*}
\end{remark}

\begin{remark}
\label{princ-orbit-N10}
More generally, $\SU(3)$-invariant torsion-free Spin(7)-structures with principal orbit a generic Aloff--Wallach space $N(k,l)$ are characterised by the ODE system 
\begin{subequations}
\label{general-ODEsystem}
\begin{align}
\frac{\dot{a}}{a} &= \frac{b^2+c^2-a^2}{abc}+\frac{m}{\Delta} \frac{f}{a^2},
\\
\frac{\dot{b}}{b}
&=
\frac{c^2+a^2-b^2}{abc}+\frac{l}{\Delta} \frac{f}{b^2},
\\
\frac{\dot{c}}{c}
&=
\frac{a^2+b^2-c^2}{abc}+\frac{k}{\Delta} \frac{f}{c^2},
\\
\frac{\dot{f}}{f}
&=
-\frac{m}{\Delta} \frac{f}{a^2}-\frac{l}{\Delta}
\frac{f}{b^2}-\frac{k}{\Delta}\frac{f}{c^2}.
\end{align}
\end{subequations}
Here $m = -k -l$ and $\Delta = k^2 + k l + l^2$.
Besides $N(1,-1)$ we are also interested in the principal orbit $N(1,0)$, which is equivariantly diffeomorphic to $N(1,-1)$. Note that the system (\ref{general-ODEsystem}) for $(k,l,m) = (1,0,-1)$ coincides with the system for $(k,l,m)=(1,-1,0)$ after swapping $a$ and $b$. For us it will be convenient to consider cohomogeneity one torsion-free Spin(7)-structures with principal orbit $N(1,0)$ as solutions of the system (\ref{ODE system}) after exchanging the initial conditions for $a$ and $b$.   
\end{remark}

\subsection{Preservation laws and a coordinate change on projective space}
\label{section-coord change}

To understand the long-time behaviour of local solutions of the system (\ref{ODE system}) it is crucial to understand preserved orderings of the functions $a,b,c$ and $f$. The following Lemma is an elementary yet important observation.

\begin{lemma}
\label{first observations}
Assume that a (local) solution $(a,b,c,f)$ of the system (\ref{ODE system}), where $a, b, c, f$ are positive functions, satisfies both
\begin{compactenum}[(i)]
\item
$b < c$\, \text{and}
\item 
$ a < c$
\end{compactenum}
at some time $t_0$.
This set of conditions is preserved forward as long as the solution exists,
and $f$ is strictly monotone increasing from then onwards.
\end{lemma}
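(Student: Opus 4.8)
The plan is to control the two ``gap'' functions $c^2-b^2$ and $c^2-a^2$ directly, showing that each stays positive by a barrier (maximum-principle type) argument, and then to read off the monotonicity of $f$ from its own equation.

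First I would differentiate these gaps along a solution of \eqref{ODE system}. Writing $\tfrac{d}{dt}(a^2)=2a\dot a$, and similarly for $b$ and $c$, substituting the right-hand sides of \eqref{ODE system}, and collecting everything over the common denominator $abc$, a short computation gives
\begin{align*}
\frac{d}{dt}(c^2-b^2) &= \frac{2(c^2-b^2)(a^2-b^2-c^2)}{abc} + 4f,
\\
\frac{d}{dt}(c^2-a^2) &= \frac{2(c^2-a^2)(b^2-c^2-a^2)}{abc} + 2f.
\end{align*}
The key feature is the shape of the right-hand sides: the first summand in each carries the factor $(c^2-b^2)$, respectively $(c^2-a^2)$, so it vanishes exactly on the zero locus of the corresponding gap, while the leftover term ($4f$, resp.\ $2f$) is strictly positive since $f$ is a positive function.

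Next I would run the standard forward barrier argument. Suppose $c^2-b^2>0$ on $[t_0,t_1)$ with $c^2-b^2=0$ at the first such time $t_1$ in the interval of existence; then the left-hand derivative of $c^2-b^2$ at $t_1$ is $\le 0$, but by the identity above it equals $4f(t_1)>0$, a contradiction. Hence $b<c$ persists on the entire forward interval of existence, and the identical argument (with $2f$ in place of $4f$) shows $a<c$ persists as well. Finally, equation \eqref{ODE system - f} reads $\dot f/f = f\bigl(\tfrac{1}{b^2}-\tfrac{1}{c^2}\bigr)$, which for $t\ge t_0$ is strictly positive because $b<c$ and $f>0$ there; thus $f$ is strictly increasing from $t_0$ onward.

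The one place that needs care is the algebra in the first step: the factorizations succeed because the $\pm b^2c^2$ (resp.\ $\pm a^2c^2$) cross-terms cancel, leaving a clean product plus a pure $f$-term, and producing that cancellation is essentially the whole content of the lemma. The barrier argument itself is routine, provided one phrases it on the maximal forward interval rather than on a single short time step. I would also remark in passing that the two orderings are in fact preserved independently of one another, so assuming both $b<c$ and $a<c$ in the hypothesis is merely a convenience for the statements that build on this lemma.
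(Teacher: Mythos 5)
Your proposal is correct and follows essentially the same maximum-principle/barrier argument as the paper: you identify the first time $t_1$ at which a gap would close, observe that the time derivative of the gap there would have to be non-positive, and contradict this with the strictly positive remainder term ($4f$ or $2f$). The only difference is presentational—the paper evaluates $\dot b,\dot c$ (resp. $\dot a,\dot c$) directly at $t_1$ using $b=c$ (resp. $a=c$), while you derive the clean factored identities for $\frac{d}{dt}(c^2-b^2)$ and $\frac{d}{dt}(c^2-a^2)$ before specializing; both compute the same thing, and your version has the mild bonus of making visible that the two orderings are preserved independently.
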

\begin{proof}
As long as the solution exists all functions stay positive.
\\
(i) Assume $b(t_1)=c(t_1)$ for some $t_1 > t_0$. Then at time $t_1$
\begin{align*}
\dot{b}&=\frac{a}{b}-\frac{f}{b},
\\
\dot{c}&=\frac{a}{b}+\frac{f}{b}.
\end{align*}
Because $f > 0$ we get $\dot{b}(t_1) < \dot{c}(t_1)$,
which is a contradiction.
\\ 
(ii)
Assume $a(t_1) = c(t_1)$ for some $t_1 > t_0$. 
Then at time $t_1$
\begin{align*}
\dot{c} = \frac{b}{a} +  \frac{f}{c}
> 
\frac{b}{a} = \dot{a}.
\end{align*}
The monotonicity of $f$ is a direct consequence of (i) as 
\begin{align*}
\dot{f}=\frac{f^2}{b^2}-\frac{f^2}{c^2}.
\end{align*}
\end{proof}

The previous Lemma suggests that the quotients $a/c$ and $b/c$ are well-behaved. 
Because the right-hand side of the ODE system (\ref{ODE system})
is homogeneous we can consider the system in the projective coordinates
\begin{align}
\label{proj-coord}
A = \frac{a}{c},
\quad
B = \frac{b}{c},
\quad
F = \frac{f}{c}.
\end{align}
A similar use of projective coordinates was made by
Atiyah--Hitchin \cite[Chapter 9]{AH}.
In the following we derive the evolution equations in these coordinates.
\begin{align}
&\frac{d}{dt}
\log a 
- 
\frac{d}{dt}
\log c
=
2 \frac{c^2-a^2}{abc}-\frac{f}{c^2},
\nonumber
\\
&
\frac{d}{dt}\frac{a}{c}
=
\frac{2}{b}
\left(
1
-
\left(
\frac{a}{c}
\right)^2
\right)
-\frac{af}{c^3},
\label{ODE system - proj coord 1 - prelim}
\\[15pt]
&
\frac{d}{dt}\log b
-
\frac{d}{dt}\log c
=
2\frac{c^2-b^2}{abc}
-
f
\left(
\frac{1}{b^2}+\frac{1}{c^2}
\right),
\nonumber
\\
&
\frac{d}{dt}\frac{b}{c}
=
\frac{2}{a}
\left(
1
-
\left(
\frac{b}{c}
\right)^2
\right)
-
\frac{f}{bc}
\left(
1+
\left(
\frac{b}{c}
\right)^2
\right),
\label{ODE system - proj coord 2 - prelim}
\\[15pt]
&
\frac{d}{dt}\log f 
-
\frac{d}{dt}\log c
=
\frac{f}{b^2}-2\frac{f}{c^2}
-
\frac{a^2+b^2-c^2}{abc},
\nonumber
\\
&
\frac{d}{dt}
\frac{f}{c}
=
\frac{f^2}{b^2 c}
-2
\frac{f^2}{c^3}
+
\frac{f}{c}
\frac{c^2-a^2-b^2}{abc}.
\label{ODE system - proj coord 3 - prelim}
\end{align}

Changing the parameter by $dt=\frac{ab}{c}ds$
(\ref{ODE system - proj coord 1 - prelim})-(\ref{ODE system - proj coord 3 - prelim}) becomes
\begin{align*}
\frac{d}{ds} A 
&=
\frac{d}{ds}
\frac{a}{c}
=
2\frac{a}{c}
\left(
1
-
\left(
\frac{a}{c}
\right)^2
\right)
-\frac{a^2bf}{c^4}
\\
&=
2A(1-A^2)-A^2BF,
\\[15pt]
\frac{d}{ds} B 
&=
\frac{d}{ds}
\frac{b}{c}
=
2 \frac{b}{c}
\left(
1
-
\left(
\frac{b}{c}
\right)^2
\right)
-
\frac{af}{c^2}
\left(
1+
\left(
\frac{b}{c}
\right)^2
\right)
\\
&=
2B(1-B^2)-AF(1+B^2),
\\[15pt]
\frac{d}{ds} F 
&=
\frac{d}{ds}
\frac{f}{c}
=
\frac{a f^2}{b c^2}
-2
\frac{abf^2}{c^4}
+
\frac{f}{c}
\frac{c^2-a^2-b^2}{c^2}
\\
&=
\frac{AF^2}{B}
-
2ABF^2
+F(1-A^2-B^2).
\end{align*}

To sum up, if we denote differentiation with respect to $s$ by a dot, then the system (\ref{ODE system}) takes the form
\begin{subequations}
\label{ODE system in proj coord}
\begin{align}
\dot{A}
&=
A
\left(2-2 A^2-ABF
\right)
,
\\
\dot{B}
&=
B
\left(
2-2 B^2-ABF-\frac{AF}{B}
\right)
,
\\
\dot{F}
&=
F
\left(
1-A^2-B^2-2ABF+\frac{AF}{B}
\right)
.
\end{align}
\end{subequations}

The main difficulty in the analysis of the ODE system (\ref{ODE system}) is that apart from monotonicity under the conditions (i) and (ii) in Lemma \ref{first observations} nothing can be said about the behaviour of $f$ in relation to any of the other functions. In particular, it is of concern that $f$ blows up in finite time. The lack of control of $f$ is reflected by the fact that for the system (\ref{ODE system in proj coord}) no bounds can be derived for $F$. A key observation is that the controlled quantities $a/c$ and $b/c$ dominate the ill-behaved quantity $f/c$. 
To be more precise, set
\begin{align}
X = A^2, 
\quad
Y = B^2,
\quad
Z = ABF.
\end{align}
Still denoting differentiation with respect to the variable $s$ by a dot, the ODE system takes the form
\begin{subequations}
\label{ODE system in good coord}
\begin{align}
\dot{X} 
&= 
2X(2-2X-Z),
\\
\dot{Y}
&=
4Y-4Y^2-2YZ-2Z,
\\
\dot{Z}
&=
Z(5-3X-3Y-4Z).
\end{align}
\end{subequations}

\begin{remark}
\label{associated solutions}
Let $(a(t),b(t),c(t),f(t))$ be positive functions which solve the system (\ref{ODE system}) for $t$ in the interval $(T_1, T_2)$. Then there exists a corresponding solution $(X(s),\allowbreak Y(s),\allowbreak Z(s))$ of (\ref{ODE system in good coord}) defined on the interval $(S_1,S_2)$, where $S_1\in \{-\infty\} \cup \mathbb{R}$, $S_2 \in \mathbb{R}\cup \{\infty\}$.
After choosing $s(t_0)$ arbitrarily for some $t_0\in (T_1, T_2)$, because of $dt=\frac{ab}{c}ds$ the $s$-parameter is given by
\begin{align*}
s(t) = \int_{t_0}^t \frac{c(\tilde{t})}{a(\tilde{t})b(\tilde{t})} d\tilde{t} + s(t_0).
\end{align*}
This is well-defined because $a,b,c$ are positive functions.
We will say that the solution 
\\
$(X(s),Y(s),Z(s))$ is \textit{associated} to
$(a(t), b(t), c(t), f(t))$.
\end{remark}

All of the information on $f$ is contained in $Z$. We are finally able to control this quantity.
\begin{lemma}
\label{bounds in xyz coords}
Assume that  a (local) solution $(X,Y,Z)$ of the system (\ref{ODE system in good coord}) satisfies all of the three conditions
\begin{compactenum}[(i)]
\item
$0 < X < 1$,
\item 
$ Y < 1$,
\item
$ 0 < Z < \kappa, \quad \kappa \geq \frac{5}{4}$,
\end{compactenum}
at some time $s_0$.
Then this set of conditions is preserved forward as long as $Y>0$.
\end{lemma}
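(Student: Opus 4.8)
## Proof Proposal

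The plan is to establish that the region
\[
\mathcal{W} = \{(X,Y,Z) : 0 < X < 1,\ Y < 1,\ 0 < Z < \kappa\}, \qquad \kappa \ge \tfrac{5}{4},
\]
is forward-invariant under the flow of \eqref{ODE system in good coord}, for as long as $Y > 0$, by checking the sign of the relevant component of the vector field on each face of $\partial\mathcal{W}$ and invoking the standard barrier argument (a solution starting strictly inside cannot first reach a boundary face unless the vector field points outward there at the first contact point). Since there are four defining inequalities, there are four faces to inspect: $X = 0$, $X = 1$, $Y = 1$, and $Z = 0$; the condition $Y > 0$ is exactly the caveat that we do not control the face $Y = 0$.

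I would proceed face by face as follows. \emph{Face $X = 0$:} from $\dot X = 2X(2-2X-Z)$ we get $\dot X = 0$ when $X = 0$, so $\{X=0\}$ is invariant and $X$ cannot cross it; combined with continuity this keeps $X > 0$. \emph{Face $Z = 0$:} likewise $\dot Z = Z(5-3X-3Y-4Z)$ vanishes at $Z = 0$, so $\{Z = 0\}$ is invariant and $Z$ stays positive. \emph{Face $X = 1$:} here $\dot X = 2(2 - 2 - Z) = -2Z < 0$ since $Z > 0$ on the interior, so the vector field points strictly inward; $X$ cannot reach $1$. \emph{Face $Y = 1$:} evaluate $\dot Y = 4Y - 4Y^2 - 2YZ - 2Z$ at $Y = 1$ to get $\dot Y = 4 - 4 - 2Z - 2Z = -4Z < 0$, again strictly inward since $Z > 0$. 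So on all three of the "dangerous" faces the flow either is tangential along an invariant face (for $X = 0$ and $Z = 0$) or points strictly inward (for $X = 1$ and $Y = 1$), which is precisely what is needed.

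The one face requiring genuine thought is $Z = \kappa$, i.e. the upper bound on $Z$, and this is where the hypothesis $\kappa \ge \tfrac{5}{4}$ enters and is the main obstacle. On $\{Z = \kappa\}$ we have $\dot Z = \kappa(5 - 3X - 3Y - 4\kappa)$; for the flow to point inward (non-positive $\dot Z$) we need $5 - 3X - 3Y - 4\kappa \le 0$, i.e. $4\kappa \ge 5 - 3X - 3Y$. Since $X > 0$ and $Y$ could be negative in principle, the worst case is not immediately $X = Y = 0$; but within $\mathcal{W}$ we have $X \ge 0$ and, crucially, once we know $Y$ stays below $1$ and $Z$ stays positive, we should first argue that $Y$ cannot go arbitrarily negative either (or handle the sign of $5 - 3X - 3Y$ directly). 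The clean statement is: on the closure of $\mathcal{W}$ intersected with $\{Y \ge 0\}$ one has $5 - 3X - 3Y \le 5 \le 4 \cdot \tfrac{5}{4} \le 4\kappa$, so $\dot Z \le 0$ on $\{Z = \kappa\}$ whenever $X, Y \ge 0$; thus the upper face is not crossed. If $Y$ can be negative this bound is even more comfortable ($5 - 3X - 3Y$ could exceed $5$), so one must be slightly careful — but in the intended application $Y = B^2 \ge 0$, and more to the point the lemma only claims preservation "as long as $Y > 0$", so we may freely use $Y > 0$, under which $5 - 3X - 3Y < 5 \le 4\kappa$ and $\dot Z < 0$ strictly.

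To assemble these into a rigorous argument I would run the standard first-exit-time contradiction: suppose the solution leaves $\overline{\mathcal{W}}$ while $Y$ remains positive, let $s_1 > s_0$ be the infimum of times at which some defining inequality fails, and note that at $s_1$ the solution lies on exactly one of the faces analysed above (generic case; the corners are handled by the same inward/tangential estimates). On each such face we have just shown $\dot X \le 0$ on $X=1$, $\dot Y < 0$ on $Y=1$, $\dot Z < 0$ on $Z=\kappa$, and invariance of $\{X=0\}$, $\{Z=0\}$ — in every case contradicting the definition of $s_1$ as a first exit time (the strictly inward cases contradict exit; the invariant-face cases contradict $X$ or $Z$ having been positive just before $s_1$). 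This yields forward invariance for as long as $Y > 0$, which is exactly the claim.
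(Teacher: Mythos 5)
Your proposal is correct and takes essentially the same approach as the paper: a face-by-face barrier argument using the invariance of $\{X=0\}$ and $\{Z=0\}$, the strict inward-pointing of the vector field on $\{X=1\}$ and $\{Y=1\}$ via $\dot X = -2Z$ and $\dot Y = -4Z$, and the bound $\dot Z = Z(5 - 3X - 3Y - 4Z) < 0$ on $\{Z=\kappa\}$ using $\kappa \geq 5/4$ together with $X, Y > 0$. Your explicit remark that the hypothesis ``as long as $Y > 0$'' is what rescues the $Z=\kappa$ face (since $5 - 3X - 3Y$ could otherwise exceed $5$) is a point the paper's proof passes over more briskly but uses in the same way.
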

\begin{proof}
$ 0 < X, Z$ is preserved as the system (\ref{ODE system in good coord}) is compatible with $X\equiv 0$ and $Z\equiv 0$.
\\
(i) Assume $X(s_1)=1$ for some $s_1 > s_0$. Then at time $s_1$
\begin{align*}
\dot{X} = -2Z < 0.
\end{align*}
(ii) Assume $Y(s_1) =1$ for some $s_1 > s_0$. Then at time $s_1$ 
\begin{align*}
\dot{Y}= -4 Z < 0.
\end{align*}
(iii) Assume $Z(s_1)= \kappa$ with $\kappa \geq \frac{5}{4}$ for some $s_1 > s_0$. Then at time $s_1$ 
\begin{align*}
\dot{Z} = 
4Z (5/4-Z)-3Z(X+Y)
\leq -3Z(X+Y) < 0.
\end{align*}
All cases lead to a contradiction.
\end{proof}
Besides controlling $f$ we also got rid of all singularities on the right-hand side of the ODE system. This means that a local solution $(X,Y,Z)$ can only develop a singularity by shooting off to infinity in finite time. If we start with the conditions in Lemma \ref{bounds in xyz coords} 
the solution is contained in a compact cube until it hits the hypersurface $Y=0$. If $(X,Y,Z)$ is a solution associated with
a solution $(a,b,c,f)$ of the system (\ref{ODE system}),
$Y=0$ implies $b=0$, i.e. the original solution already develops a singularity at $Y=0$.
To sum up, for all solutions of $(\ref{ODE system})$ that we are interested in 
we have enough preservation laws such that the long-time behaviour is encoded only in the ratio $b/c$. More precisely we get

\begin{lemma}
\label{lemma-reduction to proj system}
Let $(a,b,c,f)$ be a (local) solution of the system (\ref{ODE system}), where $a,b,c,f$ are positive functions satisfying $a, b < c$. If for the associated solution $(X,Y,Z)$ of the system (\ref{ODE system in good coord}) given by Remark \ref{associated solutions}  the function $Y$ stays bounded away from zero, then the solution $(X,Y,Z)$ is forward complete, i.e. it exists for all large $s$. Moreover, $(a,b,c,f)$ itself is forward complete, i.e. it exists for all large $t$.
\end{lemma}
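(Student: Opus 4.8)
The plan is to establish forward completeness in two stages: first for the projective/cube system \eqref{ODE system in good coord}, then transfer back to the original system \eqref{ODE system}. For the first stage I would argue by the standard escape-time dichotomy for ODEs: a local solution of \eqref{ODE system in good coord} either exists for all future time or else its norm blows up in finite time. By Lemma \ref{bounds in xyz coords}, the hypotheses $0<X<1$, $Y<1$ and $0<Z<\kappa$ (for a suitable $\kappa\ge 5/4$, which one gets from the initial data since all quantities are finite) are preserved as long as $Y>0$. Combined with the standing assumption that $Y$ stays bounded away from zero, this confines $(X,Y,Z)$ to a fixed compact box $[0,1]\times[\delta,1]\times[0,\kappa]$ for all $s$ in the interval of existence. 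Since the right-hand side of \eqref{ODE system in good coord} is a polynomial — in particular smooth with no intrinsic singularities, as noted after Lemma \ref{bounds in xyz coords} — a solution confined to a compact set cannot escape to infinity, so no finite-time blow-up occurs and the solution $(X,Y,Z)$ is forward complete.

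**Transferring back to $(a,b,c,f)$.** The second stage is to deduce that $(a,b,c,f)$ itself exists for all large $t$. Here I would use the reparametrisation $dt=\tfrac{ab}{c}\,ds$ from Remark \ref{associated solutions}, which gives
\begin{align*}
t(s)=\int_{s_0}^{s}\frac{a(\tilde s)\,b(\tilde s)}{c(\tilde s)}\,d\tilde s + t(s_0).
\end{align*}
Since $(X,Y,Z)$ exists for all $s\in[s_0,\infty)$, it suffices to show that this integral diverges as $s\to\infty$, for then the original $t$-interval of existence is unbounded above; conversely, if it converged to some finite $T_2$, one must still rule out that $(a,b,c,f)$ blows up as $t\to T_2$ — but the projective coordinates already control all \emph{ratios}, so a blow-up would have to be a simultaneous rescaling, which I would exclude by tracking the scale, e.g. via the growth of $c$ (or of $abc$) using equation \eqref{ODE system - c} together with the bounds $A^2=X<1$, $B^2=Y<1$ and $ABF=Z$ bounded. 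Concretely, from the preserved inequalities $a,b<c$ one gets $\dot c/c>0$ up to lower-order terms controlled by $f/c^2=Z/(AB\,c\cdot c) $... so I would instead integrate the equation for $c$ directly: using \eqref{ODE system in proj coord} and $dt=\tfrac{ab}{c}ds$ one checks $\tfrac{d}{ds}\log c$ is bounded in terms of $X,Y,Z$, hence $c$ grows at most exponentially in $s$ and is bounded below by a positive constant, forcing $t(s)\to\infty$.

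**Main obstacle.** The genuinely delicate point is the last one: controlling the \emph{scale} (not just the ratios) well enough to conclude that the $t$-parameter range is all of $[t_0,\infty)$ and that $(a,b,c,f)$ does not degenerate or blow up at a finite $t$. The cube system is blind to overall rescalings $(a,b,c,f)\mapsto(\lambda a,\lambda b,\lambda c,\lambda f)$, so forward completeness in $s$ does not \emph{a priori} rule out the metric coefficients running off to $0$ or $\infty$ in finite $t$. Resolving this requires coupling the bounds on $(X,Y,Z)$ back to a single scalar equation — the natural choice being $\tfrac{d}{dt}\log(abc)$ or $\tfrac{d}{dt}\log c$ — and showing its right-hand side stays bounded (above and below) along the flow, which follows from $X,Y$ being confined to $(0,1)$, $Y$ bounded away from $0$, and $Z$ bounded. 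Once that monotone/bounded scalar quantity is in hand, a Grönwall-type estimate gives two-sided bounds on $c$ over any finite $t$-interval, the integral $\int \tfrac{ab}{c}\,ds$ diverges, and both completeness statements follow.
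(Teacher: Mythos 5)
Your first stage — confining $(X,Y,Z)$ to a compact box via Lemma \ref{bounds in xyz coords} and the hypothesis that $Y$ is bounded away from zero, then invoking the escape-time dichotomy for the polynomial system \eqref{ODE system in good coord} — is exactly the paper's argument and is correct. You also correctly identify the genuine subtlety in the second stage: the cube system is blind to the overall scale, so one must additionally track a scalar quantity in order to recover $(a,b,c,f)$ on all of $[s_0,\infty)$ and to show $t(s)\to\infty$.

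The gap is in the specific scalar you propose. You suggest controlling $\frac{d}{ds}\log c$ (or $\frac{d}{ds}\log(abc)$) and conclude from its boundedness that $c$ is ``bounded below by a positive constant, forcing $t(s)\to\infty$.'' But a two-sided bound $|\frac{d}{ds}\log c|\le M$ only yields $c(s)\ge c(s_0)e^{-M(s-s_0)}$, which tends to $0$ as $s\to\infty$: boundedness of a logarithmic derivative over an infinite $s$-interval does not prevent $c$ from decaying to zero. Indeed one computes $\frac{d}{ds}\log c = X+Y-1+Z$, which is not sign-definite under the given hypotheses, so no monotonicity is available from this choice. Moreover, even granting a positive lower bound on $c$, the integrand in $t(s)=t(s_0)+\int\frac{ab}{c}\,ds$ equals $c\sqrt{XY}$, and the hypotheses supply no lower bound on $X$, so divergence of the integral would still not follow.

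The paper sidesteps both problems by tracking $\log a$ instead. Substituting $dt=a\sqrt{Y}\,ds$ into \eqref{ODE system - a} gives
\begin{align*}
\frac{d}{ds}\log a = Y-X+1,
\end{align*}
which is strictly positive because $X<1$ is preserved by Lemma \ref{bounds in xyz coords}. Thus $a$ is monotone increasing, hence $a(s)\ge a(s_0)>0$ for all $s\ge s_0$ with no Gr\"onwall estimate required, and $(a,b,c,f)$ is recovered as a positive, finite quadruple on all of $[s_0,\infty)$. Writing $\frac{ab}{c}=a\sqrt{Y}\ge a(s_0)\sqrt{\delta}>0$, the integral for $t(s)$ diverges and $(a,b,c,f)$ is forward complete in $t$. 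The ingredient your argument is missing is the monotonicity supplied by the preserved inequality $X<1$, which singles out $a$ — not $c$ or $abc$ — as the scalar whose logarithmic derivative has a definite sign.
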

\begin{proof}
Because $a,b,c,f$ are positive and we have $ a, b < c$, the conditions of Lemma \ref{bounds in xyz coords} are satisfied for some $\kappa$. As they are preserved and we assume that $Y$ stays bounded away from zero the solution $(X,Y,Z)$ is contained in a compact region and is therefore forward complete and positive for all $s$. 
To obtain $(a,b,c,f)$ from $(X,Y,Z)$ we need to make one more integration. With $ a \sqrt{Y}\ ds=dt$ we can reformulate the evolution equation (\ref{ODE system - a}) for $a$ as
\begin{align*}
\frac{d}{ds}\log a
=
\frac{1}{a}\frac{da}{ds}
= \sqrt{Y} \frac{da}{dt}
= Y-X+1.
\end{align*}
We already know that $a$ exists for some  $s_0 = s_0(t_0)$.
Then we recover $a$ by
\begin{align}
\log a(s)
=
\log a(s_0)+\int_{s_0}^s (Y-X+1) d\hat{s}.
\end{align}
Because $X < 1$ is preserved the integrand  
is always positive and hence $a$ is positive and uniformly bounded from below. Because $a, X, Y, Z$ are all positive this gives $(a,b,c,f)$. 
Finally we recover the $t$-parameter as
\begin{align*}
t(s)
=
t(s_0)+\int_{s_0}^s
a \sqrt{Y}\ d\hat{s}.
\end{align*}
We know that $a$ is bounded away from zero and the same is true for $Y$ by assumption. Therefore $t\rightarrow\infty$ as $s\rightarrow \infty$. 
We conclude that $(a,b,c,f)$ extends to a forward complete solution of $(\ref{ODE system})$.
\end{proof}

\begin{remark}
\label{critical points}
To conclude this section we discuss the fixed points of the dynamical system (\ref{ODE system in good coord}) and their geometric interpretation. As we consider solutions with positive coefficients 
we only list fixed points with non-negative coordinates.
These are given by
\begin{align}
(0,0,0), (1,0,0), (0,1,0), (1,1,0), \left(\frac{15-3\sqrt{5}}{10},\frac{3-\sqrt{5}}{2},\frac{3\sqrt{5}-5}{5}\right).
\label{fixed points}
\end{align}
Before we move on to describe these in more detail, we quickly review the theory of \textit{hyperbolic} fixed points. For details we refer to \cite[Chapter 2.7]{Perko}. A fixed point $p$ of a dynamical system $\dot{x}=\Phi(x)$ is called hyperbolic if the real parts of all eigenvalues of the linearisation $d\Phi|_p$ of the system at the fixed point are non-zero. If the system is $n$-dimensional and $d\Phi|_p$ has $k$ eigenvalues with negative real part and $(n-k)$-eigenvalues with positive real part, then there is a $k$-dimensional submanifold, the \textit{stable manifold} at $p$, of trajectories converging towards $p$, and a $(n-k)$-dimensional submanifold, the \textit{unstable manifold} at $p$, of trajectories emanating from $p$. Moreover, by the Hartman--Grobman theorem \cite[Chapter 2.8]{Perko} the dynamical system in a neighbourhood of $p$ is topologically equivalent to the linearised system.

All of the fixed points \eqref{fixed points} are hyperbolic:
\begin{itemize}
\item
$(0,1,0)$ has a 1-dimensional stable manifold and a 2-dimensional unstable manifold.
In Section \ref{section-singular orbit} we describe up to scale a 1-parameter family $\Psi_{\mu}$ of smooth cohomogeneity one Spin(7)-structures with principal orbit $N(1,-1)$ closing smoothly on a $S^5$. The trajectories of the associated solutions originate in this fixed point
and sweep out an open subset of the unstable manifold. Therefore, this fixed point can be thought off as the singular orbit $S^5$.
The Bryant--Salamon $\mathrm{G}_2$ holonomy metric on $\Lambda^2_- \CP{2}$, which we have described in Remark \ref{rem-reduced system}, in $(X,Y,Z)$ coordinates also originates in $(0,1,0)$ and is explicitly given by 
\begin{align}
X(s) =  \frac{e^{4s}}{1+e^{4s}},\quad Y(s)=1, \quad Z(s) =0.
\label{BS-XYZ}
\end{align}
One of the two trajectories in the 1-dimensional stable manifold of $(0,1,0)$ is the explicit solution
\begin{align}
X(s) =  0, \quad Y(s)=\frac{e^{4s}}{1+e^{4s}}, \quad Z(s) =0,
\label{Y-axis}
\end{align}
which emanates from $(0,0,0)$.
\item
The dynamics around $(1,0,0)$ are the same as around $(0,1,0)$ and correspond to the singular orbit $\CP{2}$.
A 1-parameter family $\Upsilon_{\tau}$ of smooth cohomogeneity one Spin(7)-structures with principal orbit $N(1,0)$ closing smoothly on a $\CP{2}$ is described in Section \ref{section-singular orbit}. 
The explicit solution
\begin{align}
\label{BS2-XYZ}
X(s) = 1, \quad Y(s)=  \frac{e^{4s}}{1+e^{4s}}, \quad Z(s) =0,
\end{align}
again can be identified with the Bryant--Salamon $\mathrm{G}_2$ holonomy metric on $\Lambda^2_{-}\CP{2}$.
One of the two trajectories in the 1-dimensional stable manifold is the explicit solution
\begin{align}
X(s) = \frac{e^{4s}}{1+e^{4s}}, \quad Y(s) = 0, \quad Z(s) =0,
\label{X-axis}
\end{align}
which emanates from $(0,0,0)$.
\item
$(0,0,0)$ is a source. Once we have understood how to prove Theorems \ref{theorem1} and \ref{thmB}, at the end of section \ref{section-proofs} we show that we can extract 1-parameter families of Spin(7) holonomy metrics emanating from this source which on one end behave as the families $\Psi_{\mu}$ and $\Upsilon_{\tau}$, but are singular on the other end.
The collapsed limit of these families is the the explicit solution 
\begin{align}
\label{diagonal}
X(s) = Y(s) = \frac{e^{4s}}{1+e^{4s}}, \quad Z(s) =0,
\end{align}
which runs along the diagonal in the unit square of the plane $Z=0$. This
is the singular Bryant--Salamon metric, which corresponds
to the case $C=1$ in Remark \ref{rem-reduced system}.
\item 
$(1,1,0)$ is a sink. 
Geometrically this fixed point has two closely related interpretations.
First, the constant solution corresponding to this fixed point is the $\mathrm{G}_2$ holonomy cone $C(F_3)$ over the homogeneous nearly K\"ahler structure on $F_3$, which corresponds to the case $C=0$ in Remark \ref{rem-reduced system}. In accordance with this, solutions \eqref{BS-XYZ}, \eqref{BS2-XYZ} and \ref{diagonal}, which all are AC $\mathrm{G}_2$ holonomy metrics asymptotic to $C(F_3)$, converge to this fixed point as $s\rightarrow\infty$. Secondly, this fixed point can be interpreted as an ALC end, where the asymptotic model is a circle bundle with fibres of a fixed length over the $\mathrm{G}_2$-cone $C(F_3)$. 
In terms of $(a,b,c,f)$ coordinates we have discussed this asymptotic model in Remark \ref{remark-ALC asymp from coeff}.
In terms of $(X,Y,Z)$ coordinates, in Section \ref{section-ALC} we prove 
that all relevant flow lines which converge to $(1,1,0)$ as $s\rightarrow\infty$ indeed are associated with ALC Spin(7)-structures.
\item
The fixed point 
\begin{align*}
(X_c, Y_c, Z_c) := \left(\frac{15-3\sqrt{5}}{10},\frac{3-\sqrt{5}}{2},\frac{3\sqrt{5}-5}{5}\right)
\approx
(0.83, 0.38, 0.34)
\end{align*}
corresponds to the Spin(7)-cone $C$ over the unique $\SU(3)\times \U(1)$-invariant nearly parallel $\mathrm{G}_2$-structure on $N(1,-1)$.
Indeed, the associated solution of the cone solution described in Remark \eqref{remark- AC asymp from coeff} is this fixed point.
The linearisation of the system (\ref{ODE system in good coord})
at $(X_c, Y_c, Z_c)$
is given by 
\begin{align*}
\begin{pmatrix}
-6+\frac{6}{\sqrt{5}} & 0 & -3+\frac{3}{\sqrt{5}}
\\
0 & -6 +\frac{14}{\sqrt{5}} & -5 +\sqrt{5}
\\
3-\frac{9}{\sqrt{5}} & 3-\frac{9}{\sqrt{5}} & 4-\frac{12}{\sqrt{5}}
\end{pmatrix}.
\end{align*}
The eigenvalues rounded to one digit after the decimal point are
$-4.1, -1.7, 1.4$. By the discussion above there is a 2-dimensional stable manifold and a 1-dimensional unstable manifold.
In Section \ref{section-CS} we construct up to scale two cohomogeneity one Spin(7)-metrics with principal orbit $N(1,-1)$ and an isolated conical singularity modelled on the Spin(7)-cone $C$. The two trajectories of the associated solutions constitute the unstable manifold at $(X_c, Y_c, Z_c)$. Furthermore, we show that the 2-dimensional stable manifold is made up of a 2-parameter family $\Psi^{\mathrm{ac}}_{\alpha,\beta}$ of AC ends.
\end{itemize}
\end{remark}

\section{Local solutions around the singular orbits $S^5$ and $\CP{2}$}
\label{section-singular orbit}

In Remark \ref{rem-bundles} we have explained that $N(k,l)$ is a $L(1,|k+l|)$-bundle over $\CP{2}$. For $N(1,-1)$, the fibre $L(1,0)=S^1\times S^2$ is not a sphere. In particular, there is no cohomogeneity one space with principal orbit $N(1,-1)$ and singular orbit $\CP{2}$ (see Section \ref{intro-coh1} for details). 
However, $N(1,-1)$ is an $S^2$-bundle over the 5-sphere $S^5=\SU(3)/\SU(2)$ (see Remark \ref{rem-bundles}). 
Indeed, the adjoint bundle $M_{S^5}$ of the principal $\SU(2)$-bundle $\SU(3)\rightarrow \SU(3)/\SU(2)$ is a cohomogeneity one space with principal orbit $N(1,-1)$ and singular orbit $S^5$.
The group diagram is given by $\U(1)_{1,-1} \subset \SU(2) \subset \SU(3)$. The extra $U(1)$-factor \eqref{extra-U(1)} also is in the normalizer of $\SU(2)$, and therefore gives a global symmetry of $M_{S^5}$.

As explained in Section \ref{intro-coh1}, we want to approach the construction of Spin(7)-metrics on $M_{S^5}$ by first considering local invariant Spin(7)-structures closing smoothly on the singular orbit $S^5$ and then decide which of these extend to complete Spin(7)-structures to all of $M_{S^5}$. Local cohomogeneity one Spin(7)-structures around the singular orbit have been investigated by Reidegeld \cite{Reidegeld-paper}. He proves

\begin{theorem}\cite[Theorem 6.1]{Reidegeld-paper}
\label{reidegeld-short distance existence}
For any $\mu\in(0,\infty)$ there exists a unique
$\SU(3)\times \U(1)$-invariant  torsion-free Spin(7)-structure $\Psi_{\mu}$ in a neighbourhood of the singular orbit $S^5$ in $M_{S^5}$ with
\begin{align*}
a(0) = 0, \quad b(0) = c(0) = 1, \quad f(0) = \mu.
\end{align*} 
The holonomy of the associated metric is all of Spin(7).
$\Psi_{\mu}$ depends continuously on $\mu$.
\end{theorem}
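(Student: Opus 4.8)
This is \cite[Theorem 6.1]{Reidegeld-paper}; I sketch the strategy. The plan is to treat it as a singular initial value problem on the singular orbit $S^5$, in the spirit of Eschenburg--Wang \cite{eschenburg-wang} and Verdiani--Ziller \cite{Verdiani-Ziller}, with the regularity analysis adapted to Spin(7)-structures as by Reidegeld \cite{reidegeld}. The first step is to translate smoothness of the $\SU(3)\times\U(1)$-invariant Spin(7)-structure \eqref{Spin(7)-structure} --- equivalently of the metric \eqref{Spin(7)-metric} --- over the zero section $S^5$ of the bundle $M_{S^5}=\SU(3)\times_{\SU(2)}\mathbb{R}^3$ into conditions on the Taylor expansions of $(a,b,c,f)$ at $t=0$. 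The collapsing fibre $\SU(2)/\U(1)_{1,-1}\cong S^2$ is tangent to the module $U_1=\mathrm{span}\{E_1,E_2\}$ of \eqref{decomp of tangent space of AW}, while the remaining tangent directions of $S^5=\SU(3)/\SU(2)$ decompose under $\SU(2)$ as the irreducible $U_2\oplus U_3$ together with the trivial $U_4$. Hence smoothness forces $b(0)=c(0)$, forces $b,c,f$ to be even in $t$, and forces $a$ to be odd in $t$ with $\dot a(0)$ a fixed multiple of $b(0)$ (the ``no cone singularity'' condition for the round $S^2$-fibre of radius $\sim t$ inside $\mathbb{R}^3$). After rescaling we may set $b(0)=c(0)=1$, and then \eqref{ODE system - a} automatically gives $\dot a(0)=2$ in agreement with the geometric normalisation; the only surviving free datum is $f(0)=\mu>0$.

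The second step is existence and uniqueness. The right-hand side of \eqref{ODE system} is singular at $t=0$ through the $1/a$ terms, so Picard--Lindel\"of does not apply directly; I would desingularise either by taking $a/t$ as a new unknown, or --- more naturally here --- by passing to the coordinates of Section \ref{section-coord change}. In $(X,Y,Z)$-coordinates the singular orbit becomes the hyperbolic fixed point $(0,1,0)$ of \eqref{ODE system in good coord}, which by Remark \ref{critical points} carries a two-dimensional unstable manifold, its linearisation having eigenvalues $4,2,-4$; by the unstable manifold theorem \cite{Perko} the trajectories emanating from $(0,1,0)$ sweep out this manifold. Computing the $t\to0$ asymptotics as in Remark \ref{associated solutions}, the trajectory tangent to the $\lambda=4$ eigenvector $(1,0,0)$ is the strong unstable one, namely the collapsed limit given by the explicit Bryant--Salamon solution \eqref{BS-XYZ}, whereas a generic trajectory behaves like $X\sim p\,e^{4s}$, $Z\sim\zeta\,e^{2s}$; tracing through the two integrations in the proof of Lemma \ref{lemma-reduction to proj system} (with $t(0)=0$, $b(0)=c(0)=1$) shows $f(0)=\mu=\zeta/\sqrt{p}$, and as the scaling-invariant $p/\zeta^2$ of the flow line ranges over $(0,\infty)$ so does $\mu$. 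Each $\mu\in(0,\infty)$ thus yields a unique local solution $(a,b,c,f)$; a Malgrange-type analytic existence theorem for the desingularised system (equivalently the Eschenburg--Wang lemma \cite{eschenburg-wang}) upgrades the corresponding Spin(7)-structure to a genuinely smooth one on a neighbourhood of $S^5$ meeting the conditions of the first step, and gives real-analytic, hence continuous, dependence of $\Psi_\mu$ on $\mu$.

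Finally, that the holonomy of the induced metric is all of $\Spin(7)$ rather than $\SU(4)$, $\Sp(2)$ or a smaller group follows from \cite[Theorem 4.4]{Reidegeld-paper}, exactly as for the general solutions of \eqref{ODE system}: the metric is irreducible and the $\SU(3)\times\U(1)$-invariant nearly parallel $\mathrm{G}_2$-structure induced on the principal orbits is proper, so there are no extra parallel forms. I expect the first step to be the main obstacle, since identifying the precise $t\to0$ asymptotics dictated by the bundle structure $M_{S^5}=\SU(3)\times_{\SU(2)}\mathbb{R}^3$ requires a somewhat delicate representation-theoretic bookkeeping; once the regularity conditions are pinned down, the existence, uniqueness and parameter dependence are a routine application of singular-initial-value-problem techniques.
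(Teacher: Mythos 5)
This theorem is not proved in the paper; it is cited verbatim from Reidegeld \cite{Reidegeld-paper}, with the asymptotic expansion \eqref{short distance asy exp} quoted but not derived. So the comparison is between your sketch and Reidegeld's argument, for which the paper only provides pointers. With that caveat, there is a concrete error in your first step and a gap in the second.

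The error: you assert that smoothness of the metric over $S^5$ forces $b,c,f$ to be even in $t$ and $a$ odd. But the expansion \eqref{short distance asy exp} given in the paper has $b(t)=1-\tfrac13\mu t+\cdots$ and $c(t)=1+\tfrac13\mu t+\cdots$, with nonvanishing linear terms for $\mu>0$; only $a$ is odd, only $f$ is even, while $b$ and $c$ have no parity (though $b+c$ is even and $b-c$ is odd). The reason your parity heuristic fails is exactly the feature you flagged as delicate: under $\SU(2)$ the summands $U_2$ and $U_3$ are \emph{not} separate irreducibles but together form the irreducible $\mathbb{C}^2$, so the slice-compatibility condition for the metric is a statement about the full $\SU(2)$-invariant quadratic form on $U_2\oplus U_3$, and it couples the ``diagonal'' coefficients $b^2,c^2$ with the would-be off-diagonal part. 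The correct condition is that $b^2+c^2$ and $bc$ (equivalently the eigenvalue pair) admit the right parity, not that $b$ and $c$ separately be even. If you impose $b,c$ even you solve a more rigid problem than the true one, and the uniqueness statement you would get would be for the wrong initial value problem.

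The gap: your alternative route via the unstable manifold of the fixed point $(0,1,0)$ of \eqref{ODE system in good coord} does not by itself yield smooth closure over the singular orbit. Passing to $(X,Y,Z)$ and reparametrising $dt=(ab/c)\,ds$ discards the overall scale and the rate at which the principal orbit collapses, which is precisely the data governing whether the metric closes without a cone angle on the $S^2$-fibre. Consistently with this, the paper is careful to say (Remark \ref{critical points}) that the $\Psi_\mu$-trajectories sweep out only an \emph{open subset} of the unstable manifold, not all of it. To get existence, uniqueness, and continuous dependence on $\mu$ one really has to do what you describe as the fallback: work with the singular IVP in $(a,b,c,f)$ directly and adapt the Eschenburg--Wang / Malgrange-type iteration, as Reidegeld does. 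As the paper itself notes in Section \ref{intro-coh1}, a simplifying hypothesis in Eschenburg--Wang fails in the special-holonomy setting and the iteration requires modification; so the last sentence of your second paragraph is correct in spirit but elides the part of the argument that is not routine. The holonomy statement via \cite[Theorem 4.4]{Reidegeld-paper} is fine.
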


The asymptotic expansion of $\Psi_{\mu}$ is given by
\begin{subequations}
\label{short distance asy exp}
\begin{align}
a(t) =& 2t - \frac{4}{27}(9  - \mu^2) t^3
+ \mathcal{O}(t^5),
\\
b(t) =& 
1  -\frac{1}{3}\mu t
+
\left(
1-\frac{5}{18}\mu^2
\right)
t^2
+
\frac{1}{810}\mu(126 -167 \mu^2)t^3
+
\mathcal{O}(t^4),
\\
c(t) =& 
1  +\frac{1}{3}\mu t
+
\left(
1-\frac{5}{18}\mu^2
\right)
t^2
-
\frac{1}{810}\mu(126 -167 \mu^2)t^3
+\mathcal{O}(t^4),
\\
f(t)
=&
\mu + \frac{2}{3}\mu^3 t^2 + \mathcal{O}(t^4).
\end{align}
\end{subequations}
In the coordinates $(X,Y,Z)$ the short-distance asymptotic expansion takes the form
\begin{subequations}
\label{short distance asy exp in proj coord}
\begin{align}
X(t)
&=
4t^2-\frac{8}{3}\mu t^3+\mathcal{O}(t^4),
\\
Y(t)
&=
1-\frac{4}{3}\mu t+\frac{8}{9}\mu^2 t^2-\frac{8}{405}\mu(83\mu^2-99)t^3+\mathcal{O}(t^4),
\\
Z(t)
&=
2\mu t-\frac{8}{3}\mu^2 t^2+\frac{4}{27}\mu(31\mu^2-36)t^3+\mathcal{O}(t^4).
\end{align}
\end{subequations}

\begin{remark}
\label{mu=0}
$\mu=0$ gives the Bryant--Salamon AC $\mathrm{G}_2$ holonomy metric on $\Lambda^2_{-}\CP{2}$ with $f\equiv 0$ and $b\equiv c$ described in Remark \ref{rem-reduced system}. The continuous dependence of the functions $(a,b,c,f)$ on $\mu$ extends to $\mu=0$.
\end{remark}

\begin{remark}
\label{range of s for S5 sol}
By Remark \ref{associated solutions} each $\Psi_{\mu}$ gives rise to an associated solution of the system (\ref{ODE system in good coord}). By abuse of notation we will denote them by the same symbol $\Psi_{\mu}$. Let us determine the range of parameters $s$ for which these are defined. 
It follows from the asymptotic expansion 
(\ref{short distance asy exp}) that
we can find a positive constant  $C$ and a small time $t_0$ such that for all $t\in(0,t_0)$
\begin{align*}
C t^{-1} < \frac{c}{ab}.
\end{align*}
Set $s(t_0)= s_0$ where $s_0$ is an arbitrary constant of integration. Then
\begin{align*}
s(t) = - \int_{t}^{t_0} \frac{c}{ab} d\hat{t}+s(t_0)
< 
-C \int_t^{t_0} \hat{t}^{-1} d\hat{t} + s(t_0)
=
C \log(t) -C \log(t_0) + s(t_0)
.
\end{align*}
Hence $s\rightarrow -\infty$ as $t\rightarrow 0$.
Therefore, there exists some $S\in\mathbb{R}$ such that 
$\Psi_{\mu}$ is defined for $s\in(-\infty,S)$.
As $s\rightarrow -\infty$, for each $\mu$ the solution $(X, Y, Z)$ converges to the fixed point $(0, 1, 0)$. Hence this fixed point corresponds to a singular orbit $S^5$.
\end{remark}

To use $\CP{2}$ as the singular orbit, we need to use $N(1,0)$ instead of $N(1,-1)$ as the principal orbit. Indeed, by Remark \ref{rem-bundles} $N(1,0)$ is a $L(1,1)=S^3$ bundle over $\CP{2}$, and the universal quotient bundle $M_{\CP{2}}$ is a cohomogeneity one space with principal orbit $N(1,0)$ and singular orbit $\CP{2}$ (see \cite{Gukov-Sparks-Tong}).
The extra $\U(1)$-factor \eqref{extra-U(1)} also is a subgroup of the normalizer of $U(1)_{1,0}$ and of the isotropy group of $\CP{2}$. Therefore, the extra symmetry from Remark \ref{remark-diagonal-metric} acts globally on $M_{\CP{2}}$.

As mentioned in Remark \ref{princ-orbit-N10}, an $\SU(3)$-invariant torsion-free Spin(7)-structure with principal orbit $N(1,0)$ is still characterised as a solution of the system (\ref{ODE system}). We only need to swap the roles of $a$ and $b$ in the discussion of smooth extension over the singular orbit. Taking this into account, Reidegeld proves

\begin{theorem}\cite[Theorem 7.1]{Reidegeld-paper}
\label{reidegeld-short distance existence-CP2}
For any $\tau\in\mathbb{R}$ there exists a unique
$\SU(3)\times \U(1)$-invariant  torsion-free Spin(7)-structure $\Upsilon_{\tau}$  in a neighbourhood of the singular orbit $\CP{2}$
in $M_{\CP{2}}$ with the asymptotic expansion
\begin{subequations}
\label{short-distance-asy-exp-cp2}
\begin{align}
a(t) 
&=
1 + \frac{2}{3} t^2 + \frac{-104-\tau}{288} t^4 + \mathcal{O}(t^5),
\\
b(t)
&=
t-\frac{12+\tau}{24} t^3 +\mathcal{O}(t^5),
\\
c(t)
&=
1+\frac{5}{6}t^2 +\frac{-140+\tau}{288}t^4 + \mathcal{O}(t^5),
\\
f(t)
&=
t+\frac{\tau}{12}t^3+\mathcal{O}(t^5).
\end{align}
\end{subequations}
The holonomy of the associated metric is all of Spin(7).
$\Upsilon_{\tau}$ depends continuously on $\tau$.
\end{theorem}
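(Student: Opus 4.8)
The plan is to treat this as a \emph{singular initial value problem} at the singular orbit $\CP{2}$, following the Eschenburg--Wang scheme as adapted to special holonomy (this is how Reidegeld proceeds). First I would extract the conditions for smooth closure forced by the bundle structure $M_{\CP{2}} = \SU(3)\times_K V$ with $K = \SU(2)\times\U(1)$ and $V = \mathbb{C}^3/l \cong \mathbb{C}^2$. The collapsing fibre sphere is $K/H = L(1,1) = S^3$, with tangent directions $U_2\oplus U_4 = \mathrm{span}\{E_3, E_4, E_7\}$; so in the system \eqref{ODE system}, read with the roles of $a$ and $b$ exchanged as in Remark \ref{princ-orbit-N10}, the coefficients $b$ and $f$ of these directions must vanish linearly, $b(t), f(t) = t + O(t^3)$, and be odd in $t$ (the collapsing $S^3$ is round to leading order), while the coefficients $a, c$ of the $\CP{2}$-directions $U_1 \oplus U_3$ must be even with $a(0), c(0) > 0$. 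Fixing the scale by $a(0) = 1$, the leading-order torsion-free equations then force $c(0) = 1$, so the entire leading behaviour is rigid and any freedom can live only in higher Taylor coefficients.

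Next I would desingularise: writing $a = 1 + t^2\hat a$, $c = 1 + t^2\hat c$, $b = t(1 + t^2\hat b)$, $f = t(1 + t^2\hat f)$ and keeping $t$ itself as the independent variable (the rescaled parameter $s$ of Remark \ref{associated solutions} runs to $-\infty$ here and is not used), the system \eqref{ODE system} becomes a regular-singular system $t\dot v = L v + t\,Q(t,v)$ for $v = (\hat a, \hat b, \hat c, \hat f)$ with $L$ a constant matrix and $Q$ analytic; in particular the only terms of \eqref{ODE system} that could be genuinely singular, $f/b^2$ and $f/c^2$, become analytic under this substitution. Solving order by order, the coefficient of $t^k$ is determined by $(kI - L)$ together with the lower-order data unless $k$ is an eigenvalue of $L$. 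The indicial analysis should produce exactly one positive-integer eigenvalue of $L$ with a one-dimensional eigenspace --- governing the $t^3$-coefficients of $b$ and $f$ --- and no logarithmic resonance, so that for each $\tau\in\mathbb{R}$ there is a unique formal power-series solution; direct computation then reproduces the expansion \eqref{short-distance-asy-exp-cp2}.

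Finally I would promote the formal solution to a genuine one: truncate at a high order $N$, set $v = v_{\le N} + r$, and solve the remainder equation for $r$ by a contraction mapping in a weighted space $\{\,\sup_{(0,\varepsilon)} t^{-(N+1)}|r| < \infty\,\}$ for $\varepsilon$ small, the contraction constant being small because of the extra factor of $t$ multiplying the nonlinearity (alternatively one invokes directly the existence theorem for Fuchsian singular IVPs once the spectral hypotheses are verified). The fixed point depends continuously --- indeed real-analytically --- on $\tau$, which gives the stated continuous dependence; uniqueness within the class of $\SU(3)\times\U(1)$-invariant torsion-free Spin(7)-structures smooth at $\CP{2}$ follows because any such structure has the forced formal expansion and hence agrees with $\Upsilon_\tau$ by uniqueness in the fixed-point argument (using also Remark \ref{conn-comp-Spin(7)}, which identifies $\psi$ as the only candidate invariant Spin(7)-structure). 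That the holonomy is all of $\Spin(7)$ is \cite[Theorem 4.4]{Reidegeld-paper}. The main obstacle is the indicial bookkeeping in the middle step: one must check that $L$ has \emph{no other} positive-integer eigenvalues and that the single resonance is unobstructed, since either failure would change the dimension of the solution family away from one or force $\log t$ terms into the expansion --- it is exactly this rigidity that makes the parameter space come out to be $\tau\in\mathbb{R}$.
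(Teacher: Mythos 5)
The paper does not contain its own proof of this theorem: it is imported verbatim from Reidegeld's paper (the citation is part of the statement), and the same is true of the companion result for the $S^5$ orbit (Theorem \ref{reidegeld-short distance existence}). So there is no internal argument to compare against. What I can say is that your proposal reconstructs exactly the method Reidegeld uses and that this paper itself deploys for the parallel singular initial value problems in Proposition \ref{CS and AC ends} via the Picard/Fuchsian Theorem \ref{singular IVP for ends}: identify the smooth-closure constraints at the singular orbit, desingularise to a regular-singular system $t\dot v = Lv + tQ(t,v)$, do the indicial analysis to count free parameters, then promote the formal solution with a contraction argument. Your structure, including the observation that the leading behaviour is rigid and that the one free parameter enters at the $t^3$-coefficient of $b$ and $f$, matches the stated expansion, and your verification that $c(0)=1$ is forced (from $\dot a = (b^2+c^2-a^2)/(bc)$ with $b\sim t$ and $a$ even) is correct.

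Two caveats worth recording. First, the smooth-closure conditions at a singular orbit with a collapsing $S^3$ are in general somewhat stronger than ``the $S^3$ is round to leading order and the collapsing coefficients are odd''; Eschenburg--Wang and Verdiani--Ziller give the precise conditions, which for a sphere that is a circle bundle over an $S^2$ involve additional compatibilities between the Hopf-fibre and base coefficients in higher Taylor orders. Here those conditions do reduce to $b(t),f(t)=t+\mathcal{O}(t^3)$ odd and $a,c$ even with $a(0)=c(0)$, so your conclusion stands, but the justification as written is a little quicker than what is actually required. Second, your identification of the collapsing directions as $U_2\oplus U_4=\mathrm{span}\{E_3,E_4,E_7\}$ has to be read through the $a\leftrightarrow b$ relabelling of Remark \ref{princ-orbit-N10} together with the Weyl-group conjugation that identifies $N(1,0)$ with $N(1,-1)$; the net effect is consistent with the coefficients $b,f$ vanishing linearly, which is what matters, but the intermediate bookkeeping in your sketch is not quite self-contained and would need to be spelled out against Reidegeld's conventions for $N(1,0)$.
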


In $(X,Y,Z)$ coordinates the short-distance expansion takes the form
\begin{subequations}
\label{short-dist-asy-xyz-cp2}
\begin{align}
X(t) &= 1-\frac{1}{3} t^2 - \frac{-40+\tau}{72} t^4 + \mathcal{O}(t^5),
\\
Y(t) &= t^2 - \frac{32+\tau}{12}t^4 +\mathcal{O}(t^5),
\\
Z(t) &= t^2 + \frac{-56+\tau}{24} t^4 + \mathcal{O}(t^5).
\end{align}
\end{subequations}

\begin{remark}
\label{remark-Upsilon-assoc-sol}
Using the asymptotic expansion \eqref{short-distance-asy-exp-cp2}, as in Remark \ref{range of s for S5 sol} we can show that for every $\tau\in\mathbb{R}$ there exists some $S>0$  such that the solution of the system \ref{ODE system in good coord} associated with $\Upsilon_{\tau}$ is defined for $s\in(-\infty,S)$. As $s\rightarrow -\infty$, the solution $(X, Y, Z)$ converges to the fixed point $(1,0,0)$. Hence this fixed point corresponds to the singular orbit $\CP{2}$.
\end{remark}

\begin{remark}
\label{Remark-scaling}
Scaling a Ricci-flat metric by a non-zero positive constant gives another Ricci-flat metric. In the situation of the Spin(7)-structure \eqref{Spin(7)-reduction-G2}, replacing the Spin(7)-form
$\psi=dt\wedge \varphi+ *\varphi$ by $\hat{\psi}=\kappa^4 \psi$ scales the associated metric to $\kappa^2 g= \kappa^2 dt^2 + \kappa^2 h$. Now $\hat{t}=\kappa t$
is the arc-length parameter meeting the principal orbits orthogonally. The scaled Spin(7)-structure $\hat{\psi}$ is represented by the coefficient functions $(\hat{a}(t),\hat{b}(t), \hat{c}(t),\hat{f}(t))=(\kappa\ a(t/\kappa), \kappa\ b(t/\kappa), \kappa\ c(t/\kappa), \kappa\ f(t/\kappa))$.
We are only interested in solutions to (\ref{ODE system}) up to scale. In Theorem \ref{reidegeld-short distance existence} we chose the scale for the family $\Psi_{\mu}$ such that $b(0) = 1$,
and in Theorem \ref{reidegeld-short distance existence-CP2} we chose the scale for the family $\Upsilon_{\tau}$ such that $a(0)=1$.
\end{remark}

\section{Conically singular and asymptotically conical ends}

\label{section-CS}

In this section we will construct families of local $\SU(3)\times \U(1)$-invariant CS and AC Spin(7)-metrics with principal orbit $N(1,-1)$. In both cases the asymptotic cone is the cone over the
unique $\SU(3)\times \U(1)$-invariant nearly parallel $\mathrm{G}_2$-structure on $N(1,-1)$. 
As in \cite{FHN2} this will be achieved by considering a singular initial value problem around the conical singularity in the CS case and at infinity of the asymptotic cone in the AC case. 
The following statement can be found in \cite[Theorem 5.1]{FHN2}.
A proof can be found in Picard's treatise \cite[Chapter I, \S 13]{Picard}.

\begin{theorem}
\label{singular IVP for ends}
Consider the singular initial value problem 
\begin{align}
t \dot{y}=\Phi(y),
\quad
y(0)=y_0,
\label{singular IVP for ends-eq}
\end{align}
where $y$ takes values in $\mathbb{R}^k$ and $\Phi \colon: \mathbb{R}^k \rightarrow \mathbb{R}^k$ is a real analytic function in a neighbourhood of $y_0$
with $\Phi(y_0)=0$. After possibly a change of basis,
assume that $d\Phi|_{y_0}$ contains a diagonal block $\mathrm{diag}(\lambda_1,\dots, \lambda_m)$ in the upper-left corner. Furthermore assume that the eigenvalues $\lambda_1, \dots,\lambda_m$ satisfy:
\begin{compactenum}[(i)]
\item
\label{singular IVP for ends - pos cond}
$\lambda_1, \dots,\lambda_m > 0;$
\item
\label{singular IVP for ends - nonresonance cond}
for every $\boldsymbol{h}=(h_1,\dots, h_m)\in \mathbb{Z}_{\geq 0}^{m}$
with $|\boldsymbol{h}| = h_1 + \dots + h_m \geq 2$ the matrix
\begin{align*}
(\boldsymbol{h}\cdot \boldsymbol{\lambda})\mathrm{Id}-d\Phi|_{y_0}
\end{align*}
is invertible. Here $\boldsymbol{\lambda}=(\lambda_1,\dots, \lambda_m)$ and $\boldsymbol{h} \cdot \boldsymbol{\lambda}=\sum_{i=1}^m h_i \lambda_i$.
\end{compactenum}
Then for every $(u_1, \dots, u_m)\in\mathbb{R}^m$ there exists a unique solution $y(t)$ of (\ref{singular IVP for ends-eq}) given as a convergent generalised power series
\begin{align*}
y(t)=
y_0
+
(u_1 t^{\lambda_1}, \dots, u_m t^{\lambda_m}, 0 \dots 0)
+
\sum_{|\boldsymbol{h}|\geq 2}
y_{\boldsymbol{h}} t^{\boldsymbol{h}\cdot \boldsymbol{\lambda}}.
\end{align*}
Furthermore, the solutions depend real analytically 
on $u_1, \dots , u_m$.
\end{theorem}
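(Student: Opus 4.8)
The plan is to follow the classical route of Picard: build the solution coefficient by coefficient as a formal generalised power series, prove its convergence by the method of majorants, and read off the analytic dependence on the free parameters from the uniformity of the majorant bounds. I indicate below how this adapts to the block structure of $d\Phi|_{y_0}$ and to the non-integer exponents.

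After the translation $y\mapsto y-y_0$ we may assume $y_0=0$ and write $\Phi(y)=Ly+N(y)$ with $L=d\Phi|_0$ and $N$ real analytic near $0$, $N(0)=0$, $dN|_0=0$. Let $\Lambda=\mathbb{Z}_{\geq 0}\lambda_1+\dots+\mathbb{Z}_{\geq 0}\lambda_m\subset(0,\infty)$; since every $\lambda_i>0$, each set $\{\mathbf{h}\in\mathbb{Z}_{\geq 0}^{m}:\mathbf{h}\cdot\boldsymbol\lambda\leq\sigma\}$ is finite, so an induction on the value $\mathbf{h}\cdot\boldsymbol\lambda$ is well-founded. We seek $y(t)=\sum_{|\mathbf{h}|\geq 1}y_{\mathbf{h}}\,t^{\mathbf{h}\cdot\boldsymbol\lambda}$ with $y_{\mathbf{h}}\in\mathbb{R}^k$. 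Substituting into $t\dot y=Ly+N(y)$ and collecting the coefficient of $t^{\mathbf{h}\cdot\boldsymbol\lambda}$ gives: for $\mathbf{h}=e_i$ the relation $\lambda_i y_{e_i}=Ly_{e_i}$, which forces $y_{e_i}$ into the $\lambda_i$-eigenspace of $L$; with the assumed normal form this eigenspace is $\mathbb{R}e_i$, so $y_{e_i}=u_i e_i$ with $u_i\in\mathbb{R}$ free. For $|\mathbf{h}|\geq 2$ one gets
\[
\bigl((\mathbf{h}\cdot\boldsymbol\lambda)\,\mathrm{Id}-L\bigr)\,y_{\mathbf{h}}=\bigl[N(y)\bigr]_{\mathbf{h}},
\]
whose right-hand side is a fixed polynomial in the coefficients $y_{\mathbf{h}'}$ with $|\mathbf{h}'|<|\mathbf{h}|$, because $N$ vanishes to second order (every monomial of $N(y)$ of multidegree $\mathbf{h}$ uses at least two factors $y_{\mathbf{h}'}$). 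Hypothesis \eqref{singular IVP for ends - nonresonance cond} says the matrix on the left is invertible, so $y_{\mathbf{h}}$ is determined uniquely; this produces a unique formal solution for each $(u_1,\dots,u_m)$, and an easy induction shows every $y_{\mathbf{h}}$ is a polynomial in $(u_1,\dots,u_m)$.

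For convergence, note first that $\mathbf{h}\cdot\boldsymbol\lambda\to\infty$ as $|\mathbf{h}|\to\infty$, hence $\|((\mathbf{h}\cdot\boldsymbol\lambda)\mathrm{Id}-L)^{-1}\|\to 0$; combined with invertibility for each fixed $\mathbf{h}$, this yields a uniform bound $\|((\mathbf{h}\cdot\boldsymbol\lambda)\mathrm{Id}-L)^{-1}\|\leq C_0$ over all $|\mathbf{h}|\geq 2$. Complexify $\Phi$ and choose an analytic majorant $\widetilde N$ of $N$ on a polydisc (Taylor coefficients real, nonnegative, dominating the moduli of those of $N$), and set $M(\eta)=\sum_j\widetilde N_j(\eta,\dots,\eta)$. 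Define $\eta_{\mathbf{h}}\geq 0$ by $\eta_{e_i}=|u_i|$ and the scalar recursion obtained from the one above by replacing $((\mathbf{h}\cdot\boldsymbol\lambda)\mathrm{Id}-L)^{-1}$ with $C_0$ and $N$ with $M$; an induction gives $\|y_{\mathbf{h}}\|\leq\eta_{\mathbf{h}}$. The series $\eta(\xi_1,\dots,\xi_m)=\sum\eta_{\mathbf{h}}\,\xi^{\mathbf{h}}$ is then the power series of a solution of the \emph{regular} (non-singular) analytic fixed-point equation $\eta=C_0\,M(\eta)+\sum_i|u_i|\,\xi_i$, which by the holomorphic implicit function theorem is holomorphic in $(\xi_1,\dots,\xi_m)$ near the origin. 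Setting $\xi_i=t^{\lambda_i}$ shows the majorant, hence the original series, converges for small $t>0$; summing the defining identity shows its sum solves \eqref{singular IVP for ends-eq}. Finally, $C_0$ and $\widetilde N$ are independent of $(u_1,\dots,u_m)$ and $\eta_{\mathbf{h}}$ depends polynomially (and monotonically) on $(|u_1|,\dots,|u_m|)$, so for $(u_1,\dots,u_m)$ in any fixed compact set the series $\sum y_{\mathbf{h}}(u)\,t^{\mathbf{h}\cdot\boldsymbol\lambda}$ converges uniformly for $t$ small, and a locally uniform limit of polynomials in $u$ is real analytic in $u$; this gives the stated analytic dependence.

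I expect the convergence step to be the main obstacle: getting the uniform resolvent bound $C_0$ is routine, but organising the majorant comparison so that the dominating scalar series is visibly the expansion of an honest analytic function — which is where the discreteness of the semigroup $\Lambda$ and the reduction from a singular to a regular implicit equation are used — requires care. Uniqueness within the class of convergent generalised power series is built into the recursion; uniqueness among all solutions with the prescribed leading term $(u_1t^{\lambda_1},\dots,u_mt^{\lambda_m},0,\dots,0)$ follows by applying the same recursion to the difference of two such solutions and showing its coefficients vanish.
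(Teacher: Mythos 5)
The paper does not provide its own proof of this statement; it defers to \cite[Theorem 5.1]{FHN2} and Picard's treatise. Your proposal reconstructs precisely that classical route: formal power series in the auxiliary variables $\xi_i=t^{\lambda_i}$, coefficient recursion made well-posed by the non-resonance hypothesis, a uniform resolvent bound for $((\mathbf{h}\cdot\boldsymbol\lambda)\mathrm{Id}-L)^{-1}$ from $\mathbf{h}\cdot\boldsymbol\lambda\to\infty$, a majorant series controlled by a \emph{regular} holomorphic fixed-point equation, and analytic dependence on $(u_1,\dots,u_m)$ from uniformity of the majorant bounds. This is the argument the paper intends by citing Picard, and all the essential steps are in place.

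One imprecision is worth flagging, though it does not damage the proof. You assert that ``with the assumed normal form this eigenspace is $\mathbb{R}e_i$,'' but the hypothesis only says the upper-left block of $d\Phi|_{y_0}$ is $\mathrm{diag}(\lambda_1,\dots,\lambda_m)$; this makes each $e_i$ an eigenvector for $\lambda_i$, but the full $\lambda_i$-eigenspace of $L$ can be larger (for instance if two $\lambda_i$ coincide, or if $\lambda_i$ is also an eigenvalue of the complementary block). The theorem does not need the stronger claim: the leading term $(u_1 t^{\lambda_1},\dots,u_m t^{\lambda_m},0,\dots,0)$ is \emph{prescribed} by the statement, and you only need to check that $y_{e_i}=u_i e_i$ is compatible with the order-one equation (it is, since $Le_i=\lambda_i e_i$) and that the higher coefficients are then uniquely determined by the recursion, which is exactly what you prove. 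So you should present $y_{e_i}=u_i e_i$ as the Ansatz fixed by the stated form of the solution rather than as a consequence of a spectral-multiplicity count.
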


In the following, denote by 
$\nu_1, \nu_2, \nu_3$ the ordered roots of the 
cubic equation
\begin{align*}
x^3+8x^2-4x-60=0.
\end{align*}
The numerical values, rounded to two digits after the decimal point, are given by
\begin{align}
\label{num-value-eigenvalues}
\nu_0 \approx -7.46,\quad \nu_1 \approx -3.12,\quad \nu_2 \approx 2.58.
\end{align}

\begin{proposition}
\label{CS and AC ends}
Let $C$ be the Spin(7)-holonomy cone over $N(1,-1)$.
\begin{compactenum}[(i)]
\item 
For every $\lambda\in \mathbb{R}$ there is some $\varepsilon > 0$ such that on $(0,\varepsilon)\times N(1,-1)$ there exists a torsion-free CS Spin(7)-structure $\Psi^{\mathrm{cs}}_{\lambda}$
asymptotic to $C$ 
which has the asymptotic expansion
\begin{subequations}
\label{short time asymp for CS sol}
\begin{align}
\frac{\sqrt{5}}{2} t^{-1}a(t)
&
\approx 
1- 0.25 \lambda\ t^{\nu_2}+\mathcal{O}(t^{2\nu_2}),
\\
\sqrt{\frac{15}{2(5-\sqrt{5})}}t^{-1}b(t) 
&
\approx 
1-4.84 \lambda\ t^{\nu_2}+\mathcal{O}(t^{2\nu_2}),
\\
\sqrt{\frac{15}{2(5+\sqrt{5})}}t^{-1}c(t) 
&
\approx
1+0.09 \lambda\ t^{\nu_2}+\mathcal{O}(t^{2\nu_2}),
\\
\frac{3\sqrt{5}}{4}t^{-1}f(t) 
&
\approx 
1+10 \lambda\ t^{\nu_2}+\mathcal{O}(t^{2\nu_2}).
\end{align}
\end{subequations}
Here all coefficients have been rounded to two digits after the decimal point.
\item
For every $(\alpha,\beta)\in\mathbb{R}^2$ there is some $T > 0$ such that on $(T,\infty)\times N(1,-1)$ there exists a torsion-free AC Spin(7)-structure $\Psi^{\mathrm{ac}}_{\alpha,\beta}$ asymptotic to $C$ which has the asymptotic expansion
\begin{subequations}
\label{asy-exp-AC-ends}
\begin{align}
\frac{\sqrt{5}}{2} t^{-1}a(t)
&
\approx 
1- 10.6\alpha\ t^{\nu_1}+3.6 \beta\ t^{\nu_0} +\sum_{k,l\geq 0, k+l\geq 2} a_{kl}\, t^{k\nu_1 + l \nu_0},
\\
\sqrt{\frac{15}{2(5-\sqrt{5})}}t^{-1}b(t) 
&
\approx 
1+10.8\alpha\ t^{\nu_1}+0.8\beta\ t^{\nu_0}
+\sum_{k,l\geq 0, k+l\geq 2} b_{kl}\, t^{k\nu_1 + l \nu_0}
,
\\
\sqrt{\frac{15}{2(5+\sqrt{5})}}t^{-1}c(t) 
&
\approx
1-5.1\alpha\ t^{\nu_1}-4.8\beta\ t^{\nu_0}
+\sum_{k,l\geq 0, k+l\geq 2} c_{kl}\, t^{k\nu_1 + l \nu_0}
,
\\
\frac{3\sqrt{5}}{4}t^{-1}f(t) 
&
\approx
1+10\alpha\ t^{\nu_1}+ \beta\ t^{\nu_0}
+\sum_{k,l\geq 0, k+l\geq 2} f_{kl}\, t^{k\nu_1 + l \nu_0}
.
\end{align}
\end{subequations}
Here the leading coefficients have been rounded to one digit after the decimal point and the higher coefficients $a_{kl}, b_{kl}, c_{kl}, f_{kl}$ are determined by $(\alpha,\beta)$.
If $\alpha = 0$, $\Psi^{\mathrm{ac}}_{\alpha,\beta}$ has decay rate $\nu_0$, otherwise it has decay rate $\nu_1$.
\end{compactenum}
\end{proposition}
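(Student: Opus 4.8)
\medskip

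\noindent\emph{Proof proposal.} The plan is to recast \eqref{ODE system} as a singular initial value problem of the type covered by Theorem~\ref{singular IVP for ends} and to read both families off from that theorem. First I would pass to the scale-invariant coordinates $\widehat y=(a/t,\,b/t,\,c/t,\,f/t)$; writing $a=t\widehat a$ etc.\ turns \eqref{ODE system} into an autonomous system $t\,\dot{\widehat y}=\Phi(\widehat y)$, where $\Phi$ is a rational vector field that is real analytic wherever the coordinates are positive, and by Remark~\ref{remark- AC asymp from coeff} the $\SU(3)\times\U(1)$-invariant $\Spin(7)$-cone $C$ over $N(1,-1)$ corresponds exactly to the fixed point $y_0=(a_c,b_c,c_c,f_c)$, so $\Phi(y_0)=0$. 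For part~(i) I work near $t=0$ directly; for part~(ii) I substitute $\tau=1/t$ to obtain $\tau\,\tfrac{d\widehat y}{d\tau}=-\Phi(\widehat y)$, again a singular IVP at $\tau=0$ with the same fixed point.

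The heart of the matter is the linearisation $d\Phi|_{y_0}$. I would compute it explicitly over $\mathbb{Q}(\sqrt5)$ and check that its characteristic polynomial factors as $(x+1)\,(x^3+8x^2-4x-60)$. The eigenvalue $-1$ has eigenvector $y_0$: the cone identities $b_c^2+c_c^2-a_c^2=a_cb_cc_c$ etc.\ give $\Phi(\rho y_0)=(1-\rho)y_0$, so the ray $\mathbb{R}_{>0}\cdot y_0$ is flow-invariant and $d\Phi|_{y_0}(y_0)=-y_0$; this direction is the pure rescaling of the cone. The remaining three eigenvalues are the ordered roots $\nu_0<\nu_1<0<\nu_2$ of the cubic, with numerical values \eqref{num-value-eigenvalues}; as a consistency check they coincide, after multiplication by the factor $\tfrac{5+\sqrt5}{4}$ relating $d(\log t)$ to $ds$ via $dt=\tfrac{ab}{c}\,ds$ near the cone, with the eigenvalues of the linearisation of \eqref{ODE system in good coord} at $(X_c,Y_c,Z_c)$ recorded in Remark~\ref{critical points}. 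In particular $\nu_2$ is the unique positive eigenvalue of $d\Phi|_{y_0}$, and a direct check against \eqref{num-value-eigenvalues} shows that the non-resonance condition of Theorem~\ref{singular IVP for ends}(ii) holds in both applications below: for the CS block $(\nu_2)$ one needs $h\nu_2\notin\{\nu_0,\nu_1,\nu_2,-1\}$ for $h\ge 2$, which is clear since $h\nu_2\ge 2\nu_2>\nu_2>0$; for the AC block $\mathrm{diag}(-\nu_1,-\nu_0)$ one needs the exponents $-h_1\nu_1-h_2\nu_0$ $(h_1+h_2\ge 2)$, which are $\ge -2\nu_1$, to avoid the eigenvalues $-\nu_0,-\nu_1,-\nu_2,1$ of $-d\Phi|_{y_0}$, and the only non-obvious case $-h_1\nu_1-h_2\nu_0=-\nu_0$ is excluded because $\nu_0/\nu_1$ is not an integer.

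Now I would apply Theorem~\ref{singular IVP for ends}. For part~(i) take $m=1$ with the block $(\nu_2)$; the theorem produces a one-parameter family $\widehat y(t)=y_0+u\,v_{\nu_2}\,t^{\nu_2}+\sum_{h\ge 2}y_h\,t^{h\nu_2}$ depending real-analytically on $u$, where $v_{\nu_2}$ is the $\nu_2$-eigenvector; rescaling $u$ to a convenient multiple $\lambda$ and dividing the components of $v_{\nu_2}$ componentwise by $y_0$ yields \eqref{short time asymp for CS sol} with the stated leading coefficients. For part~(ii) take $m=2$ with the block $\mathrm{diag}(-\nu_1,-\nu_0)$; this is legitimate even though $-d\Phi|_{y_0}$ has the further positive eigenvalue $1$, precisely because the resonance $-h_1\nu_1-h_2\nu_0=1$ does not occur, so the scaling direction is never excited and no $t^{-1}$ term appears. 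The theorem then gives a two-parameter family $\widehat y(t)=y_0+\alpha\,v_{-\nu_1}\,t^{\nu_1}+\beta\,v_{-\nu_0}\,t^{\nu_0}+\sum_{k,l\ge 0,\ k+l\ge 2}y_{kl}\,t^{k\nu_1+l\nu_0}$, real-analytic in $(\alpha,\beta)$, with leading coefficients read off from the eigenvectors as in~(i) and decay rate $\nu_1$ if $\alpha\ne 0$ and $\nu_0$ if $\alpha=0$; this is \eqref{asy-exp-AC-ends}. In both parts $a,b,c,f$ remain positive for $t$ near $0$ (resp.\ near $\infty$) after shrinking $\varepsilon$ (resp.\ enlarging $T$), so $\Psi^{\mathrm{cs}}_\lambda$ (resp.\ $\Psi^{\mathrm{ac}}_{\alpha,\beta}$) is a genuine Spin(7)-structure of the form \eqref{Spin(7)-structure}, which is $\SU(3)\times\U(1)$-invariant and torsion-free with holonomy equal to $\Spin(7)$ by the proposition identifying solutions of \eqref{ODE system} with such structures. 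Finally, the generalised power-series form of $\widehat y-y_0$, together with the radial weight $t^4$ of $\psi_C$ on the cone, shows term by term that $|\nabla_C^j(F^*\psi-\psi_C)|_{g_C}=\mathcal O(t^{\nu_2-j})$ as $t\to 0$ (resp.\ $\mathcal O(t^{\nu_1-j})$ or $\mathcal O(t^{\nu_0-j})$ as $t\to\infty$), which is the CS (resp.\ AC) condition of Definition~\ref{def-Spin(7)-cone} (resp.\ Definition~\ref{def-AC}) with the asserted rate.

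The main obstacle is the explicit linear-algebra step: writing out $d\Phi|_{y_0}$ over $\mathbb{Q}(\sqrt5)$, confirming the factorisation of its characteristic polynomial, and extracting $v_{\nu_2}$, $v_{-\nu_1}$, $v_{-\nu_0}$ accurately enough to produce the numerical leading coefficients in \eqref{short time asymp for CS sol} and \eqref{asy-exp-AC-ends} — finite but laborious. The one conceptual point requiring care is the AC case, where $-d\Phi|_{y_0}$ has three positive eigenvalues but only two of the corresponding directions are geometrically meaningful; excluding the scaling eigenvalue $1$ from the block is what makes the family two-parameter, and it is permitted exactly because of the non-resonance verified above. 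The remaining steps — the non-resonance checks, matching the series of Theorem~\ref{singular IVP for ends} to the stated forms, and deducing the CS/AC asymptotics of the induced metrics — are routine.
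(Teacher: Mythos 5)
Your proposal is correct and follows essentially the same route as the paper: linearise the rescaled system $t\,\dot{\widehat y}=\Phi(\widehat y)$ at the cone fixed point, check that the eigenvalues are $\nu_0,\nu_1,\nu_2,-1$, verify the non-resonance condition of Theorem~\ref{singular IVP for ends} (for the CS case with the single positive eigenvalue $\nu_2$, for the AC case after $t\mapsto 1/t$ with the two eigenvalues $-\nu_0,-\nu_1$), and read off the families from the resulting generalised power series; the paper does exactly this, merely normalising the coordinates so that the fixed point sits at the origin. Two trivial slips in your asides, neither of which affects the argument: the consistency relation between the two sets of eigenvalues goes the other way around (the eigenvalues of the linearisation of \eqref{ODE system in good coord} at $(X_c,Y_c,Z_c)$ multiplied by $\tfrac{5+\sqrt5}{4}$ give $\nu_0,\nu_1,\nu_2$, since $ds=\tfrac{c_c}{a_cb_cc_c}\,dt=\tfrac{5+\sqrt5}{4}\,d(\log t)$ near the cone); and the eigendirection $y_0$ for the eigenvalue $-1$ corresponds to translations of the $t$-variable rather than rescalings (rescaling fixes the cone pointwise), which is also the interpretation used in the paper's discussion leading to Corollary~\ref{class-decay-rate}.
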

\begin{proof}
Recall from Remark \ref{remark- AC asymp from coeff}
that the cone over the $\SU(3)\times \U(1)$-invariant nearly parallel $G_2$-structure on $N(1,-1)$ is given by
\begin{align*}
a= \frac{2}{\sqrt{5}}\, t,
\quad
b = \sqrt{\frac{2}{15}(5-\sqrt{5})}\, t,
\quad
c = \sqrt{\frac{2}{15}(5+\sqrt{5})}\, t,
\quad
f = \frac{4}{3\sqrt{5}}\, t.
\end{align*}
Therefore, any deformation of the conical Spin(7)-structure on $(0,\infty)\times N(1,-1)$ can be described as
\begin{align*}
t^{-1}a= \frac{2}{\sqrt{5}}(1+X_1),
\quad
t^{-1}b = \sqrt{\frac{2}{15}(5-\sqrt{5})}(1+X_2),
\\
t^{-1}c = \sqrt{\frac{2}{15}(5+\sqrt{5})}(1+X_3),
\quad
t^{-1}f = \frac{8}{3\sqrt{5}}(1+X_4).
\end{align*}
Setting $(X_1, X_2, X_3, X_4) = (0,0,0,0)$ recovers the Spin(7)-cone. The system (\ref{ODE system}) becomes
\begin{align*}
t \dot{X_1}
=&
-X_1
+
\frac{5-\sqrt{5}}{4}\frac{1+X_2}{1+X_3}
+
\frac{5+\sqrt{5}}{4}\frac{1+X_3}{1+X_2}
-
\frac{3}{2}
\frac{1+X_1}{1+X_2}\frac{1+X_1}{1+X_3}-1,
\\
t \dot{X_2}
=&
-X_2
+
\frac{5+\sqrt{5}}{4}\frac{1+X_3}{1+X_1}
+
\frac{3}{2}\frac{1+X_1}{1+X_3}
-
\frac{5-\sqrt{5}}{4}
\frac{1+X_2}{1+X_3}\frac{1+X_2}{1+X_1}
-
\frac{2}{\sqrt{5}-1}\frac{1+X_4}{1+X_2}
-1,
\\
t \dot{X_3}
=&
-X_3
+
\frac{3}{2}\frac{1+X_1}{1+X_2}
+
\frac{5-\sqrt{5}}{4}
\frac{1+X_2}{1+X_1}
-
\frac{5+\sqrt{5}}{4}\frac{1+X_3}{1+X_1}
\frac{1+X_3}{1+X_2}
+
\frac{2}{\sqrt{5}+1}\frac{1+X_4}{1+X_3}
-1,
\\
t \dot{X}_4
=&
-X_4
+
\frac{2}{\sqrt{5}-1}\frac{(1+X_4)^2}{(1+X_2)^2}
-
\frac{2}{\sqrt{5}+1}\frac{(1+X_4)^2}{(1+X_3)^2}
-1.
\end{align*}
 
The linearisation $L$ of the right-hand side at $(0,0,0,0)$ is given by
\begin{align*}
L=
\begin{pmatrix}
-4 & \frac{-\sqrt{5}+3}{2}
& \frac{\sqrt{5}+3}{2} & 0
\\
\frac{-\sqrt{5}+3}{2} & \sqrt{5}-3
& 1 &  -\frac{2}{\sqrt{5}-1}
\\
\frac{\sqrt{5}+3}{2} & 1
& -\sqrt{5}-3
& \frac{2}{\sqrt{5}+1}
\\
0 & -\sqrt{5}-1
& \sqrt{5}-1  & 1
\end{pmatrix}.
\end{align*}
The eigenvalues of $L$ are given by $\nu_0, \nu_1, \nu_2$ and $-1$.
Writing $y = (X_1, X_2, X_3, X_4)$,
this is a system of the form (\ref{singular IVP for ends-eq}) with $y_0 = (0,0,0,0)$.

We will first construct the family of CS solutions.
The numerical values (\ref{num-value-eigenvalues}) show that condition (ii) in Theorem \ref{singular IVP for ends} is satisfied 
if we set $m=1$ and $\lambda_1 = \nu_2$. The eigenspace of $L$ associated with $\nu_2$ is spanned by $(-0.25,-4.84,0.09,10)$, where all components are rounded to two digits after the decimal point.
The existence of the 1-parameter family $\Psi^{\mathrm{cs}}_{\lambda}$ follows from Theorem \ref{singular IVP for ends}.

We have to replace $t$ by $1/t$ to construct the AC ends with Theorem \ref{singular IVP for ends}. Then the linearisation is given by $-L$.
By using the numerical approximations (\ref{num-value-eigenvalues}), one can see that the non-resonance condition (\ref{singular IVP for ends - nonresonance cond}) of Theorem \ref{singular IVP for ends} is satisfied
if we set $m=2$ and $\lambda_1= -\nu_0$, $\lambda_2 = -\nu_1$. Rounded to one digit after the decimal point, the eigenspaces associated with $\nu_0$ and $\nu_1$ are spanned by the vectors $(3.6, 0.8, -4.8, 1)$ and $(-10.6, 10.8, -5.1, 10)$, respectively. 
The statement follows with Theorem \ref{singular IVP for ends}.
\end{proof}

The solution $\Psi^{\mathrm{cs}}_0$ is the Spin(7)-cone itself.
All solutions $\Psi^{\mathrm{cs}}_{\lambda}$, $\lambda > 0$, are related by scaling, as are all solutions $\Psi^{\mathrm{cs}}_{\lambda}$, $\lambda < 0$. 
By Remark \ref{associated solutions} each $\Psi^{\mathrm{cs}}_{\lambda}$ gives rise to an associated solution of the system (\ref{ODE system in good coord}). Because by passing to $(X,Y,Z)$ coordinates Spin(7)-structures related by scaling are identified, we only get three distinct solutions and different choices of $\lambda$ of the same sign merely correspond to a shift in the $s$-parameter. The associated solution of
the Spin(7)-cone $\Psi^{\mathrm{cs}}_0$ is the fixed point $(X_c, Y_c, Z_c)$, which we have described in Remark \ref{critical points}. To determine the remaining two trajectories corresponding to the conically singular solutions $\Psi^{\mathrm{cs}}_{\lambda}$, we use the asymptotic expansion (\ref{short time asymp for CS sol}) to argue as in Remark \ref{range of s for S5 sol} that for each $\Psi^{\mathrm{cs}}_{\lambda}$ we can find an $S\in\mathbb{R}$ such that the associated solution $(X,Y,Z)$ is defined for $s\in(-\infty,S)$. As $s\rightarrow -\infty$, the solution $(X,Y,Z)$ converges to the fixed point $(X_c, Y_c, Z_c)$, which corresponds to the Spin(7)-cone. Thus, the two trajectories associated with the families  $\Psi^{\mathrm{cs}}_{\lambda}$, $\lambda > 0$, and $\Psi^{\mathrm{cs}}_{\lambda}$, $\lambda < 0$, are precisely the two branches of the 1-dimensional unstable manifold at $(X_c, Y_c, Z_c)$.

It will still be useful to us to compute the asymptotic expansion with respect to the $t$-parameter  of the associated solution of $\Psi^{\mathrm{cs}}_{\lambda}$ 
 as $t\rightarrow 0$, which is given by
\begin{subequations}
\label{short time asy exp cs sol}
\begin{align}
X(t)
&
\approx
X_c(1-0.68 \lambda t^{\nu_2}+\mathcal{O}(t^{2\nu_2})),
\\
Y(t)
&
\approx
Y_c(1-9.86 \lambda t^{\nu_2}+\mathcal{O}(t^{2\nu_2})),
\\
Z(t) 
&
\approx
Z_c (1 + 4.46 \lambda t^{\nu_2} + \mathcal{O}(t^{2\nu_2})).
\end{align}
\end{subequations}
Again all coefficients are rounded to two digits after the decimal point.

The solution $\Psi^{\mathrm{ac}}_{0,0}$ is the Spin(7)-cone itself.
Next we determine which ones of the AC ends $\Psi^{\mathrm{ac}}_{\alpha,\beta}$, $(\alpha,\beta) \neq (0,0)$, are related by scaling.
In Remark \ref{Remark-scaling} we have described how the Spin(7)-structures scale. After rescaling by $\kappa > 0$, the AC end $\kappa\Psi^{\mathrm{ac}}_{\alpha,\beta}$
is described by the functions
\begin{align*}
(\hat{a}(t),\hat{b}(t), \hat{c}(t),\hat{f}(t))=(\kappa\ a(t/\kappa), \kappa\ b(t/\kappa), \kappa\ c(t/\kappa), \kappa\ f(t/\kappa)).
\end{align*}
This corresponds to replacing the functions $(X_1(t), X_2(t), X_3(t),X_4(t))$ from the proof of Proposition \ref{CS and AC ends}
by  
$(X_1(t/\kappa),X_2(t/\kappa), X_3(t/\kappa), X_4(t/\kappa))$. 
By the asymptotic expansion \eqref{asy-exp-AC-ends}, the latter quadruple can also be obtained by replacing the parameters $(\alpha,\beta)$ by $(\kappa^{-\nu_1} \alpha,\kappa^{-\nu_0} \beta)$. Hence the orbits of the action of $\mathbb{R}_{+}$ on $\mathbb{R}^2-\{(0,0)\}$ given by $\kappa \circ (\alpha,\beta)=(\kappa^{-\nu_1} \alpha, \kappa^{-\nu_0} \beta)$ consist  precisely of parameters corresponding to AC ends which are related by scaling.  
The quotient of $\mathbb{R}^2-\{(0,0)\}$ by this action is homeomorphic to $S^1$ and each orbit has a unique representative on $S^1 \subset \mathbb{R}^2$. 

Using the asymptotic expansion \eqref{asy-exp-AC-ends}, again we can argue similarly as in Remark \ref{range of s for S5 sol} to deduce that for each $(\alpha,\beta)$ there exists $S\in\mathbb{R}$ such that the solution  $(X_{\alpha,\beta}(s), Y_{\alpha,\beta}(s), Z_{\alpha,\beta}(s))$ associated with $\Psi^{\mathrm{ac}}_{\alpha,\beta}$ this time is defined for $s\in(S,\infty)$. As $s\rightarrow \infty$, the solution approaches the Spin(7)-cone $(X_c, Y_c, Z_c)$.
Furthermore, because $S^1$ is compact, we can find $s_0\in\mathbb{R}$ independent of $(\alpha,\beta)\in S^1 \subset \mathbb{R}^2$ such that   
the associated solution of $\Psi^{\mathrm{ac}}_{\alpha,\beta}$ for any $(\alpha,\beta)\in S^1$ is defined for  $s\in (s_0,\infty)$.
The map 
\begin{gather*}
S^1 \rightarrow \mathbb{R}^3,
\\
(\alpha,\beta) \mapsto (X_{\alpha,\beta}(s_0), Y_{\alpha,\beta}(s_0), Z_{\alpha,\beta}(s_0)),
\end{gather*}
is an embedding. As we increase the choice of $s_0$, this embedded circle sweeps out a punctured embedded 2-ball centred at  $(X_c, Y_c, Z_c)$. Thus, the 2-dimensional stable manifold at the fixed point $(X_c, Y_c, Z_c)$ corresponds precisely to the trajectories associated with the AC ends   $\Psi^{\mathrm{ac}}_{\alpha,\beta}$.
This proves

\begin{proposition}
\label{lemma - crit point cone is AC}
Let $(X,Y,Z)$ be a forward complete solution of the system (\ref{ODE system in good coord}) which converges to the fixed point $(X_c, Y_c, Z_c)$. Then there exist $\alpha_0, \beta_0\in \mathbb{R}$ such that $(X,Y,Z)$ is associated to the AC end $\Psi^{\mathrm{ac}}_{\alpha_0,\beta_0}$.
\end{proposition}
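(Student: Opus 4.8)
The plan is to recognise this proposition as the converse of the computation carried out in the discussion immediately preceding it, and to close the loop with the stable manifold theorem. First I would record that, by the eigenvalue computation in Remark~\ref{critical points}, the linearisation of the system~\eqref{ODE system in good coord} at $(X_c,Y_c,Z_c)$ has eigenvalues (to one digit) $-4.1$, $-1.7$, $1.4$, so this fixed point is hyperbolic with a $2$-dimensional stable manifold $W^{s}$ and a $1$-dimensional unstable manifold. By the stable manifold theorem (see \cite[Chapter 2.7]{Perko}, as invoked in Remark~\ref{critical points}), $W^{s}$ is an embedded $2$-dimensional submanifold of a neighbourhood of $(X_c,Y_c,Z_c)$ whose points are exactly those lying on trajectories that converge to $(X_c,Y_c,Z_c)$ as $s\to\infty$. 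In particular the forward complete solution $(X,Y,Z)$ in the hypothesis lies on $W^{s}$.

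Next I would invoke the content of the paragraph before the proposition: the trajectories associated with the AC ends $\Psi^{\mathrm{ac}}_{\alpha,\beta}$ sweep out a punctured neighbourhood of $(X_c,Y_c,Z_c)$ inside $W^{s}$. Indeed, for each sufficiently large $s_0$ the map $(\alpha,\beta)\mapsto(X_{\alpha,\beta}(s_0),Y_{\alpha,\beta}(s_0),Z_{\alpha,\beta}(s_0))$ embeds the circle $S^1$ obtained from $\mathbb{R}^2\setminus\{(0,0)\}$ by the scaling action, and as $s_0$ increases these embedded circles exhaust a punctured embedded $2$-ball centred at $(X_c,Y_c,Z_c)$; since every one of these trajectories converges to $(X_c,Y_c,Z_c)$, that punctured $2$-ball is a punctured neighbourhood of $(X_c,Y_c,Z_c)$ in $W^{s}$, foliated by the AC-end trajectories.

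Granting these two facts the argument is short. If $(X,Y,Z)$ is the stationary solution at $(X_c,Y_c,Z_c)$, it is the associated solution of the Spin(7)-cone $\Psi^{\mathrm{ac}}_{0,0}$. Otherwise $(X,Y,Z)$ is non-stationary and converges to $(X_c,Y_c,Z_c)$, so for $s$ large the point $(X(s),Y(s),Z(s))$ lies in the punctured $2$-ball above, hence on the trajectory of the associated solution of some $\Psi^{\mathrm{ac}}_{\alpha_0,\beta_0}$. The right-hand side of~\eqref{ODE system in good coord} is smooth near $(X_c,Y_c,Z_c)$, so by uniqueness the curve traced by $(X,Y,Z)$ is a time-translate of that trajectory; since the associated solution of a given Spin(7)-structure is defined only up to the choice of origin of the $s$-parameter (Remark~\ref{associated solutions}), it follows that $(X,Y,Z)$ is associated to $\Psi^{\mathrm{ac}}_{\alpha_0,\beta_0}$, as claimed.

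The only genuinely substantive ingredient is the claim used in the second paragraph, namely that the $2$-parameter family of AC ends exhausts $W^{s}$ near the fixed point rather than just a lower-dimensional part of it. This is precisely where the dimension count matters --- a $2$-parameter family of AC ends against a $2$-dimensional stable manifold --- and it rests on Picard's Theorem~\ref{singular IVP for ends} producing a full $2$-parameter family aligned with the two negative eigendirections $\nu_0,\nu_1$ (whose eigenvectors are linearly independent, being attached to distinct eigenvalues) together with the non-resonance verification already made in the proof of Proposition~\ref{CS and AC ends}, and on the embedding/sweeping-out statement for the time-$s_0$ slices. Since all of this is established before the proposition is stated, the remaining task is only the packaging above, and I expect no further obstacle.
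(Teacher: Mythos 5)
Your argument is exactly the one the paper uses: the paragraph preceding Proposition~\ref{lemma - crit point cone is AC} shows that the trajectories of the AC ends $\Psi^{\mathrm{ac}}_{\alpha,\beta}$ sweep out a punctured embedded $2$-ball about $(X_c,Y_c,Z_c)$, identifies this with the stable manifold, and concludes with "This proves" the proposition. Your repackaging via the stable manifold theorem, the dimension count, and uniqueness of trajectories is a faithful and correct rendering of that same reasoning.
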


In the construction of the AC ends in the proof of Proposition \ref{CS and AC ends}, the linearisation $-L$ also has the positive eigenvalue $1$.
Deformations given by this eigenvalue correspond to translations of the $t$-variable, and therefore do not give new solutions. In particular we get

\begin{corollary}
\label{class-decay-rate}
There exists a gauge such that any $\SU(3)\times \U(1)$-invariant AC Spin(7) metric which is asymptotic to the cone over the unique $\SU(3)\times \U(1)$-invariant nearly parallel $\mathrm{G}_2$-structure on $N(1,-1)$ has decay rate equal to $\nu_0$ or $\nu_1$.
\end{corollary}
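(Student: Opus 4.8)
The strategy is to reduce the statement to Proposition \ref{lemma - crit point cone is AC}, which shows that every AC solution is one of the $\Psi^{\mathrm{ac}}_{\alpha,\beta}$, together with the decay rates already recorded in Proposition \ref{CS and AC ends}(ii); the only genuinely new ingredient is the observation that the extra positive eigenvalue $1$ of $-L$ corresponds merely to a translation of the $t$-variable.

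First I would set up the dictionary. An $\SU(3)\times\U(1)$-invariant AC Spin(7) metric asymptotic to $C$ is, near infinity, a cohomogeneity one metric with principal orbit $N(1,-1)$, hence on an end $(T,\infty)\times N(1,-1)$ it is given by a solution $(a,b,c,f)$ of \eqref{ODE system}, where $t$ is the arc-length coordinate meeting the orbits orthogonally. This $t$ is canonical up to a translation $t\mapsto t+\text{const}$, and it is exactly this translation freedom that the word ``gauge'' refers to; up to it one may take the diffeomorphism $F$ of Definition \ref{def-AC} to be equivariant. The AC hypothesis then forces $(t^{-1}a,t^{-1}b,t^{-1}c,t^{-1}f)\to(a_c,b_c,c_c,f_c)$ as $t\to\infty$. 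Passing to the associated solution of \eqref{ODE system in good coord} as in Remark \ref{associated solutions}, and using that $a,b,c$ are each comparable to $t$ so that $ds=\tfrac{c}{ab}\,dt$ has a non-integrable tail $\sim\mathrm{const}/t$, I would conclude that $(X,Y,Z)$ is a forward complete flow line with $(X,Y,Z)\to(X_c,Y_c,Z_c)$ as $s\to\infty$.

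Next, Proposition \ref{lemma - crit point cone is AC} applies and gives $(\alpha_0,\beta_0)\in\mathbb{R}^2$ such that this flow line is the one associated to $\Psi^{\mathrm{ac}}_{\alpha_0,\beta_0}$. Since the map $(a,b,c,f)\mapsto(X,Y,Z)$ only forgets the overall scale and the origin of $t$ (compare the reconstruction formulas in the proof of Lemma \ref{lemma-reduction to proj system}), our $(a,b,c,f)$ is obtained from the coefficient functions of $\Psi^{\mathrm{ac}}_{\alpha_0,\beta_0}$ by a rescaling followed by a translation $t\mapsto t+\varepsilon$. The rescaling by $\kappa>0$ merely replaces $(\alpha_0,\beta_0)$ by $(\kappa^{-\nu_1}\alpha_0,\kappa^{-\nu_0}\beta_0)$, by the computation preceding Proposition \ref{lemma - crit point cone is AC}, so it keeps us inside the family with the same leading exponents. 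The translation is the deformation along the eigenvalue $1$ of $-L$, whose eigenvector is $(1,1,1,1)$; this is precisely the mode that was left out when Theorem \ref{singular IVP for ends} was invoked with $m=2$ and $\lambda_1,\lambda_2=-\nu_0,-\nu_1$ in the proof of Proposition \ref{CS and AC ends}. Choosing the radial coordinate so that this mode is absent---this is the gauge fixing---identifies our metric with $\Psi^{\mathrm{ac}}_{\alpha,\beta}$ for some $(\alpha,\beta)\in\mathbb{R}^2$.

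Finally, by Proposition \ref{CS and AC ends}(ii), $\Psi^{\mathrm{ac}}_{\alpha,\beta}$ has decay rate $\nu_0$ when $\alpha=0$ and $\nu_1$ when $\alpha\neq 0$; the $\mathcal{O}(t^{\nu-j})$ decay of the Spin(7)-form and the metric in the sense of Definition \ref{def-AC} is equivalent to the corresponding polynomial decay of $(t^{-1}a,t^{-1}b,t^{-1}c,t^{-1}f)$ towards $(a_c,b_c,c_c,f_c)$, since on each orbit every relevant tensor is algebraic in these four functions. This yields the corollary. I expect the only delicate point to be making item (ii) precise: one must check that fixing the radial coordinate removes exactly the $t$-translation mode, so that the rates that actually occur are $\nu_0$ and $\nu_1$ and not the slower rate $-1$ that a careless choice of gauge would produce; everything else is a direct appeal to the two quoted propositions and to the equivalence of the two notions of decay.
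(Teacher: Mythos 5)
Your proposal is correct and takes essentially the same approach as the paper: Proposition \ref{lemma - crit point cone is AC} identifies any AC solution with some $\Psi^{\mathrm{ac}}_{\alpha,\beta}$ up to scale and $t$-translation, and the remaining positive eigenvalue $1$ of $-L$ (with eigenvector $(1,1,1,1)$) is exactly the translation mode that the gauge fixing removes. You spell out the reconstruction of $(a,b,c,f)$ from $(X,Y,Z)$ and the eigenvector computation in more detail than the paper's one-sentence proof, but the logic is identical.
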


\section{ALC asymptotics}
\label{section-ALC}

In this section we are going to show that any forward complete solution $(a,b,c,f)$ of the ODE system (\ref{ODE system}) describes an ALC Spin(7)-structure if and only if the associated solution $(X,Y,Z)$ of the system (\ref{ODE system in good coord}) converges to the fixed point $(1,1,0)$ as $s\rightarrow \infty$.

\begin{lemma}
Let $(a,b,c,f)$ be a solution of the system (\ref{ODE system}), where $a,b,c,f$ are positive functions satisfying $a, b < c$.
If the associated solution $(X,Y,Z)$ of the system (\ref{ODE system in good coord}) is forward complete with
$\lim_{s\rightarrow \infty} (X,Y,Z) = (1,1,0)$, then $(a,b,c,f)$ is forward complete and there exists $\ell >0$ such that
\begin{align}
\lim_{t\rightarrow\infty} \frac{a(t)}{t} = 1,
\quad
\lim_{t\rightarrow\infty} \frac{b(t)}{t} = 1,
\quad
\lim_{t\rightarrow\infty} \frac{c(t)}{t} = 1,
\quad
\lim_{t\rightarrow\infty} f(t) = \ell.
\label{limit-abcf}
\end{align}
\end{lemma}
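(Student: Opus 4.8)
The plan is to combine the fixed-point structure of the $(X,Y,Z)$-system with the explicit logarithmic derivatives of $a,b,c,f$. First I would record forward completeness: since $a,b<c$, the hypotheses of Lemma~\ref{bounds in xyz coords} hold (take any $\kappa\ge\frac54$ with $Z<\kappa$ at the starting time), and since $(X,Y,Z)\to(1,1,0)$ the function $Y$ is bounded away from zero. Lemma~\ref{lemma-reduction to proj system} then yields at once that $(a,b,c,f)$ is forward complete and that $t\to\infty$ as $s\to\infty$, so it only remains to establish the limits \eqref{limit-abcf}.

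The computational backbone is the following set of logarithmic derivatives in the rescaled variable $s$, obtained from \eqref{ODE system} using $dt=\frac{ab}{c}\,ds$:
\begin{align*}
\frac{d}{ds}\log a &= 1+Y-X, \\
\frac{d}{ds}\log b &= 1+X-Y-\frac{Z}{Y}, \\
\frac{d}{ds}\log c &= X+Y-1+Z, \\
\frac{d}{ds}\log f &= Z\,\frac{1-Y}{Y}
\end{align*}
(the first of these already appears in the proof of Lemma~\ref{lemma-reduction to proj system}). Since $(X,Y,Z)\to(1,1,0)$, the right-hand sides of the first three lines tend to $1$ and that of the fourth tends to $0$; in particular $\frac{d}{ds}\log a>\frac12$ for large $s$, so $a\to\infty$, and then $a/c=\sqrt X\to1$, $b/c=\sqrt Y\to1$ force $b,c\to\infty$, $a/b\to1$, and $f/c=Z/\sqrt{XY}\to0$.

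With these ratios in hand, I would read off the asymptotics of $a,b,c$ straight from \eqref{ODE system - a}--\eqref{ODE system - c}:
\begin{align*}
\frac{da}{dt} &= \frac{(b/c)^2+1-(a/c)^2}{b/c}, \\
\frac{db}{dt} &= \frac{1+(a/c)^2-(b/c)^2}{a/c}-\frac{f}{c}\cdot\frac{c}{b}, \\
\frac{dc}{dt} &= \frac{(a/c)^2+(b/c)^2-1}{(a/c)(b/c)}+\frac{f}{c},
\end{align*}
and each right-hand side tends to $1$ as $t\to\infty$. Since $t\to\infty$, the elementary fact that $\frac{dx}{dt}(t)\to L$ forces $x(t)/t\to L$ gives $a(t)/t\to1$, $b(t)/t\to1$, $c(t)/t\to1$. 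For $f$: the fourth logarithmic-derivative equation has positive right-hand side (as $Z>0$ and $0<Y<1$ are preserved, cf.\ Lemma~\ref{bounds in xyz coords} and Lemma~\ref{first observations}), so $f$ is increasing; to show it is bounded it suffices that $\int^{\infty}Z\,ds<\infty$ (the factor $(1-Y)/Y$ being bounded for large $s$), which follows from $\dot Z=Z(5-3X-3Y-4Z)$ since $5-3X-3Y-4Z\to-1$, so $Z$ decays exponentially in $s$. Hence $\log f$ converges and $f\to\ell$ for some $\ell\in(0,\infty)$; since $t\to\infty$ as $s\to\infty$, this is the limit as $t\to\infty$ as well.

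The step I expect to be the main obstacle is the last one — confirming that $f$ converges to a finite and strictly positive value. Strict positivity is immediate once $f$ is known to be increasing with positive initial value, but finiteness hinges on the exponential decay of $Z$ as the flow approaches the sink $(1,1,0)$, which is clean here only because the coefficient $5-3X-3Y-4Z$ becomes negative in the limit. Everything else is essentially bookkeeping once the logarithmic derivatives in $s$ have been written down.
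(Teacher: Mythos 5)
Your proposal is correct, and in fact the argument for the finiteness of $\lim f$ is cleaner than the one in the paper. The two proofs agree in structure up to the bound on $f$: both invoke Lemma~\ref{lemma-reduction to proj system} for forward completeness and $t\to\infty$, then substitute $(X,Y,Z)\to(1,1,0)$ into the right-hand side of \eqref{ODE system} to get $\dot a,\dot b,\dot c\to1$ and hence $a/t,b/t,c/t\to1$, and both note that $f$ is monotone increasing. Where they differ is the boundedness of $f$. The paper argues in the $t$-variable: it first observes that $Z\to0$ together with $a/t,b/t,c/t\to1$ forces $f=o(t)$, then rewrites $\dot f=\tfrac{f^2}{t^2}\tfrac{t^2}{b^2}(1-Y)$ so that eventually $\dot f<\kappa f^2/t^2$ with $f(t_0)<t_0/\kappa$, and closes with the Riccati-type comparison Lemma~\ref{comparison lemma}. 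You instead argue in the $s$-variable: from $\tfrac{d}{ds}\log f=Z\,\tfrac{1-Y}{Y}>0$ you reduce finiteness of $\lim f$ to $\int^\infty Z\,ds<\infty$, which follows because $\tfrac{d}{ds}\log Z=5-3X-3Y-4Z\to-1$ makes $Z$ decay exponentially in $s$. This sidesteps both the $o(t)$ estimate and the comparison lemma, and is the more economical route; the paper's version does have the side benefit of giving a rate in the $t$-variable, which is convenient to have in hand for the subsequent proposition on polynomial decay of $\tilde a,\tilde b,\tilde c,\tilde f$, but that proposition in fact re-derives decay rates independently via the center manifold theorem, so nothing is lost.
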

\begin{proof}
$(a,b,c,f)$ is forward complete by Lemma \ref{lemma-reduction to proj system}.
The relation 
\begin{align*}
(X,Y,Z)=(a^2/c^2,b^2/c^2, abf/c^3)
\end{align*}
allows us to substitute $\lim_{s\rightarrow \infty} (X,Y,Z) = (1,1,0)$ into the right-hand side of the ODE system (\ref{ODE system}) to obtain 
\begin{align*}
\lim_{t\rightarrow\infty} \dot{a}(t) = 1,
\quad
\lim_{t\rightarrow\infty} \dot{b}(t) = 1,
\quad
\lim_{t\rightarrow\infty} \dot{c}(t) = 1.
\end{align*}
This proves the assertion for $a,b$ and $c$.

Next we show that $f$ is bounded. 
By assumption
\begin{align*}
Z = \frac{abf}{c^3} =
\left(
\frac{a}{t}
\right)
\left(
\frac{b}{t}
\right)
\left(
\frac{t}{c}
\right)^3
\frac{f}{t}
\end{align*}
converges to zero. By the already established limiting behaviour on $a,b,c$, we see that $f$ grows at most as $o(t)$. In particular,
there exists some $\kappa > 0$ such that for sufficiently large times 
$f(t) < t/\kappa$. 
To see that $f$ is bounded we write 
\begin{align*}
\dot{f}=\frac{f^2}{t^2}
\frac{t^2}{b^2}
(1-Y).
\end{align*}
Because $\lim_{t\rightarrow\infty} Y(t) = 1$ and $\lim_{t\rightarrow\infty} b(t)/t=1$, we see that for sufficiently large times we have $f(t) < t/\kappa$ and 
\begin{align*}
\dot{f} < \kappa \frac{f^2}{t^2}.
\end{align*}
The boundedness of $f$ follows from Lemma \ref{comparison lemma}.
Because we have $b<c$, $f$ is monotone increasing by Lemma \ref{first observations}. Since $f$ is bounded, monotone increasing and positive, it will converge to some positive constant $\ell$.
\end{proof}

We have used the following comparison principle.
\begin{lemma}
\label{comparison lemma}
Let $f\in \mathcal{C}^1([T,\infty))$ with $T>0$. If
there exist  $t_0 > T$ and $C>0$ such that 
$\dot{f} < C f^2/t^2$   
for all $t\in [t_0,\infty)$ and $f(t_0) < t_0 / C$, 
then $f$ is bounded from above.
\end{lemma}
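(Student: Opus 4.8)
The plan is to integrate the differential inequality after dividing through by $f^2$, which amounts to comparing $f$ implicitly with the explicit solution of the Riccati-type equation $\dot{g}=Cg^2/t^2$. The role of the hypothesis $f(t_0)<t_0/C$ is precisely to ensure that the comparison solution through $(t_0,f(t_0))$ exists on all of $[t_0,\infty)$ and stays bounded; without it, $g$ blows up in finite time, so the bound on $f$ genuinely uses this quantitative initial condition rather than merely the differential inequality.

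First I would localise to the region where $f$ is positive. At any point $t$ with $f(t)=0$ the hypothesis gives $\dot{f}(t)<Cf(t)^2/t^2=0$, so $f$ is strictly decreasing through every zero; an elementary argument then shows that once $f$ is non-positive it remains non-positive, so the set $\{\,t\ge t_0:\ f(t)>0\,\}$ is either empty or an interval $[t_0,T^*)$ with $f\le 0$ on $[T^*,\infty)$. On $[T^*,\infty)$ the bound $f\le 0$ is trivial, and on the compact interval $[T,t_0]$ the $\mathcal{C}^1$ function $f$ is automatically bounded, so it remains only to bound $f$ from above on $[t_0,T^*)$, where $f>0$. (In the application $f$ is a positive function, so $T^*=\infty$ and this reduction is vacuous; it is included only so that the statement holds as given.)

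On $[t_0,T^*)$ I divide $\dot{f}<Cf^2/t^2$ by $f^2>0$ and integrate from $t_0$ to $t$, using $\int_{t_0}^{t}Cs^{-2}\,ds=C/t_0-C/t<C/t_0$:
\[
-\frac{1}{f(t)}+\frac{1}{f(t_0)}=\int_{t_0}^{t}\frac{\dot{f}(s)}{f(s)^2}\,ds<\frac{C}{t_0}.
\]
Rearranging yields $1/f(t)>1/f(t_0)-C/t_0$, and the hypothesis $f(t_0)<t_0/C$ says exactly that $\delta:=1/f(t_0)-C/t_0$ is strictly positive. Hence $0<f(t)<1/\delta$ on $[t_0,T^*)$; combined with the two trivial cases above, $f<1/\delta$ on $[t_0,\infty)$ and $f$ is bounded on $[T,t_0]$, so $f$ is bounded from above on $[T,\infty)$.

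The computation is routine; the only point requiring care is that the differential inequality is informative only where $f>0$ — the natural substitution $h=-1/f$ degenerates at a zero of $f$ — which is why the short sign analysis in the second paragraph is needed, and that the constant $\delta$ produced by the integration is positive \emph{precisely} because of the sharp bound $f(t_0)<t_0/C$, which is where the hypothesis enters in an essential way.
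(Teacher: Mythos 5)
Your proof is correct. It takes a mildly different route from the paper's: the paper writes down the explicit one-parameter family $h_\alpha(t)=t/(C-\alpha t)$ of solutions to the model Riccati equation $\dot h = Ch^2/t^2$, picks $\alpha<0$ so that $f(t_0)<h_\alpha(t_0)$, and invokes the standard ODE comparison principle to conclude $f<h_\alpha\le 1/|\alpha|$; you instead divide the inequality by $f^2$ and integrate directly, which is the same Riccati structure but packaged as the substitution $u=-1/f$. The integration approach is more self-contained (no separate comparison lemma and no need to choose $\alpha$), but the price is the preliminary sign analysis to justify dividing by $f^2$ — the barrier-function comparison in the paper sidesteps this, since $f<h_\alpha$ is preserved regardless of the sign of $f$. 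In the paper's application $f$ is a priori positive, so this extra case is vacuous, but your handling of it is correct and makes the lemma hold verbatim for arbitrary $\mathcal{C}^1$ functions. One further small observation: after integrating you obtain $1/f(t)>\delta>0$, which yields $f(t)<1/\delta$ only because you have already restricted to the region $f>0$; you note this correctly but it is the one place where the argument would silently fail if the sign reduction were skipped.
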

\begin{proof}
The solution of the model equation 
$\dot{h} = C h^2/t^2$ is
\begin{align*}
h_{\alpha}(t) = \frac{t}{C - \alpha t}
\end{align*}
where $\alpha$ is the constant of integration.
For us only $\alpha < 0$ is relevant. In this case $h_{\alpha}$ has a singularity at $t^*=C/\alpha < 0$
with $\lim_{t \nearrow t^*} h(t) = \infty$ and 
$\lim_{t \searrow t^*} h(t) = - \infty$ and is asymptotic to
$1/|\alpha|$ for $t \rightarrow \pm \infty$.

If $f(t_0) < t_0 / C$
for some $t_0 > T$,
we can find an $\alpha < 0$ to make this inequality slightly stronger: 
\begin{align*}
f(t_0) < \frac{t_0}{C-\alpha t_0} = h_{\alpha}(t_0).
\end{align*}
By the above discussion $h_{\alpha}$ is smooth for $t\geq 0$
because $\alpha < 0$ and bounded from above by $1/|\alpha|$.
Hence for all $t \geq t_0$ we have the bound $f(t) < h_{\alpha}(t) < 1/|\alpha|$. It follows that $f$ is bounded.
\end{proof}

\begin{proposition}
Assume that $(a, b, c, f)$ is a forward complete solution of the system \eqref{ODE system} which satisfies \eqref{limit-abcf}.
Write
\begin{align*}
\tilde{a}(t) = t^{-1} a(t) - 1,
\quad
\tilde{b}(t) = t^{-1} b(t) - 1,
\quad
\tilde{c}(t) = t^{-1} c(t) - 1,
\quad
\tilde{f}(t) = \frac{1}{\ell} f(t) - 1.
\end{align*}
Then there exists $\gamma > 0$ such that $\tilde{a}^{(k)}(t), \tilde{b}^{(k)}(t),\tilde{c}^{(k)}(t),
\tilde{f}^{(k)}(t)$ behave like $\mathcal{O}(t^{-k-\gamma})$ as $t\rightarrow\infty$ for $k \geq 0$.
Here $\tilde{a}^{(k)}(t)$ denotes the k-th derivative of $\tilde{a}(t)$.
\end{proposition}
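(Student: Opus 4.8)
The plan is to convert the asymptotic statement about $(a,b,c,f)$ into a statement about the decay of the associated solution $(X,Y,Z)$ of the polynomial system \eqref{ODE system in good coord} near its fixed point $(1,1,0)$, and then transfer the decay rates back through the integral formulas of Lemma \ref{lemma-reduction to proj system}. First I would linearise \eqref{ODE system in good coord} at $(1,1,0)$: writing $(X,Y,Z) = (1+x, 1+y, z)$ one finds a linear part whose $Z$-equation decouples to leading order (since $\dot Z = Z(5-3X-3Y-4Z)$ gives $\dot z = -z + \ldots$), while the $x,y$ block is governed by $\dot X = 2X(2-2X-Z)$, $\dot Y = 4Y - 4Y^2 - 2YZ - 2Z$. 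Computing the eigenvalues of this $3\times 3$ linearisation (all should have negative real part, consistent with $(1,1,0)$ being a sink as asserted in Remark \ref{critical points}), the Hartman--Grobman theorem and standard stable-manifold estimates give that $x(s), y(s), z(s)$, together with all their $s$-derivatives, decay like $e^{-\delta s}$ for some $\delta > 0$ determined by the spectral gap (the smallest absolute value of the real parts of the eigenvalues, minus an $\varepsilon$ if there is a resonance; since the system is real-analytic and the fixed point hyperbolic one can in fact take $\delta$ equal to that minimal real part).

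Next I would relate $s$ and $t$. From Lemma \ref{lemma-reduction to proj system}, $\frac{d}{ds}\log a = Y - X + 1 = 1 + (y - x)$, so $\log a(s) = s + \log a(s_0) + \int_{s_0}^s (y-x)\,d\hat s$; since $y - x$ decays exponentially the integral converges, hence $a(s) = c_0 e^{s}(1 + O(e^{-\delta s}))$ for a constant $c_0 > 0$. Likewise $t(s) = t(s_0) + \int_{s_0}^s a\sqrt{Y}\,d\hat s$, and $a\sqrt Y = c_0 e^s(1 + O(e^{-\delta s}))$, so $t(s) = c_0 e^s(1 + O(e^{-\delta s})) + (\text{const})$; in particular $t \asymp e^s$ and $e^{-\delta s} \asymp t^{-\delta}$. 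Substituting back, $t^{-1}a(t) = a(s)/t(s) = 1 + O(t^{-\delta})$, and similarly $b/c = \sqrt{Y/(1)} \cdot \ldots$ — more directly, $t^{-1}b - t^{-1}a = (t^{-1}a)(B-1)$ with $B = \sqrt Y$, giving $t^{-1}b(t) - 1 = O(t^{-\delta})$; the same for $c$. For $f$: $\frac{f}{t} = \frac{Z}{AB}\cdot\frac{c}{t}$ with $A = \sqrt X, B = \sqrt Y$ both $\to 1$, so $f(t)/t = z(s)(1+O(t^{-\delta}))$, but $z$ decays, consistent with $f \to \ell$ finite; to get the rate on $\tilde f = f/\ell - 1$ one integrates \eqref{ODE system - f}, $\frac{d}{dt}\log f = \frac{f}{b^2} - \frac{f}{c^2} = \frac{f}{t^2}\cdot\frac{t^2}{b^2}(1 - Y)$, and since $f = O(t)$, $1 - Y = O(t^{-\delta})$, the integrand is $O(t^{-1-\delta})$, so $\log f(t) - \log \ell = -\int_t^\infty(\ldots) = O(t^{-\delta})$, hence $\tilde f(t) = O(t^{-\delta})$. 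Setting $\gamma = \delta$ handles $k=0$.

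For the higher derivatives $k \geq 1$, the cleanest route is a bootstrap on the ODE \eqref{ODE system} itself rather than differentiating the integral formulas. Having established $\tilde a, \tilde b, \tilde c = O(t^{-\gamma})$ and $\tilde f = O(t^{-\gamma})$, rewrite \eqref{ODE system} in terms of $\tilde a, \dots, \tilde f$: e.g. $\dot{\tilde a} = \frac{\dot a}{t} - \frac{a}{t^2} = \frac{1}{t}\big(\frac{b^2+c^2-a^2}{abc} - \frac{a}{t}\big)$, and expanding the right side in $\tilde a, \tilde b, \tilde c$ gives $\dot{\tilde a} = \frac{1}{t} P(\tilde a, \tilde b, \tilde c)$ where $P$ is real-analytic with $P(0,0,0)=0$; thus $\dot{\tilde a} = O(t^{-1-\gamma})$, i.e. the $k=1$ claim for $a$. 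Differentiating this relation repeatedly and using the inductive hypothesis on lower derivatives (each $t$-derivative either hits the explicit $1/t$ factor, lowering the power by one, or hits a $\tilde a$-type term, which by induction gains a $t^{-1}$) yields $\tilde a^{(k)} = O(t^{-k-\gamma})$; the analogous computation for $b, c$ is identical, and for $f$ one uses $\dot{\tilde f} = \frac{1}{\ell}\dot f = \frac{f}{\ell}\big(\frac{f}{b^2} - \frac{f}{c^2}\big)$, which is $O(t^{-1-\gamma})$ by the already-established bounds, then bootstraps the same way.

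I expect the main obstacle to be the first paragraph: one must actually compute the eigenvalues of the linearisation of \eqref{ODE system in good coord} at $(1,1,0)$, verify they all have negative real part, and check whether resonances among them force an $\varepsilon$-loss in the decay rate (the statement only claims existence of some $\gamma > 0$, so an $\varepsilon$-loss is harmless, but one still needs the qualitative hyperbolicity and the exponential-decay estimate for the stable manifold, including for derivatives — this is standard ODE theory, e.g. via the variation-of-constants/Gronwall argument underlying the stable manifold theorem, but it is the step that does real work). A secondary subtlety is making the change of variables between $s$ and $t$ quantitatively precise, in particular checking that the $O(e^{-\delta s})$ error in $t(s)$ does not degrade the rate when inverted to express $s$ in terms of $t$; this follows because $t(s)$ is, to leading order, a fixed multiple of $e^s$ plus an exponentially small correction, so $s = \log t - \log c_0 + O(t^{-\delta})$ and $e^{-\delta s} = c_0^\delta t^{-\delta}(1 + O(t^{-\delta}))$.
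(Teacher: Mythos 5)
Your proposal is correct and reaches the stated conclusion, but via a genuinely different route from the paper. The paper's own proof works directly with $\tilde a, \tilde b, \tilde c, \tilde f$: it substitutes $a = t(1+X_1)$ etc., changes variables to $\tau = \log t$, and adjoins an auxiliary variable $X_5 = e^{-\tau}$ so that the non-autonomous $\tau$-system becomes a $5$-dimensional autonomous one. The linearisation at the origin then has a one-dimensional kernel in the $X_4$-direction, so the fixed point is \emph{not} hyperbolic, and the paper invokes Carr's centre manifold theorem (the centre dynamics is trivial since $\dot X_4 = 0$ there) to get the exponential decay $\mathcal{O}(e^{-\gamma\tau}) = \mathcal{O}(t^{-\gamma})$. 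Your proposal instead stays in the $(X,Y,Z)$ picture with its $s$-parameter, where $(1,1,0)$ is a bona fide hyperbolic sink --- the linearisation is upper triangular with eigenvalues $-4,-4,-1$ --- so ordinary hyperbolic-sink stability suffices for exponential decay in $s$ of the solution and its $s$-derivatives; you then transfer to polynomial decay in $t$ via the asymptotics $t(s) \sim C e^{s}$ and reconstruct the individual functions (with a separate integration to control $\tilde f$). The centre direction in the paper and the extra integration you need for $\tilde f$ are two faces of the same phenomenon: $f$ is projected out when passing to $(X,Y,Z)$ and must be recovered from an integral. Your route is built on more elementary fixed-point theory at the cost of a longer chain of transfers; the paper's is shorter on transfers but leans on centre manifold theory. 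The higher-derivative bootstrap is essentially identical in both. Two small corrections to your write-up: (i) Hartman--Grobman yields only a topological conjugacy and hence no decay rate --- the tool you actually want is, as you also mention, the Gronwall/variation-of-constants estimate behind the stable manifold theorem; (ii) in the $\tilde f$ step you invoke ``$f = O(t)$'', but you have already established $f \to \ell$, hence $f = O(1)$, which in fact gives the stronger integrand bound $\mathcal{O}(t^{-2-\delta})$ and only strengthens the conclusion.
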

\begin{proof}
Set
\begin{align*}
a(t) = t (1+X_1(t)),
\quad
b(t) = t (1+X_2(t)),
\quad
c(t) = t (1+X_3(t)),
\quad
f(t) = \ell (1+X_4(t)).
\end{align*}
The assumption \eqref{limit-abcf} is equivalent to
\begin{align*}
\lim_{t\rightarrow\infty}X_i(t)=0,\quad \text{for}\ i=1,\dots,4
\end{align*}
After the change of variable $e^{\tau} = t$ the system (\ref{ODE system}) becomes
\begin{align*}
\frac{dX_1}{d\tau}
&=
-X_1+\frac{(1+X_2)^2+(1+X_3)^2-(1+X_1)^2}{(1+X_2)(1+X_3)}-1,
\\
\frac{d X_2}{d\tau}
&=
-X_2+\frac{(1+X_3)^2+(1+X_1)^2-(1+X_2)^2}{(1+X_3)(1+X_1)}
-\ell\, e^{-\tau} \frac{1+X_4}{1+X_2}
-1,
\\
\frac{d X_3}{d \tau}
&=
-X_3+\frac{(1+X_1)^2+(1+X_2)^2-(1+X_3)^2}{(1+X_1)(1+X_2)}
+\ell\, e^{-\tau} \frac{1+X_4}{1+X_3}
-1,
\\
\frac{d X_4}{d\tau}
&=
\ell\, e^{-\tau}\frac{(1+X_4)^2}{(1+X_2)^2}
-
\ell\, e^{-\tau}\frac{(1+X_4)^2}{(1+X_3)^2}.
\end{align*}
Setting $X_5=e^{-\tau}$ and $X=(X_1,X_2,X_3,X_4,X_5)$, we get a system of equations of the form $\frac{dX}{d\tau}= \Phi(X)$, where $X(0)=0$ and the linearisation of $\Phi$ at 0 is given by
\begin{align*}
d\Phi|_{U=0}
=
\begin{pmatrix}
-3 & 1 & 1 & 0 & 0
\\
1 & -3 & 1 & 0 & -\ell
\\
1 & 1 & -3 & 0 & \ell
\\
0 & 0 & 0 & 0 & 0
\\
0 & 0 & 0 & 0 & -1
\end{pmatrix}.
\end{align*}
$d\Phi_{U=0}$ has a 1-dimensional kernel spanned by $(0,0,0,1,0)$
and four negative eigenvalues.
Moreover, $\{(0,0,0,c,0)\,|\, c\in\mathbb{R}\}$ is the center manifold of the system. The center manifold equation is
\begin{align*}
\frac{d X_4}{d\tau}=0.
\end{align*}
Hence by \cite[Theorem 2]{Carr} for any solution $X$ converging to the stationary point $X=0$ as in our hypothesis there exists $\gamma>0$ such that
\begin{align*}
(X_1,X_2,X_3,X_4,X_5) = (0,0,0,0,0) + \mathcal{O}(e^{-\gamma\tau}).
\end{align*}
The polynomial decay follows by switching back to the variable $t$. 
The argument for the derivatives of $(X_1, X_2, X_3, X_4)$ follows from a bootstrap argument.
\end{proof}

The results in this section prove

\begin{proposition}
\label{prop-ALC crit point}
Let $(a,b,c,f)$ be a solution of the system (\ref{ODE system}), where $a,b,c,f$ are positive functions satisfying $a, b < c$.
Suppose the associated solution $(X,Y,Z)$ of the system (\ref{ODE system in good coord}) is forward complete with
\begin{align*}
\lim_{s\rightarrow \infty} (X,Y,Z) = (1,1,0).
\end{align*} 
Then $(a, b, c, f)$ defines an $\SU(3)\times \U(1)$-invariant ALC Spin(7) metric on $(T,\infty)\times N(1,-1)$ for some $T>0$.
\end{proposition}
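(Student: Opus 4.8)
The plan is to assemble the three preliminary results of Section \ref{section-ALC} into the statement. \textbf{Step 1 (completeness and leading asymptotics).} Because $a,b<c$ and the associated $(X,Y,Z)$ is forward complete with $\lim_{s\to\infty}(X,Y,Z)=(1,1,0)$, the first Lemma of this section applies: $(a,b,c,f)$ is forward complete and the limits \eqref{limit-abcf} hold, i.e.\ $a(t)/t,b(t)/t,c(t)/t\to 1$ and $f(t)\to\ell$ for some $\ell>0$. (Internally this uses Lemma \ref{lemma-reduction to proj system} for forward completeness, Lemma \ref{comparison lemma} to bound $f$, and Lemma \ref{first observations} for the monotonicity of $f$, whence $\ell>0$.) \textbf{Step 2 (polynomial decay with all derivatives).} Plugging \eqref{limit-abcf} into the Proposition of this section on polynomial decay and setting $\tilde a=t^{-1}a-1,\ \tilde b=t^{-1}b-1,\ \tilde c=t^{-1}c-1,\ \tilde f=\ell^{-1}f-1$, there is $\gamma>0$ with $\tilde a^{(k)}(t),\tilde b^{(k)}(t),\tilde c^{(k)}(t),\tilde f^{(k)}(t)=\mathcal O(t^{-k-\gamma})$ for all $k\ge 0$; recall this rests on a centre-manifold estimate (via \cite{Carr}), the centre manifold being the line of pure $t$-translations, on which the induced flow is trivial.

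\textbf{Step 3 (matching the ALC model).} It then remains to check that Steps 1--2 are exactly the input required by the definition of an ALC Spin(7)-manifold. The model is the one described in Remark \ref{remark-ALC asymp from coeff}: the $\U(1)$-bundle $P\to C(F_3)$ obtained from $(R,\infty)_t\times N(1,-1)\to(R,\infty)_t\times F_3$ with connection a constant multiple of $e_7$ and fibre length $\ell$, carrying the Spin(7)-structure $\psi_P$ given by \eqref{Spin(7)-structure} with $(a,b,c,f)=(t,t,t,\ell)$ and the metric $g_P=dt^2+t^2(e_1^2+\dots+e_6^2)+\ell^2 e_7^2$; its $\SU(3)\times\U(1)$-invariance is automatic by Remark \ref{remark-inv-spin(7)}. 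Taking $F$ to be the identity map of $(T,\infty)\times N(1,-1)$ for $T=R$ large enough that the asymptotics of Steps 1--2 are in force (no double cover is needed, since $N(1,-1)\to F_3$ is already the circle bundle carrying the model), one expands $\psi$ via \eqref{Spin(7)-structure} in the $g_P$-orthonormal coframe $\{dt,\,te_1,\dots,te_6,\,\ell e_7\}$: every coefficient of $F^*\psi-\psi_P$ in this coframe is a polynomial in $\tilde a,\tilde b,\tilde c,\tilde f$ vanishing at the origin and depending on $t$ alone, so by Step 2 each coefficient together with its $k$-th $t$-derivative is $\mathcal O(t^{-k-\gamma})$. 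Feeding this, together with the bounded-geometry behaviour of the Levi-Civita connection of $g_P$ in that coframe (the six ``cone'' directions scale like $t^{-1}$, the circle direction is of order one, and the connection coefficients are bounded), into the pointwise norms gives $|\nabla_P^j(F^*\psi-\psi_P)|_{g_P}=\mathcal O(t^{\nu-j})$ for all $j\ge 0$ with $\nu=-\gamma<0$, which is precisely the ALC condition with asymptotic circle length $\ell$. Together with Remark \ref{remark-inv-spin(7)} and the earlier computation of the holonomy, this shows that $(a,b,c,f)$ defines an $\SU(3)\times\U(1)$-invariant ALC Spin(7) metric on $(T,\infty)\times N(1,-1)$.

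The genuine content lies in the results invoked in Steps 1--2 -- the bound on $f$ extracted from the comparison Lemma \ref{comparison lemma} and the centre-manifold decay estimate -- so the present statement is largely an assembly. The one step that must be written out with care is Step 3: checking that the exponential-in-$\log t$ decay of $(\tilde a,\tilde b,\tilde c,\tilde f)$ and of all their derivatives is faithfully transmitted to the coordinate-free quantities $|\nabla_P^j(F^*\psi-\psi_P)|_{g_P}$ forces one to keep track of the exact $t$-scaling of the invariant frame forms and of the $g_P$-Christoffel symbols in that frame. This is the standard bookkeeping of, e.g., \cite{FHN2}, and is where the argument, though routine, requires attention.
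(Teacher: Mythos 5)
Your proposal is correct and takes essentially the same approach as the paper: Proposition~\ref{prop-ALC crit point} is stated in the paper as an immediate consequence of the two preceding (unnumbered) results of Section~\ref{section-ALC}, which are exactly your Steps~1 and~2. Step~3 — translating the polynomial decay of $(\tilde a,\tilde b,\tilde c,\tilde f)$ and their derivatives into the coordinate-free estimate $|\nabla_P^j(F^*\psi-\psi_P)|_{g_P}=\mathcal O(t^{\nu-j})$ by tracking the $t$-scaling of the invariant coframe and the $g_P$-connection — is the bookkeeping the paper leaves implicit, and you flag and outline it correctly.
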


\section{Analysis of the ODE system}

\label{section-alaysis}

In the remainder of the paper we want to investigate which members of the families $\Psi_{\mu}$, $\Upsilon_{\tau}$ and $\Psi^{\mathrm{cs}}_{\lambda}$
give rise to forward complete Spin(7)-holonomy metrics and determine the asymptotic type of complete solutions. 
Once we have proven Theorems \ref{theorem1} and \ref{thmB}, it will be clear how to construct the families in Theorem \ref{thm-sing}.
As discussed in Section \ref{section-coord change}, solutions of the ODE system (\ref{ODE system}) are best studied by looking at their associated solutions of the system (\ref{ODE system in good coord}) described in Remarks \ref{range of s for S5 sol}, \ref{remark-Upsilon-assoc-sol} and at the end of section \ref{section-CS}.
 
The following Lemma will allow us to compare 
the local solutions for different parameters.

\begin{lemma}
\label{comparison of solutions}
Suppose $(X_1,Y_1,Z_1)$ and $(X_2,Y_2,Z_2)$ are two solutions of the system (\ref{ODE system in good coord}), where all functions are positive. Furthermore, suppose that at some 
time $s_0\in\mathbb{R}$ we have
\begin{align*}
X_1 > X_2,
\quad
Y_1 > Y_2,
\quad
Z_1 < Z_2.
\end{align*}
Then this condition is forward preserved as long as all functions stay positive.
\end{lemma}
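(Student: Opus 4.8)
The plan is to argue by contradiction using the first time the open cone $\{X_1>X_2,\ Y_1>Y_2,\ Z_1<Z_2\}$ is exited. Set $u=X_1-X_2$, $v=Y_1-Y_2$, $w=Z_2-Z_1$, so the hypothesis reads $u(s_0),v(s_0),w(s_0)>0$ and the claim is that $u,v,w$ stay positive forward as long as all the functions remain positive. Suppose not, and let $s_1>s_0$ be the infimum of times at which one of $u,v,w$ vanishes (all functions still positive on $[s_0,s_1]$). By continuity $u,v,w\ge 0$ on $[s_0,s_1]$ and at least one of them equals $0$ at $s_1$.

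The key point is that, for the differences, the system (\ref{ODE system in good coord}) is \emph{cooperative}: subtracting the relevant equations and using positivity, one finds at $s_1$, whenever the corresponding difference vanishes,
\begin{align*}
u(s_1)=0 &\ \Longrightarrow\ \dot u(s_1)=2X_1(s_1)\,w(s_1),\\
v(s_1)=0 &\ \Longrightarrow\ \dot v(s_1)=2\bigl(Y_1(s_1)+1\bigr)w(s_1),\\
w(s_1)=0 &\ \Longrightarrow\ \dot w(s_1)=3Z_1(s_1)\bigl(u(s_1)+v(s_1)\bigr).
\end{align*}
Each identity is a one-line cancellation: e.g.\ if $X_1=X_2$ at $s_1$ then $\dot X_1-\dot X_2=2X_1(2-2X_1-Z_1)-2X_2(2-2X_2-Z_2)=2X_1(Z_2-Z_1)$, and likewise for the other two. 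Since $X_1,Y_1,Z_1$ are positive, each right-hand side is $\ge 0$, and it is strictly positive unless the difference(s) on the right also vanish at $s_1$.

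Now run the case analysis at $s_1$. If exactly one of $u,v,w$ vanishes there, its derivative at $s_1$ is strictly positive, contradicting that it just decreased to $0$ from positive values. If exactly two vanish, say $u,w$ with $v>0$, then $\dot w(s_1)=3Z_1(s_1)v(s_1)>0$, again contradicting $w>0$ on $[s_0,s_1)$, $w(s_1)=0$; the sub-cases $\{v,w\}$ and $\{u,v\}$ are handled identically, always using the one surviving positive difference to drive a vanishing one strictly inward ($\dot w(s_1)=3Z_1u(s_1)>0$, resp.\ $\dot u(s_1)=2X_1w(s_1)>0$). Finally, if all three vanish at $s_1$, then $(X_1,Y_1,Z_1)(s_1)=(X_2,Y_2,Z_2)(s_1)$; since the right-hand side of (\ref{ODE system in good coord}) is polynomial, hence locally Lipschitz, uniqueness of solutions forces $(X_1,Y_1,Z_1)\equiv(X_2,Y_2,Z_2)$, contradicting $X_1(s_0)>X_2(s_0)$. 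Hence no such $s_1$ exists and the condition is forward preserved.

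The only mildly delicate point I anticipate is the bookkeeping in the multi-vanishing configurations: one must check that in every case there is at least one vanishing difference whose derivative is \emph{strictly} positive, which is precisely guaranteed by the non-negativity of the off-diagonal coupling coefficients above, supplemented by the appeal to ODE uniqueness to dispose of the fully degenerate case where the two trajectories meet. No estimates beyond the sign computations are needed.
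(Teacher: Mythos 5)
Your proof is correct and uses essentially the same computations as the paper: both compute $\dot u = 2X_1 w$, $\dot v = 2(Y_1+1)w$, $\dot w = 3Z_1(u+v)$ on the relevant boundary faces and appeal to ODE uniqueness to rule out the fully degenerate case where all three differences vanish. The only cosmetic difference is that the paper first isolates the $Z$ inequality (showing it is strictly preserved whenever $X_1\geq X_2$, $Y_1\geq Y_2$) and then treats $X$ and $Y$, whereas you run a single first-exit-time case analysis over all three differences simultaneously; the underlying argument is the same.
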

\begin{proof}
We start by looking at the quantity $Z$.
Assume all three inequalities are preserved until some time $s_1 > s_0$ 
when we have $Z_1(s_1)=Z_2(s_1)=\alpha>0$.
Note that at $s_1$ there must be a strict inequality for either $X_1 > X_2$ or $Y_1 > Y_2$. Otherwise the solutions would be the same. Furthermore at $s_1$ we have
\begin{align*}
\dot{Z}_2-\dot{Z}_1
=
3\alpha
((X_1-X_2)+(Y_1-Y_2)) > 0.
\end{align*}
Therefore, $Z_1 < Z_2$ is strictly preserved as long as $X_1 \geq X_2$ and $Y_1 \geq Y_2$. Given that, suppose that at $s_1 > s_0$ we have  $X_1(s_1)=X_2(s_1)=\alpha>0$.
Then at the same time
\begin{align*}
\dot{X}_2-\dot{X}_1
=
2\alpha(Z_1-Z_2) < 0.
\end{align*}
If we suppose that at $s_1 > s_0$ we have  $Y_1(s_1)=Y_2(s_1)=\alpha>0$, then at the same time
\begin{align*}
\dot{Y}_2-\dot{Y}_1
=
-2(1+\alpha)(Z_2-Z_1) < 0.
\end{align*}
All cases lead to a contradiction.
\end{proof}

As an immediate application of Lemma \ref{comparison of solutions} we obtain

\begin{lemma}
\label{comparison for Phu-mu}
Denote by $(X_1,Y_1,Z_1)$ and $(X_2,Y_2,Z_2)$ the two solutions of the system (\ref{ODE system in good coord}) corresponding to 
\begin{itemize}
\item 
$\Psi_{\mu_1}$ and $\Psi_{\mu_2}$ for $0 < \mu_1 < \mu_2$, respectively, or
\item
$\Upsilon_{\tau_1}$ and $\Upsilon_{\tau_2}$ for $\tau_1 < \tau_2$, respectively.
\end{itemize}
In both cases we have  
\begin{align*}
X_1 > X_2,
\quad
Y_1 > Y_2,
\quad
Z_1 < Z_2,
\end{align*}
as long as the solutions exist.
\end{lemma}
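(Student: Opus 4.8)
The plan is to derive this directly from Lemma \ref{comparison of solutions}. That lemma shows the ordering $X_1>X_2$, $Y_1>Y_2$, $Z_1<Z_2$ is forward invariant under the flow of (\ref{ODE system in good coord}) as long as all three functions stay positive, so it suffices to produce one time $s_0$ in the common domain of the two associated solutions at which the three strict inequalities hold. The natural place to look is near the singular orbit, i.e.\ as $s\to-\infty$, where both solutions are controlled by the short-distance asymptotic expansions; there positivity of $X,Y,Z$ is immediate, and $X>0$, $Z>0$ persist by Lemma \ref{bounds in xyz coords} while $Y>0$ holds exactly as long as the solutions exist, so the hypotheses of Lemma \ref{comparison of solutions} are met.

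Next I would read off the relevant expansions at the level of the arc-length parameter $t$ (distance from the fixed singular orbit, which has the same size for the two solutions by our normalisation). For the $\Psi_\mu$-family, (\ref{short distance asy exp in proj coord}) shows that the coefficients of $X$ through order $t^2$, of $Y$ at order $t^0$, and of $Z$ are $\mu$-independent, and the first $\mu$-dependent coefficients are $-\tfrac{8}{3}\mu$ (the $t^3$ term of $X$), $-\tfrac{4}{3}\mu$ (the $t$ term of $Y$) and $2\mu$ (the $t$ term of $Z$). Hence for $0<\mu_1<\mu_2$ and all small $t>0$ one has $X_1(t)-X_2(t)=\tfrac{8}{3}(\mu_2-\mu_1)t^3+\mathcal{O}(t^4)>0$, $Y_1(t)-Y_2(t)=\tfrac{4}{3}(\mu_2-\mu_1)t+\mathcal{O}(t^2)>0$ and $Z_1(t)-Z_2(t)=-2(\mu_2-\mu_1)t+\mathcal{O}(t^2)<0$. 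The identical bookkeeping with (\ref{short-dist-asy-xyz-cp2}) handles the $\Upsilon_\tau$-family: there the first $\tau$-dependent coefficients all occur at order $t^4$ and equal $\tfrac{40-\tau}{72}$, $-\tfrac{32+\tau}{12}$ and $\tfrac{-56+\tau}{24}$ for $X$, $Y$, $Z$ respectively, so $\tau_1<\tau_2$ gives $X_1>X_2$, $Y_1>Y_2$, $Z_1<Z_2$ for all small $t>0$.

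The one point that is not purely formal is translating these equal-$t$ inequalities into equal-$s$ inequalities, since Lemma \ref{comparison of solutions} is stated for the $s$-parametrised system and, by Remark \ref{associated solutions}, the reparametrisation $dt=\tfrac{ab}{c}\,ds$ carries a free additive constant that may differ for the two solutions. This is resolved by the leading-order relation between the parameters: near $S^5$ we have $(a,b,c)\to(2t,1,1)$, so $\tfrac{ab}{c}=2t(1+o(1))$ and, after fixing the additive constant appropriately, $t=e^{2s}(1+o(1))$ as $s\to-\infty$ for both values of $\mu$ (near $\CP{2}$, $(a,b,c)\to(1,t,1)$, $\tfrac{ab}{c}=t(1+o(1))$ and $t=e^{s}(1+o(1))$). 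Since the two time-changes then agree to leading order, while the discrepancy between $t_1(s)$ and $t_2(s)$ enters only at strictly higher order than the leading, sign-determining term of each of the three differences computed above, the inequalities $X_1>X_2$, $Y_1>Y_2$, $Z_1<Z_2$ hold for all sufficiently negative $s$. Choosing one such $s_0$ and invoking Lemma \ref{comparison of solutions} gives the claim. I do not expect a genuine obstacle here: given the comparison lemma and the explicit expansions this is elementary, and the only subtlety — the matching of time parameters — is dispatched by the routine $s\leftrightarrow t$ asymptotics just described.
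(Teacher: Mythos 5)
Your overall strategy matches the paper's: read the strict inequalities off the short-distance expansions, then propagate them forward with Lemma~\ref{comparison of solutions}. You are also right to flag, more explicitly than the paper does, that the expansions are in the arc-length parameter $t$ while the comparison lemma is stated in $s$, and that matching the two requires a choice of the additive constant in $s=\int\tfrac{c}{ab}\,dt$.

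However, your resolution of that point — fixing the constants so that $t_i(s)=e^{2s}(1+o(1))$ (resp.\ $e^{s}(1+o(1))$) and asserting that the discrepancy between $t_1(s)$ and $t_2(s)$ contributes at ``strictly higher order'' than the sign-determining term — is not correct, and in the $\Upsilon_\tau$ case it actually breaks down. Working it out: near $\CP{2}$ one finds $s=\log t+\tfrac{16+\tau}{48}\,t^2+O(t^4)$, hence $t_i(s)=e^{s}\bigl(1-\tfrac{16+\tau_i}{48}e^{2s}+O(e^{4s})\bigr)$, so $t_1(s)-t_2(s)=\tfrac{\tau_2-\tau_1}{48}e^{3s}+\cdots$, which is of order $t^3$, exactly the order needed to perturb the $t^4$ coefficients through the leading $t^2$ term. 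Substituting into $X(t)=1-\tfrac13 t^2+\tfrac{40-\tau}{72}t^4+O(t^5)$ gives $X(s)=1-\tfrac13 e^{2s}+\tfrac{7}{9}e^{4s}+O(e^{5s})$: the $\tau$-dependent terms of order $e^{4s}$ cancel completely. The same happens for $Z$. So with your choice of synchronization, the sign of $X_1(s)-X_2(s)$ and $Z_1(s)-Z_2(s)$ is not determined by the expansions given in~\eqref{short-dist-asy-xyz-cp2}, and the step ``hence the inequalities hold for all sufficiently negative $s$'' does not follow. (In the $\Psi_\mu$ case the discrepancy for $X$ is likewise of the same order $t^3$ as the sign-determining term $\tfrac83(\mu_2-\mu_1)t^3$, though there the two contributions happen to reinforce rather than cancel, so you get away with it by luck.)

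The gap is easy to close, and doing so is simpler than what you attempted: the additive constant in $s$ is a free normalization for each solution, so one should fix it by requiring the two solutions to pass through the same reference radius $t_{\mathrm{ref}}$ at the same value $s_0$, i.e.\ choose the constants so that $t_1(s_0)=t_2(s_0)=t_{\mathrm{ref}}$ for a small common $t_{\mathrm{ref}}>0$. At $s=s_0$ the comparison is then literally a comparison at equal $t$, and the expansions~\eqref{short distance asy exp in proj coord} and~\eqref{short-dist-asy-xyz-cp2} give the three strict inequalities directly; Lemma~\ref{comparison of solutions} then propagates them forward. This is presumably what the paper means by ``the statement is true for small times.'' With this fix your argument is correct and coincides with the paper's.
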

\begin{proof}
By the short distance asymptotic expansions (\ref{short distance asy exp in proj coord}), (\ref{short-dist-asy-xyz-cp2}) the statement is true for small times. 
By Lemma \ref{comparison of solutions} this is preserved as long as all functions stay positive. Any of the $(X,Y,Z)$ coordinates becoming zero means that one of the corresponding functions $a, b, c, f$ must be zero, and thus that the respective solution develops a singularity.
\end{proof}

A simple consequence of the above comparison argument is that the families $\Psi_{\mu}$ and $\Upsilon_{\tau}$ contain at most one AC space.

\begin{lemma}
\label{uniqueness-simple}
Suppose $(X_1,Y_1,Z_1)$ and $(X_2,Y_2,Z_2)$ are two complete solutions of the system (\ref{ODE system in good coord}), where all functions are positive and satisfy 
\begin{align*}
X_1 > X_2,
\quad
Y_1 > Y_2,
\quad
Z_1 < Z_2,
\end{align*}
for all times. Then not both solutions can converge to the cone $(X_c, Y_c, Z_c)$ as $s\rightarrow \infty$.
\end{lemma}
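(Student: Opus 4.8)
The plan is to argue by contradiction using the linearisation of the system \eqref{ODE system in good coord} at the cone fixed point $(X_c,Y_c,Z_c)$, which was computed in Remark \ref{critical points} to have eigenvalues (rounded) $-4.1$, $-1.7$, $1.4$. Since both solutions are assumed to converge to this fixed point, they each lie on the $2$-dimensional stable manifold $W^s$ of $(X_c,Y_c,Z_c)$. By Proposition \ref{lemma - crit point cone is AC} each is the associated solution of some AC end $\Psi^{\mathrm{ac}}_{\alpha_i,\beta_i}$, and by the analysis at the end of Section \ref{section-CS} the trajectories on $W^s$ are parametrised, up to reparametrisation in $s$, by a circle: concretely, the leading asymptotics as $s\to\infty$ are governed by the two negative eigenvalues $\nu_0\approx-7.46$ and $\nu_1\approx-3.12$ of $-L$ (equivalently the two eigenvalues of the linearisation of \eqref{ODE system in good coord} at $(X_c,Y_c,Z_c)$ with negative real part), via the expansion \eqref{asy-exp-AC-ends}.

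The key step is to show that the strict ordering $X_1>X_2$, $Y_1>Y_2$, $Z_1<Z_2$ is incompatible with both trajectories lying on $W^s$ and converging to the same point. First I would observe that the ordering passes to the limit to give $X_c\ge X_c$ etc., which is vacuous, so one must look at the rate of approach. Writing $u_i = (X_i,Y_i,Z_i) - (X_c,Y_c,Z_c)$, each $u_i(s)$ decays like a combination of $e^{\nu_1 s}$ and $e^{\nu_0 s}$ as $s\to\infty$, with the slower mode $e^{\nu_1 s}$ dominating unless its coefficient vanishes. The eigenvector for $\nu_1$ in $(X,Y,Z)$-coordinates is, up to sign, the projection of $(-10.6,10.8,-5.1,10)$ from the proof of Proposition \ref{CS and AC ends}; one checks that in its $X$- and $Y$-components this eigenvector has \emph{opposite} signs (and its $Z$-component has yet another relation). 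Hence a trajectory approaching along the generic ($\nu_1$) mode cannot be squeezed between another trajectory in the way the hypothesis demands: if $X_1>X_2$ forces the $\nu_1$-coefficient of $u_1-u_2$ to be non-negative in the $X$-slot, then the $Y$-slot of that same difference has the wrong sign to also have $Y_1>Y_2$, unless the $\nu_1$-coefficients of $u_1$ and $u_2$ coincide. Chasing this down the eigenvector hierarchy (first the $\nu_1$ coefficients must agree, then the $\nu_0$ coefficients, then, since $W^s$ is $2$-dimensional and the two modes determine the trajectory up to the flow, the trajectories must coincide up to a shift in $s$), one concludes $u_1$ and $u_2$ are time-translates of one another; but translating a trajectory that converges to a fixed point strictly decreases \emph{all} of $X,Y,Z$ (or increases all of them) for large $s$ along the flow near the sink direction, contradicting the mixed ordering $X_1>X_2, Y_1>Y_2, Z_1<Z_2$.

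Alternatively, and perhaps more cleanly, I would avoid the eigenvector bookkeeping and argue as follows: the two trajectories together with the fixed point bound, in the $2$-manifold $W^s$, a region; by Lemma \ref{comparison of solutions} the ordering is preserved for all $s$, so the two trajectories never cross and the ``strip'' between them is flow-invariant and shrinks onto $(X_c,Y_c,Z_c)$. Parametrise $W^s$ so that trajectories correspond to points of the circle $S^1$ of Section \ref{section-CS}; the forward flow acts on this circle, and since $(X_c,Y_c,Z_c)$ is the unique $\omega$-limit, this circle action has no fixed points, hence every orbit is dense or the action is a rotation — in either case two distinct trajectories are related by a reparametrisation only if they are genuinely the same trajectory, i.e. $S^1$ is a single orbit of the flow (which it is not, being transverse to the flow), contradiction. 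This reduces everything to the transversality of the embedded $S^1$ to the flow, already established in Section \ref{section-CS}, together with the fact that translating along a non-constant flow line near a hyperbolic sink-in-two-directions changes each coordinate monotonically near the limit, which contradicts having $Z_1<Z_2$ while $X_1>X_2,Y_1>Y_2$.

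The main obstacle I anticipate is making the phrase ``translating along the flow line changes each coordinate monotonically near the limit, in a way incompatible with the mixed ordering'' fully rigorous: this is really the statement that the difference $u_1-u_2$ of two time-translated solutions, to leading order, is a \emph{single} eigenmode $e^{\nu_1 s}$ times a fixed eigenvector (the $\nu_1$-eigenvector of the linearisation), so the signs of its three components are rigidly linked and cannot realise the sign pattern $(+,+,-)$. Verifying that the $\nu_1$-eigenvector of the linearisation at $(X_c,Y_c,Z_c)$ does not have the sign pattern $(+,+,-)$ (nor $(-,-,+)$) is a concrete finite computation with the matrix displayed in Remark \ref{critical points}, and that is the one place where one genuinely has to compute rather than invoke a general principle. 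Everything else is soft: invariance of the ordering (Lemma \ref{comparison of solutions}), the structure of $W^s$ as a circle's worth of AC ends (Section \ref{section-CS}, Proposition \ref{lemma - crit point cone is AC}), and the Hartman--Grobman picture near the hyperbolic fixed point.
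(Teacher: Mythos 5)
Your proposal takes a genuinely different route from the paper. The paper avoids the stable-manifold machinery entirely and argues directly at the level of the $Y$-equation: using $Z_1 < Z_2$ and $1+Y_2 > 0$, it bounds
\begin{align*}
\dot Y_1 - \dot Y_2 > (Y_1-Y_2)\bigl(4 - 4Y_1 - 4Y_2 - 2Z_1\bigr),
\end{align*}
and since near the cone $4-8Y_c-2Z_c=\tfrac{14}{\sqrt{5}}-6\approx 0.26>0$, the already-positive quantity $Y_1-Y_2$ satisfies a linear differential inequality with positive coefficient, is eventually monotone increasing, and hence cannot converge to zero. That is a one-step, elementary contradiction that requires no information about eigenvalues, eigenvectors, or the structure of $W^s$.

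Your eigenvector-sign observation is correct and is, in spirit, a finer version of what the paper's argument extracts from the $Y$-component alone: I checked that the $\nu_1$- and $\nu_0$-eigenvectors of the linearisation of \eqref{ODE system in good coord} at $(X_c,Y_c,Z_c)$ have sign patterns $(-,+,+)$ and $(+,+,+)$ respectively (consistent with pushing forward the $4\times 4$ eigenvectors from the proof of Proposition \ref{CS and AC ends} under the Jacobian of $(X_1,\dots,X_4)\mapsto(X,Y,Z)$), and neither can realise $(+,+,-)$ up to sign. However, there are genuine gaps in the way you close the argument. First, ``chasing down the hierarchy'' is not as clean as you suggest: $2\nu_1\approx -3.4$ decays \emph{slower} than $\nu_0\approx -4.1$, so if the leading $\nu_1$-coefficients of $u_1$ and $u_2$ agree, the next term in $u_1-u_2$ is the quadratic $e^{2\nu_1 s}$ contribution, not the $\nu_0$-mode, and you have not computed its sign pattern. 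Nailing this down would require either invoking a smooth linearisation theorem (Sternberg, after checking non-resonance of $\{\nu_0,\nu_1\}$) or carrying out the asymptotic expansion to second order, neither of which is trivial. Second, the ``alternative, cleaner'' argument is not correct: the forward flow does not act on the circle $S^1$ parametrising trajectories in $W^s$ — each trajectory is a single point of $S^1$, which the flow fixes — so there is no circle action, no fixed-point-free rotation, and no contradiction to be extracted that way. My recommendation is to discard both the hierarchy chase and the circle-action sketch and instead look for a single scalar comparison quantity, as the paper does with $Y_1-Y_2$; the eigenvector signs you computed in fact explain \emph{why} such a comparison works in the $Y$-variable, but the comparison itself is what yields a short, self-contained proof.
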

\begin{proof}
The positivity and the given ordering of the two solutions imply
\begin{align*}
\dot{Y}_1-\dot{Y}_2
&=
4(Y_1-Y_2)-4(Y_1^2-Y_2^2)-2 Z_1 (1+Y_1)+2 Z_2 (1+Y_2)
\\
&>
4(Y_1-Y_2)-4(Y_1^2-Y_2^2)-2 Z_1 (1+Y_1)+2 Z_1 (1+Y_2)
\\
&=
(Y_1-Y_2)(4-4Y_1-4Y_2-2 Z_1).
\end{align*}
If both solutions converge to the cone,
for $\epsilon>0$ arbitrarily small and all sufficiently large times we have
\begin{align*}
4-4Y_1-4 Y_2 - 2 Z_1 > 4 - 8 Y_c -2 Z_c - \varepsilon
=\frac{14}{\sqrt{5}}-6-\varepsilon \approx 0.26 -\varepsilon.
\end{align*}
Therefore, the function $Y_1-Y_2$ is monotone increasing for large times. In particular, $Y_1$ and $Y_2$ cannot have the same limit which is a contradiction.
\end{proof}

A further fundamental property of the dynamical system \eqref{ODE system in good coord} is that complete trajectories of interest are flow lines connecting fixed points.

\begin{proposition}\label{prop-crit point convergence}
Let $(X,Y,Z)$ be a solution of the system (\ref{ODE system in good coord}) which is contained inside a compact subset of $\mathbb{R}^3$ for all 
times and in particular complete.
Then the solution converges to a fixed point as $s\rightarrow\infty$.
\end{proposition}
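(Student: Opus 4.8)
The plan is to run a LaSalle-type argument based on the fact --- not yet recorded in the excerpt --- that the system \eqref{ODE system in good coord} is a gradient flow. Since the solution is forward complete and stays inside a compact set, its $\omega$-limit set $\Omega$ is non-empty, compact, connected and flow-invariant, and (by positivity) $\Omega\subset\{X\ge0,\ Y\ge0,\ Z\ge0\}$. The fixed points of the polynomial system \eqref{ODE system in good coord} form a finite, hence discrete, set (the points listed in \eqref{fixed points}, plus one further point outside this region). Therefore it suffices to prove that $\Omega$ consists entirely of fixed points: a connected subset of a discrete set is a single point, and $(X(s),Y(s),Z(s))$ then converges to it as $s\to\infty$.

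The key step is to produce the Lyapunov function. I would verify that, for
\begin{align*}
U(X,Y,Z)=2X-X^{2}+4Y-Y^{2}-4\log(1+Y)+\tfrac{5}{3}Z-XZ-YZ-\tfrac{2}{3}Z^{2},
\end{align*}
the right-hand side of \eqref{ODE system in good coord} satisfies the polynomial identities $\dot X=2X\,\partial_X U$, $\dot Y=2(1+Y)\,\partial_Y U$, $\dot Z=3Z\,\partial_Z U$; equivalently, on $\{X>0,\ Z>0\}$ the system \eqref{ODE system in good coord} is the ascending gradient flow of $U$ with respect to the metric $\tfrac1{2X}dX^{2}+\tfrac1{2(1+Y)}dY^{2}+\tfrac1{3Z}dZ^{2}$. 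These identities give, along any solution with $X,Z\ge0$ and $Y>-1$ (in particular along the geometric ones, where $X=a^{2}/c^{2}>0$, $Y=b^{2}/c^{2}>0$, $Z=abf/c^{3}>0$),
\begin{align*}
\dot U=2X(\partial_X U)^{2}+2(1+Y)(\partial_Y U)^{2}+3Z(\partial_Z U)^{2}\ \ge\ 0,
\end{align*}
and $\dot U=0$ forces each summand, hence each of $\dot X,\dot Y,\dot Z$, to vanish. So $U$ is non-decreasing along the flow and its stationary locus $\{\dot U=0\}$ is exactly the fixed-point set; note also that $U$ is smooth on $\{Y>-1\}$, which contains the closure of the trajectory, and that the displayed identity is a polynomial identity valid even where the metric degenerates ($X=0$ or $Z=0$), so those faces need no separate treatment.

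To conclude: $U$ is continuous on the compact closure of the trajectory and non-decreasing along the flow, so $U(X(s),Y(s),Z(s))$ increases to a finite limit $U_{\infty}$; by continuity $U\equiv U_{\infty}$ on $\Omega$. Since $\Omega$ is flow-invariant, $U$ is constant along every orbit contained in $\Omega$, hence $\dot U\equiv0$ on $\Omega$, and therefore $\Omega\subset\{\dot U=0\}=\mathrm{Fix}$. Together with the first paragraph this proves $(X,Y,Z)$ converges to a fixed point. The only genuinely non-routine ingredient is discovering the pair $(U,g)$ exhibiting \eqref{ODE system in good coord} as a gradient flow; once written down, the verification of the gradient identities and the invariance-principle argument are entirely mechanical, so I do not expect any real obstacle beyond that.
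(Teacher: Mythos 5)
Your proof is correct but takes a genuinely different route from the paper. I have checked the three claimed identities with $U(X,Y,Z)=2X-X^{2}+4Y-Y^{2}-4\log(1+Y)+\tfrac{5}{3}Z-XZ-YZ-\tfrac{2}{3}Z^{2}$: indeed $\partial_X U=2-2X-Z$, $\partial_Y U=4-2Y-\tfrac{4}{1+Y}-Z$, $\partial_Z U=\tfrac{5}{3}-X-Y-\tfrac{4}{3}Z$, and multiplying by $2X$, $2(1+Y)$, $3Z$ respectively reproduces the right-hand sides of \eqref{ODE system in good coord}, so $\dot U=2X(\partial_X U)^2+2(1+Y)(\partial_Y U)^2+3Z(\partial_Z U)^2\ge 0$ on $\{X\ge 0,\ Y>-1,\ Z\ge 0\}$ and the stationarity locus of $U$ is the fixed-point set. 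The paper instead proves eventual monotonicity of $X$ and $Y$ by a chamber-transition argument: it computes the sign of the second derivative at each interior critical point, draws the directed graph of allowed sign-chamber transitions for $(\dot X,\dot Y,\dot Z)$, observes that the only cycles involve a change of sign only in $\dot Z$, deduces that $X,Y$ are eventually monotone and bounded hence convergent, and then handles $Z$ by a comparison argument against the logistic equation. Both proofs rely on the same implicit sign assumptions ($X,Z>0$, $Y>-1$), which the paper also uses silently through $\log X$, $\log Z$, and the factor $(1+Y)$. Your Lyapunov/LaSalle argument is cleaner and more conceptual once $U$ is exhibited, and it gives additional structural information (a gradient-like structure that rules out periodic orbits globally, not just for bounded ones); the cost is that discovering $(U,g)$ is the genuinely non-obvious step. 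The paper's chamber argument is more elementary and requires no such ansatz, at the price of a case-by-case combinatorial analysis and a separate convergence argument for $Z$.
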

\begin{proof}
We will first show that all three functions $X(s), Y(s), Z(s)$ eventually become monotone.
At a critical point of $X(s),Y(s)$ or $Z(s)$, the respective second derivative is given by
\begin{align}
\frac{1}{2}\frac{d^2}{ds^2}\log X
&= 
-\dot{Z},
\\
\frac{d^2}{ds^2} Y 
&= -2(1+Y) \dot{Z},
\\
\frac{d^2}{ds^2}\log Z
&=
-3\dot{X}-3\dot{Y}.
\end{align}
If for example we denote by $(+-+)$ the chamber where $X(s)$ is increasing, $Y(s)$ decreasing and $Z(s)$ increasing, we get the following diagram:

\begin{center}
\begin{tikzcd}[row sep=scriptsize, column sep=scriptsize]
(-++) \arrow[rr,leftarrow] \arrow[dd,leftrightarrow] \arrow[dr,rightarrow]  && 
(+++) \arrow[dd] \arrow[dr]
& 
\\
& 
(--+)  \arrow[rr,crossing over, leftarrow] 
& & 
(+-+) \arrow[dd,leftrightarrow]
\\
(-+-) \arrow[rr] \arrow[dr,leftarrow] 
& & 
(++-) \arrow[dr,leftarrow]
&
\\
& (---) \arrow[uu,crossing over] \arrow[rr] & & (+--)
\end{tikzcd}
\end{center}
Here an arrow between two chambers indicates that a solution of (\ref{ODE system in good coord}) can transition from one chamber to the other in the direction of the arrow.
The only cycles in the diagram are 
$(-+-) \rightarrow (-++) \rightarrow (-+-)$ and
$(+--)\rightarrow(+-+)\rightarrow(+--)$. We see that $X$ and $Y$ eventually become monotone and only $Z$ can possibly oscillate.
Because $X$ and $Y$ are bounded and monotone, there exist $X_{\infty}$ and 
$Y_{\infty}$ such that $X\rightarrow X_{\infty}$ and $Y\rightarrow Y_{\infty}$ as $s\rightarrow \infty$.

If we set $L=\frac{1}{4}(5-3X_{\infty}-3Y_{\infty})$, then after a finite time
\begin{align*}
4Z(L-\varepsilon -Z) < \dot{Z} < 4Z (L+\varepsilon-Z)
\end{align*}
for arbitrarily small $\varepsilon > 0$.
If at a sufficiently large time $Z < L-\varepsilon$, then either $Z$ is monotone increasing from then on or $Z > L-\varepsilon$ after some time, which is preserved. If at a sufficiently large time $Z > L+\varepsilon$, then either $Z$ is monotone decreasing from then on or $Z < L-\varepsilon$ after some time, which is preserved. We can conclude that either $Z$ becomes monotone or converges to $L$. Because $Z$ is globally bounded, in either case $Z$ converges to some $Z_{\infty}$ as $s\rightarrow\infty$. It is clear that $(X_{\infty},Y_{\infty},Z_{\infty})$ is a fixed point of the system (\ref{ODE system in good coord}).
\end{proof}

By the asymptotic expansions (\ref{short distance asy exp in proj coord}) and \eqref{short-dist-asy-xyz-cp2} for $\Psi_{\mu}$ and $\Upsilon_{\tau}$, respectively, the trajectories of the associated solutions of the system \eqref{ODE system in good coord} initially start out in the cube $0 < X < 1, 0 < Y < 1, 0 < Z < 5/4$.
By Lemma \ref{bounds in xyz coords} these trajectories either exit this cube at the face $Y=0$, where the associated Spin(7)-structure degenerates, or the trajectory is complete and by Proposition \ref{prop-crit point convergence}
converges to a fixed point as $s\rightarrow \infty$. Therefore, the growth behaviour of the function $Y(s)$ of the solution of interest is of particular importance. If for instance $Y(s)$ becomes monotone increasing
for large $s$ while the solution is still contained inside the cube, the  solution must be complete. This motivates us to study extrema of the function $Y(s)$.

\begin{lemma}
\label{lemma-growth of Y}
\begin{compactenum}[(i)]
\item
If $Y \geq 0$, for the growth of $Y$ we have 
\begin{align*}
&\dot{Y} < (>,=)\ 0
\quad
\Leftrightarrow
\quad
Z >(<,=)\ 2Y\frac{1-Y}{1+Y}.
\end{align*}
\item In the following we set
\begin{align*}
Q(X,Y):= -3X+\frac{5Y^2-6Y+5}{1+Y}.
\end{align*} 
If $Y$ has a minimum at time $s_0$ with $Y(s_0) \geq 0$, then $Q(X(s_0),Y(s_0)) \leq 0$. 
If $Y$ has a maximum at time $s_0$ with $Y(s_0) \geq 0$, then $Q(X(s_0),Y(s_0)) \geq 0$.
\end{compactenum}
\end{lemma}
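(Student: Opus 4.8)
The plan is to compute the first two time-derivatives of $Y$ along solutions and read off the sign conditions. For part (i), I would take the evolution equation $\dot Y = 4Y - 4Y^2 - 2YZ - 2Z$ from \eqref{ODE system in good coord} and factor out the $Z$-dependence: writing $\dot Y = 4Y(1-Y) - 2Z(1+Y)$, one sees that for $Y \geq 0$ (so $1+Y > 0$) the sign of $\dot Y$ is governed by comparing $4Y(1-Y)$ with $2Z(1+Y)$, i.e.\ $\dot Y < 0 \iff Z > 2Y\frac{1-Y}{1+Y}$, and similarly for the other two cases. This is the entire content of (i); it is a one-line algebraic rearrangement.

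For part (ii), the idea is the standard second-derivative test. At a critical time $s_0$ of $Y$ we have $\dot Y(s_0) = 0$, which by (i) means $Z(s_0) = 2Y\frac{1-Y}{1+Y}$ (evaluated at $s_0$). Differentiating $\dot Y = 4Y(1-Y) - 2Z(1+Y)$ once more gives $\ddot Y = 4\dot Y(1-2Y) - 2\dot Z(1+Y) - 2Z\dot Y$; at $s_0$ the terms containing $\dot Y$ vanish, leaving $\ddot Y(s_0) = -2(1+Y)\dot Z(s_0)$, which matches the second-derivative formula already recorded in Proposition~\ref{prop-crit point convergence}. So the sign of $\ddot Y(s_0)$ is the opposite of the sign of $\dot Z(s_0)$. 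Now substitute the evolution equation $\dot Z = Z(5 - 3X - 3Y - 4Z)$ and plug in $Z(s_0) = 2Y\frac{1-Y}{1+Y}$; since $Z(s_0) \geq 0$, the sign of $\dot Z(s_0)$ is the sign of $5 - 3X - 3Y - 4Z(s_0) = 5 - 3X - 3Y - 8Y\frac{1-Y}{1+Y}$. A short computation combining the last three terms over the denominator $1+Y$ gives
\begin{align*}
5 - 3Y - 8Y\frac{1-Y}{1+Y} = \frac{(5-3Y)(1+Y) - 8Y(1-Y)}{1+Y} = \frac{5 + 2Y - 3Y^2 - 8Y + 8Y^2}{1+Y} = \frac{5Y^2 - 6Y + 5}{1+Y},
\end{align*}
so that $\dot Z(s_0)$ has the same sign as $Q(X(s_0),Y(s_0)) = -3X + \frac{5Y^2 - 6Y + 5}{1+Y}$ (after multiplying by the positive factor $Z(s_0)$ — unless $Z(s_0) = 0$, which forces $Y(s_0) \in \{0,1\}$ and has to be treated as a degenerate case, but there $Q$ can be checked directly). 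Hence at a minimum of $Y$ we have $\ddot Y(s_0) \geq 0$, so $\dot Z(s_0) \leq 0$, so $Q(X(s_0),Y(s_0)) \leq 0$; at a maximum the inequalities reverse.

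The only genuine subtlety is the borderline case where $\dot Y(s_0) = 0$ but $\ddot Y(s_0)$ also vanishes — a degenerate critical point — which is why the conclusion is stated with non-strict inequalities $Q \leq 0$ / $Q \geq 0$ rather than strict ones; no further argument is needed since $\ddot Y(s_0) \geq 0$ at a minimum already includes equality. The main obstacle, such as it is, is purely bookkeeping: making sure the substitution of the critical-point relation $Z = 2Y(1-Y)/(1+Y)$ into $\dot Z$ is carried out correctly and that the algebra collapses to exactly the stated $Q$. There is no analytic difficulty here — everything is polynomial identities on the closure of the cube $\mathcal W$.
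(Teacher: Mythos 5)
Your proposal reproduces the paper's proof essentially verbatim: both arguments compute $\ddot{Y} = -2(1+Y)\dot{Z}$ at a critical point of $Y$ and then substitute the critical-point relation $Z = 2Y\frac{1-Y}{1+Y}$ into $5 - 3X - 3Y - 4Z$ to obtain $Q(X,Y)$, with (i) being the same one-line factoring $\dot{Y}=4Y(1-Y)-2Z(1+Y)$. One quibble on your aside about $Z(s_0)=0$: the claim that there ``$Q$ can be checked directly'' does not actually salvage that case, since at $Y(s_0)=0$, $Z(s_0)=0$ one has $Q = 5 - 3X > 0$ on $[0,1]$ even though $Y\equiv 0$ is a (trivial) minimum; the honest statement is that the lemma implicitly requires $Z>0$, which the paper's proof also assumes (it divides by $Z$ when passing to $\frac{d}{ds}\log Z$) and which holds for every solution to which the lemma is later applied.
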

\begin{proof}
The first statement follows easily from the evolution equation for $Y$. To determine the nature of critical points we compute the second derivative of $Y$ at a critical point:
\begin{align*}
\frac{d^2}{ds^2}Y
=
\frac{d}{ds}
\left(
4Y-4Y^2-2YZ-2Z
\right)
=
-2(1+Y) \dot{Z}
.
\end{align*}
Hence, at a critical point of $Y$ with $Y\geq 0$ the second derivative of $Y$ has the opposite sign as the first derivative of $Z$.
We can use the equation $\dot{Y}=0$ to solve for $Z$ and obtain
\begin{align*}
\frac{d}{ds}\log(Z)
&=
5-3X-3Y-4Z
\\
&=
5-3X-3Y-8Y\frac{1-Y}{1+Y}
=
-3X+\frac{5Y^2-6Y+5}{1+Y}
.
\end{align*} 
We see that a critical point of $Y$ is a minimum
if $Q(X,Y) \leq 0$. The statement for maxima of $Y$ follows analogously.
\end{proof}

\subsection{Proofs of Theorems \ref{theorem1}, \ref{thmB} and \ref{thm-sing}}

\label{section-proofs}

\textit{Theorem \ref{theorem1}}.
We will now proof Theorem \ref{theorem1}. 
Set
\begin{align*}
\frak{X}_{\mathrm{alc}}
&
:=\{\mu\ |\ \Psi_{\mu}\ \text{is complete and}\ \lim_{s\rightarrow\infty} (X,Y,Z) = (1,1,0) \},
\\
\frak{X}_{\mathrm{ac}} 
&
:= \{\mu\ |\ \Psi_{\mu}\ \text{is complete and}\ \lim_{s\rightarrow \infty}(X,Y,Z)=(X_c,Y_c,Z_c)  \},
\\
\frak{X}_{\mathrm{inc}} 
&
:=  \{\mu\ |\ \Psi_{\mu}\ \text{is incomplete} \}.
\end{align*}

By Proposition \ref{prop-ALC crit point} $\Psi_{\mu}$ is ALC if $\mu\in\frak{X}_{\mathrm{alc}}$. By Proposition \ref{lemma - crit point cone is AC}
$\Psi_{\mu}$ is AC if $\mu\in\frak{X}_{\mathrm{ac}}$.
By Lemma \ref{bounds in xyz coords} and Proposition \ref{prop-crit point convergence} we know that the trajectory associated with the Spin(7)-structure $\Psi_{\mu}$ either hits the hypersurface $Y=0$, i.e. the Spin(7)-structure is incomplete and $\mu\in\mathfrak{X}_{\mathrm{inc}}$, or stays inside the cube 
\begin{align*}
\mathcal{W}=(0,1)\times(0,1)\times(0,5/4)
\end{align*}
for all times and converges to a fixed point. This fixed point must lie in the closure of $\mathcal{W}$ and therefore is one of those listed in Remark \ref{critical points}. Because $(0,0,0)$ is a source and $\Psi_{\mu}$ certainly does not lie in the relevant branches  \eqref{Y-axis} and \eqref{X-axis} of the  1-dimensional stable manifolds of the fixed points $(0,1,0)$ and $(1,0,0)$, respectively, we see that $(0,\infty)$ is the disjoint union of $\frak{X}_{\mathrm{alc}}, \frak{X}_{\mathrm{ac}}$ and $\frak{X}_{\mathrm{inc}}$. 

The key idea of the proof is to show that $\frak{X}_{\mathrm{alc}}$
and $\frak{X}_{\mathrm{inc}}$ are both open and non-empty. 
$\frak{X}_{\mathrm{alc}}$ is open because $\Psi_{\mu}$ depends continuously on $\mu$ and $(1,1,0)$ is a sink.
Suppose $\mu\in\frak{X}_{\mathrm{inc}}$ and thus $Y_{\mu}(s_0)=0$ for some $s_0$. Then at time $s_0$ we have $\dot{Y}=-2Z(s_0)<0$, i.e. the flow line actually exits the closure of $\mathcal{W}$, which again is an open condition due to the continuity of $\Psi_{\mu}$ in $\mu$.

To see that $\frak{X}_{\mathrm{alc}}$
and $\frak{X}_{\mathrm{inc}}$ are non-empty, we look at the limiting trajectories as $\mu\rightarrow 0$ and $\mu \rightarrow \infty$
and use that up to the limit the flow lines depend continuously on $\mu$.
As $\mu\rightarrow 0$, the limiting trajectory is \eqref{BS-XYZ} which corresponds to the Bryant--Salamon $\mathrm{G}_2$ holonomy metric on $\Lambda^2_{-}\CP{2}$ and connects the fixed points $(0,1,0)$ and $(1,1,0)$.
Because $(1,1,0)$ is a sink, for small $\mu > 0$ the trajectories associated with $\Psi_{\mu}$ also must converge to $(1,1,0)$. 
As $\mu \rightarrow \infty$ the limiting trajectory initially is contained in the face $X=0$ of the cube $\mathcal{W}$. This trajectory cannot converge to a fixed point because the condition $X=0$ is preserved, the branch \eqref{Y-axis} of the stable manifold of $(0,1,0)$ originates in $(0,0,0)$, and the only other fixed point with $X$-coordinate equal to zero is the source $(0,0,0)$. By Proposition \ref{prop-crit point convergence} it therefore must exit the closure of the cube $\mathcal{W}$ after some time. 
Similarly as in Lemma \ref{bounds in xyz coords}, this is only possible at the hypersurface $Y=0$. Because exiting the closure of $\mathcal{W}$ is an open condition, we have $\mu\in\mathfrak{X}_{\mathrm{inc}}$ if $\mu$ is sufficiently large. 

Because $(0,\infty)$ is connected, $\mathcal{X}_{\mathrm{ac}}$ is non-empty and by Lemmas \ref{comparison for Phu-mu} and \ref{uniqueness-simple} consists of exactly one parameter $\mathfrak{X}_{\mathrm{ac}}=\{ \mu_{\mathrm{ac}} \}$. 
Suppose that $\mu \in (0,\mu_{\mathrm{ac}})$.
We know that either $\mu\in\mathfrak{X}_{\mathrm{alc}}$ or $\mu\in\mathfrak{X}_{\mathrm{inc}}$. By Lemma \ref{comparison for Phu-mu} 
we have $Y_{\mu}(s) > Y_{\mu_{\mathrm{ac}}}(s)$ as long as $\Psi_{\mu}$ exists. Because $Y_{\mu_{\mathrm{ac}}}(s)\rightarrow Y_c$ as $s\rightarrow\infty$, $\Phi_{\mu}$ is complete by Lemma \ref{lemma-reduction to proj system} and thus
we get $\mu\in \mathfrak{X}_{\mathrm{alc}}$. 
Next suppose that $\mu > \mu_{\mathrm{ac}}$. With Lemma \ref{comparison for Phu-mu} we get $Y_{\mu}(s) < Y_{\mu_{\mathrm{ac}}}(s)$ as long as $\Psi_{\mu}$ exists. Because $Y_{\mu_{\mathrm{ac}}}$ converges to $Y_c < 1$   we have $\mu \notin \mathfrak{X}_{\mathrm{alc}}$. Hence we get $\mu\in\mathfrak{X}_{\mathrm{inc}}$. This finishes the proof of Theorem \ref{theorem1}. 

\vspace*{5mm}

\textit{Theorem \ref{thmB}}. The proof is analogous to the proof of Theorem \ref{theorem1}.
Moreover, formulas \eqref{Spin(7)-structure} and \eqref{short-distance-asy-exp-cp2}
show that $\Upsilon_{\tau}|_{\CP{2}}= e_{4356}$ (with the original use of the functions $a$ and $b$). This is the volume form of $\CP{2}$ with respect to the induced metric and the appropriate orientation. Therefore, the zero section is a Cayley submanifold with respect to $\Upsilon_{\tau}$ for all $\tau$. Because the $\CP{2}$ is a generator of $H_4(M_{\CP{2}})$
and its volume with respect to $\Upsilon_{\tau}$ is positive and independent of $\tau$,
the cohomology class of $\Upsilon_{\tau}$ is non-trivial and does not depend on $\tau$.

\vspace*{5mm}

\textit{Theorem \ref{thm-sing}}.
Now that we know how to prove Theorems \ref{theorem1} and \ref{thmB} it is easy to find among the 2-dimensional space of flow lines emanating from the source $(0,0,0)$ 1-parameter families with the same transition behaviour of the asymptotic geometries as the families $\Psi_{\mu}$ and $\Upsilon_{\tau}$. If we make sure that they initially move inside the cube $\mathcal{W}$, that pairs of family members can be compared as in Lemma \ref{comparison for Phu-mu}, and that the flow lines at one end of the interval limit to a flow line which converges to the fixed point $(1,1,0)$ and at the other end limit in a flow line which after some time enters the region $Y<0$, the proof of Theorem \ref{theorem1} carries through as before. One possible way to choose these 1-parameter families is to pick small enough positive constants $\varepsilon$ and $z_0$ such that
all flow lines through points on the line segments $(1-\kappa) (\epsilon,\epsilon,0)+\kappa(\epsilon,0,z), \kappa\in(0,1), z\in(0,z_0)$, originate in $(0,0,0)$. For fixed $z$ denote this 1-parameter family of Spin(7) holonomy metrics by $\Omega^{z}_{\kappa}$. This gives Theorem \ref{thm-sing}.

The families $\Omega^z_{\kappa}$ have been chosen in such a way that
as $\kappa \rightarrow 0$ they collapse to the explicit solution 
\eqref{diagonal},
which is a singular version of the Bryant-Salamon metric corresponding to the case $C=1$ in Remark \ref{rem-reduced system}.
The corresponding solution in $(a,b,c,f)$ coordinates with respect to the $s$-parameter is given by
\begin{align}
\label{singularity}
a(s) = b(s) = e^s, \quad c(s) = \sqrt{e^{2s}+e^{-2s}}, \quad f(s) =0.
\end{align}
We see that $c(s)$ blows up as $s\rightarrow -\infty$. 
Reidegeld \cite[Theorem 5.4.6]{reidegeld} shows that up to discrete symmetries and scale there can be no complete $\SU(3)\times\U(1)$-invariant with principal orbit $\SU(3)$-equivariantly diffeomorphic to $N(1,-1)$ except members of the families $\Psi_{\mu}$ and $\Upsilon_{\tau}$. Therefore, all members of the families $\Omega^z_{\kappa}$ must be singular on the end corresponding to $s\rightarrow -\infty$. Because for $\kappa>0$ we do not know explicit expressions for the solutions $\Omega^z_{\kappa}$, we can only speculate that as $s\rightarrow -\infty$ they behave similar to  \eqref{singularity}.

\subsection{Proof of Theorem \ref{Theorem 2}}

To prove Theorem \ref{Theorem 2} we use a more quantitative approach.
Lemma \ref{lemma-growth of Y} suggests that that we can study extrema of $Y$ by ignoring $Z$ and considering the projection of the trajectory to the $(X,Y)$-plane. More specifically, the curve $Q(X,Y)=0$ partitions the unit square in two disjoint regions such that $Y$ can have a minimum only in one of them and a maximum only in the other one.
To extract information on the asymptotic geometry of complete solutions we should compare the evolution of $Y$ with $Y_c$, the $Y$-coordinate of the fixed point corresponding to the conical solution. In fact, the intersection of the curves $Q(X,Y) = 0$ and $Y = Y_c$ is precisely the projection $(X_c, Y_c)$ of the cone $(X_c, Y_c, Z_c)$. 
We partition the unit square as 
\begin{align*}
\mathcal{D}_1
=
\{
(X,Y)\in[0,1]^2|\ Q(X,Y) > 0,\ Y > Y_c
\},
\\
\mathcal{D}_2
=
\{
(X,Y)\in[0,1]^2|\ Q(X,Y) \leq 0,\ Y > Y_c
\},
\\
\mathcal{D}_3
=
\{
(X,Y)\in[0,1]^2|\ Q(X,Y) > 0,\ Y \leq Y_c
\},
\\
\mathcal{D}_4
=
\{
(X,Y)\in[0,1]^2|\ Q(X,Y) \leq 0,\ Y \leq Y_c
\}
.
\end{align*}

Note that $Q(X,Y)=0$ is equivalent to
\begin{align}
X=\frac{1}{3}\frac{5Y^2-6Y+5}{1+Y}.
\label{equation for x,y on boundary}
\end{align}

\pgfplotsset{ticks=none}

\begin{center}

\begin{tikzpicture}

samples = 500

\pgfmathsetmacro{\XC}{(3/10)*(5-sqrt(5))}
\pgfmathsetmacro{\YC}{(3-sqrt(5))/2}
\pgfmathsetmacro{\BP}{(9-sqrt(41))/10}

\begin{axis}[axis equal image, tick style = {draw=none}, 
xmin = 0, xmax = 1, ymin = 0, ymax = 1, clip mode = individual]

\addplot[name path = xaxis, domain = 0:1, draw = none]{0};

\addplot[name path = upperxaxis, domain = 0:1, draw = none]{1};

\addplot[name path = doomed, thick, domain = 0:1] {\YC};

\addplot[name path = b2, domain = \BP:\YC, thick]((1/3)*(5*x*x-6*x+5)/(1+x),x);

\addplot[name path = b1, thick, domain = \BP:1]((1/3)*(5*x*x-6*x+5)/(1+x),max(x,\YC);

\addplot[gray, opacity = 0.1, domain = 0:\YC](\XC,x);

\addplot[fill = gray,opacity = 0.5]fill between[of = b1 and upperxaxis];

\addplot[fill = gray, opacity = 0.5]fill between[of = xaxis and doomed, soft clip first = {domain = 0:\XC}, soft clip second = {domain = {0:\XC}}];

\addplot[fill = gray, opacity = 0.5]fill between[of = xaxis and b2, soft clip first = {domain = \XC:1}];

\addplot[fill = black]fill between[of = b2 and doomed, soft clip second = {domain=\XC:1}];

\draw[] (axis cs: 0 , 1) node[circle , fill, inner sep = 2pt, label = {below right: $S^5$}]{};

\draw[] (axis cs: 1 , 1) node[circle, fill, inner sep = 2pt, label = {below left: ALC}]{};

\draw[] (axis cs: \XC, \YC) node[circle , fill, inner sep = 2pt, label = {below left: Cone}]{};

\draw[] (axis cs: 1, 0) node[circle, fill, inner sep = 2pt, label = above left: $\CP{2}$]{};

\node[] at (axis cs: 0.3,0.7) {$\mathcal{D}_1$};

\node[] at (axis cs: 0.85,0.7) {$\mathcal{D}_2$};

\node[] at (axis cs: 0.5,0.2) {$\mathcal{D}_3$};

\node[white] at (axis cs: 0.95, 0.34) {$\mathcal{D}_4$};

\end{axis}
\end{tikzpicture}

\end{center}

We can now reformulate Lemma \ref{lemma-growth of Y}
as
\begin{lemma}
\label{regions for extrema}
Let $(X, Y, Z)$ be a local solution contained inside the unit square. The function $Y(s)$ can have a minimum only in $\mathcal{D}_2 \cup \mathcal{D}_4$ and a maximum only in $\mathcal{D}_1 \cup \mathcal{D}_3$.
\end{lemma}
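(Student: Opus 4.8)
The plan is to treat this statement as a geometric repackaging of Lemma \ref{lemma-growth of Y}(ii). First I would record that a solution $(X,Y,Z)$ contained inside the unit square automatically satisfies $Y(s)\geq 0$, so the sign hypothesis on $Y$ in Lemma \ref{lemma-growth of Y} is always met. Next I would observe that, directly from the definitions of $\mathcal{D}_1,\dots,\mathcal{D}_4$, we have $\mathcal{D}_1\cup\mathcal{D}_3=\{(X,Y)\in[0,1]^2\ |\ Q(X,Y)>0\}$ and $\mathcal{D}_2\cup\mathcal{D}_4=\{(X,Y)\in[0,1]^2\ |\ Q(X,Y)\leq 0\}$. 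Then Lemma \ref{lemma-growth of Y}(ii) gives the claim for minima outright: at a minimum $Q(X,Y)\leq 0$, hence $(X,Y)\in\mathcal{D}_2\cup\mathcal{D}_4$. For maxima, Lemma \ref{lemma-growth of Y}(ii) gives $Q(X,Y)\geq 0$, so a maximum lies in $\mathcal{D}_1\cup\mathcal{D}_3$ provided the strict inequality $Q>0$ holds.

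The one point needing care, and the main obstacle, is ruling out a \emph{maximum} of $Y$ on the curve $Q=0$, since that curve belongs to $\mathcal{D}_2\cup\mathcal{D}_4$ rather than $\mathcal{D}_1\cup\mathcal{D}_3$. I would handle this with a short higher-order computation at such a critical point $s_0$. Since $\dot Y(s_0)=0$ is equivalent to $Z(s_0)=2Y(1-Y)/(1+Y)$, and with this substitution $5-3X-3Y-4Z$ simplifies to exactly $Q(X,Y)$, the assumption $Q(X(s_0),Y(s_0))=0$ forces $\dot Z(s_0)=0$; hence $\ddot Y(s_0)=-2(1+Y(s_0))\dot Z(s_0)=0$. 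Differentiating once more and using $\ddot Z(s_0)=-3Z(s_0)\dot X(s_0)$ together with $\dot X(s_0)=2X(s_0)(2-2X(s_0)-Z(s_0))$ yields
\begin{align*}
\dddot Y(s_0)=-2(1+Y(s_0))\ddot Z(s_0)=12\,(1+Y(s_0))\,X(s_0)\,Z(s_0)\,\bigl(2-2X(s_0)-Z(s_0)\bigr).
\end{align*}

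To finish I would argue as follows. If $X(s_0)=0$ then $X\equiv 0$ and $Q(0,Y)=(5Y^2-6Y+5)/(1+Y)>0$ (the quadratic $5Y^2-6Y+5$ has negative discriminant), so the case $Q=0$ cannot occur; if $Z(s_0)=0$ then $Z\equiv 0$ and $\dot Y=4Y(1-Y)\geq 0$ on $[0,1]$, so $Y$ is monotone and has no interior extremum. Otherwise $X(s_0),Z(s_0)>0$ and $1+Y(s_0)>0$, so either $2-2X(s_0)-Z(s_0)\neq 0$, whence $\dddot Y(s_0)\neq 0$ and $s_0$ is an inflection point with horizontal tangent rather than a local extremum; or $2-2X(s_0)-Z(s_0)=0$, in which case $\dot X(s_0)=\dot Y(s_0)=\dot Z(s_0)=0$ and by uniqueness the solution is the constant fixed-point solution, which has no strict extremum. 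In every case no genuine local maximum of $Y$ occurs on $\{Q=0\}$, so every maximum of $Y$ satisfies $Q(X,Y)>0$ and hence lies in $\mathcal{D}_1\cup\mathcal{D}_3$, completing the proof.
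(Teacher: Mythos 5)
Your proof is correct, and it is more careful than the treatment in the paper. The paper introduces Lemma \ref{regions for extrema} with the words ``We can now reformulate Lemma \ref{lemma-growth of Y} as \dots'' and gives no separate argument, implicitly taking the passage from Lemma \ref{lemma-growth of Y}(ii) to the statement about $\mathcal{D}_1,\dots,\mathcal{D}_4$ to be purely definitional. You correctly point out that this is only true for minima: since $\mathcal{D}_2\cup\mathcal{D}_4=\{Q\leq 0\}$, Lemma \ref{lemma-growth of Y}(ii) supplies the minimum claim outright, but for maxima it only yields $Q\geq 0$, whereas $\mathcal{D}_1\cup\mathcal{D}_3=\{Q>0\}$ demands the strict inequality. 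A maximum occurring on the curve $Q=0$ would fall in $\mathcal{D}_2\cup\mathcal{D}_4$, and this is exactly the case that needs to be excluded for the later trapping argument of Lemma \ref{barrier lemma} to be valid (there one argues that $Y$ cannot have a maximum anywhere in $\mathcal{D}_2$, including on its boundary $Q=0$). Your third-derivative analysis, giving
\begin{equation*}
\dddot Y(s_0)=12\,(1+Y(s_0))\,X(s_0)\,Z(s_0)\,\bigl(2-2X(s_0)-Z(s_0)\bigr)
\end{equation*}
at a critical time with $\dot Y(s_0)=\ddot Y(s_0)=0$ and $Q(s_0)=0$, together with the separate treatment of the invariant faces $X\equiv 0$ (where $Q(0,Y)=(5Y^2-6Y+5)/(1+Y)>0$ rules out $Q=0$ a priori) and $Z\equiv 0$ (where $\dot Y=4Y(1-Y)\geq 0$ forces monotonicity), and of the degenerate case $2-2X-Z=0$ (which by uniqueness forces the trajectory to be the constant fixed-point solution), cleanly closes this gap. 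So your argument follows the paper's intended route—reading the lemma off Lemma \ref{lemma-growth of Y}(ii)—but adds the non-trivial step that makes the strict inequality for maxima actually valid, something the paper omits.
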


The key idea in the proof of Theorem \ref{Theorem 2} is that the region $\mathcal{D}_2$  is a trap if $\dot{Y}>0$, and the region $\mathcal{D}_3$ is a trap if $\dot{Y}<0$. The following Lemma makes this precise.

\begin{lemma}
\label{barrier lemma}
Let $(X(s), Y(s), Z(s))$ be a solution of the system \eqref{ODE system in good coord} satisfying the conditions of Lemma \ref{bounds in xyz coords}.
\begin{compactenum}[(i)]
\item
Assume that at some time we have $\dot{Y} \geq 0$ and the solution projects to the interior of the common boundary of $\mathcal{D}_1$ and $\mathcal{D}_2$. Then at this time we have 
\begin{align*}
\frac{d}{ds} Q(X,Y) < 0.
\end{align*}
In particular, if at some time the solution is in $\mathcal{D}_2$ while $\dot{Y}\geq 0$, from then onwards, it is trapped in $\mathcal{D}_2$ and $Y$ is monotone increasing.
\item 
Assume that at some time we have $\dot{Y} \leq 0$ and the solution projects to the interior of the common boundary of $\mathcal{D}_3$ and $\mathcal{D}_4$. Then at this time we have 
\begin{align*}
\frac{d}{ds} Q(X,Y) > 0.
\end{align*}
In particular, if at some time the solution is in $\mathcal{D}_3$ while $\dot{Y}\leq 0$, from then onwards as long as $Y \geq 0$, it is trapped in $\mathcal{D}_3$  and $Y$ is monotone decreasing.
\end{compactenum}
\end{lemma}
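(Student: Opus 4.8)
The plan is to compute $\frac{d}{ds}Q(X,Y)$ along a solution and evaluate it on the relevant boundary curve, using the defining relation $Q(X,Y)=0$ to eliminate one variable. First I would write $Q(X,Y)=-3X+\frac{5Y^2-6Y+5}{1+Y}$ and differentiate:
\begin{align*}
\frac{d}{ds}Q(X,Y)
=
-3\dot{X} + \frac{\partial}{\partial Y}\!\left(\frac{5Y^2-6Y+5}{1+Y}\right)\dot{Y},
\end{align*}
where $\frac{\partial}{\partial Y}\!\left(\frac{5Y^2-6Y+5}{1+Y}\right)=\frac{5Y^2+10Y-11}{(1+Y)^2}$. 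Now substitute $\dot{X}=2X(2-2X-Z)$ from \eqref{ODE system in good coord}. On the common boundary of $\mathcal{D}_1$ and $\mathcal{D}_2$ we have $Q(X,Y)=0$, i.e. $X=\frac{1}{3}\frac{5Y^2-6Y+5}{1+Y}$ by \eqref{equation for x,y on boundary}, so $X$ is determined by $Y$ there; moreover on the \emph{interior} of that boundary we have $Y>Y_c$, which in particular forces $Y$ into an explicit subinterval of $(0,1)$ (bounded below by the $Y$-coordinate where the curve $Q=0$ meets $Y=Y_c$, namely $Y_c$, and above by $1$). The claim $\frac{d}{ds}Q<0$ then becomes, after this substitution, an inequality in the single variable $Y\in(Y_c,1)$ together with the hypotheses $0<Z<\tfrac54$, $0<X<1$ and $\dot Y\ge 0$ (the last of which, by Lemma \ref{lemma-growth of Y}(i), is equivalent to $Z\le 2Y\frac{1-Y}{1+Y}$).

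The key step is to handle the two contributions with the right signs. The term $-3\dot X=-6X(2-2X-Z)$: on the boundary curve $X=\frac13\frac{5Y^2-6Y+5}{1+Y}$, and one checks $2-2X\ge 0$ on the relevant range (equivalently $X\le 1$, which holds since $Q=0$ and $Y\le 1$ together give $X<1$); combined with $Z<\tfrac54$ one needs $2-2X-Z$ to have a controlled sign or, more robustly, one bounds $-3\dot X$ from above using $Z\ge 0$. The term $\frac{5Y^2+10Y-11}{(1+Y)^2}\dot Y$: on $(Y_c,1)$ one should check the sign of $5Y^2+10Y-11$ (it vanishes at $Y=\frac{-5+4\sqrt 5}{5}\approx 0.79$), so this factor is not of one sign on the whole interval, and the estimate must be combined with $\dot Y\ge 0$ and the explicit bound $\dot Y\le$ (something) coming from $Z\ge 0$ in the $\dot Y$ equation. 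The cleanest route is: use $Z\le 2Y\frac{1-Y}{1+Y}$ (from $\dot Y\ge0$) to \emph{replace $Z$} in $\dot X$, and simultaneously use the exact expression for $\dot Y$ on the boundary obtained by plugging $Z=2Y\frac{1-Y}{1+Y}$ into the $\dot Y$ equation when it helps; then $\frac{d}{ds}Q$ on the boundary becomes a rational function of $Y$ alone whose numerator one shows is negative for $Y\in(Y_c,1)$ — a finite, explicit polynomial positivity check. Part (ii) is entirely analogous: on the common boundary of $\mathcal{D}_3$ and $\mathcal{D}_4$ one has $Q=0$ with $Y<Y_c$, $\dot Y\le 0$ (equivalently $Z\ge 2Y\frac{1-Y}{1+Y}$), and the same computation yields $\frac{d}{ds}Q>0$, with the polynomial inequality now checked on $(0,Y_c)$.

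Finally, the trapping ("in particular") statements follow formally. Suppose at time $s_1$ the solution projects into $\mathcal{D}_2$ with $\dot Y(s_1)\ge 0$. By Lemma \ref{regions for extrema}, while the solution stays in $\mathcal{D}_2$ it cannot have a maximum of $Y$, and it is in the region where $Q\le 0$. If it ever left $\mathcal{D}_2$ it would have to cross either the curve $Q=0$ (the common boundary with $\mathcal{D}_1$) or the line $Y=Y_c$ (the common boundary with $\mathcal{D}_4$). It cannot cross $Y=Y_c$ downward because, as long as $\dot Y\ge 0$, $Y$ is nondecreasing and $Y\ge Y_c$ is maintained; and one argues $\dot Y\ge 0$ persists: at a would-be first time $\dot Y=0$ inside $\mathcal{D}_2$, Lemma \ref{lemma-growth of Y}(ii) (combined with $Q<0$ in the interior of $\mathcal{D}_2$, or $Q\le0$ on its closure) forces that critical point to be a minimum, so $Y$ stays nondecreasing afterwards — more carefully, $\frac{d^2}{ds^2}Y=-2(1+Y)\dot Z$ and on $\{\dot Y=0\}$ one has $\dot Z=Z\,Q(X,Y)/1$ up to a positive factor (from the computation in Lemma \ref{lemma-growth of Y}), which is $\le 0$ in $\mathcal{D}_2$, giving $\ddot Y\ge 0$. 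And it cannot cross the curve $Q=0$ into $\mathcal{D}_1$ because part (i) shows $\frac{d}{ds}Q<0$ on that boundary whenever $\dot Y\ge 0$, so $Q$ is strictly decreasing through $0$ there, i.e. the vector field points from $\mathcal{D}_1$ into $\mathcal{D}_2$, never out. Hence the solution is trapped in $\mathcal{D}_2$ and $Y$ is monotone increasing thereafter. Part (ii)'s trapping statement is proved symmetrically, with the caveat "as long as $Y\ge 0$" inherited because the partition of the square and Lemma \ref{lemma-growth of Y} are only asserted for $Y\ge 0$.

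The main obstacle I expect is the polynomial positivity check at the heart of parts (i) and (ii): after clearing denominators, $\frac{d}{ds}Q$ restricted to the boundary curve is a rational function whose numerator is a polynomial in $Y$ of moderate degree, and one must verify it has the correct sign on the open interval $(Y_c,1)$ (resp. $(0,Y_c)$), where $Y_c=\frac{3-\sqrt5}{2}$ is irrational — so the verification should be arranged either by exhibiting an explicit sum-of-squares-type decomposition or by isolating roots and checking endpoints and the sign of the derivative, rather than by a one-line inequality. Getting the interaction between the two hypotheses $\dot Y\ge 0$ (i.e. the inequality on $Z$) and $Z<\tfrac54$ to collapse everything to a genuinely one-variable statement is the step that requires care.
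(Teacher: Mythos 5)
Your overall strategy matches the paper's proof closely: compute $\frac{d}{ds}Q=\nabla Q\cdot(\dot X,\dot Y)$, restrict to the boundary curve $Q=0$ via \eqref{equation for x,y on boundary}, translate $\dot Y\ge 0$ into $Z\le 2Y\tfrac{1-Y}{1+Y}$, and reduce to a one-variable polynomial sign check, with the trapping conclusion following from Lemma \ref{regions for extrema} plus the preserved bounds $X<1$, $Y<1$.

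However, the particular substitution you flag as the ``cleanest route'' --- plugging $Z=2Y\tfrac{1-Y}{1+Y}$ into the $\dot Y$ equation --- would simply give $\dot Y=0$, and that is \emph{not} a valid upper bound for the term $\partial_Y Q\,\dot Y$: when $\partial_Y Q>0$ (which happens on the upper part of the boundary, since $5Y^2+10Y-11$ changes sign at $Y\approx0.79<1$), the worst case for that term is $Z=0$, giving $\dot Y\le 4Y(1-Y)$, not $Z$ at its largest. The paper handles this by using \emph{opposite} extremes for the two terms: $Z\le 2Y\tfrac{1-Y}{1+Y}$ in $-6X(2-2X-Z)$ (where the sign makes this an overestimate), and $Z\ge 0$ in $\dot Y$, after first noting that if $\partial_Y Q<0$ the term is already nonpositive and nothing needs to be checked. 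Also, the hypothesis $Z<\tfrac54$ from Lemma \ref{bounds in xyz coords} plays no role in the estimate; only $Z\ge 0$ and $Z\le 2Y\tfrac{1-Y}{1+Y}$ enter. With these two fixes, your argument aligns with the paper's, and the remaining work is exactly the polynomial positivity check you anticipated (the paper arrives at $5Y^4+57Y^3-119Y^2+75Y-10>0$ for $Y>1/5$, which suffices since $Y>Y_c>1/5$ on the relevant boundary). Your treatment of the trapping statement, while slightly more roundabout than the paper's (which just observes that $X<1$, $Y<1$ are preserved and $\dot Y\ge 0$ forces any exit through the $Q=0$ boundary), is logically sound.
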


\begin{figure}
\centering
\includegraphics[scale=.4]{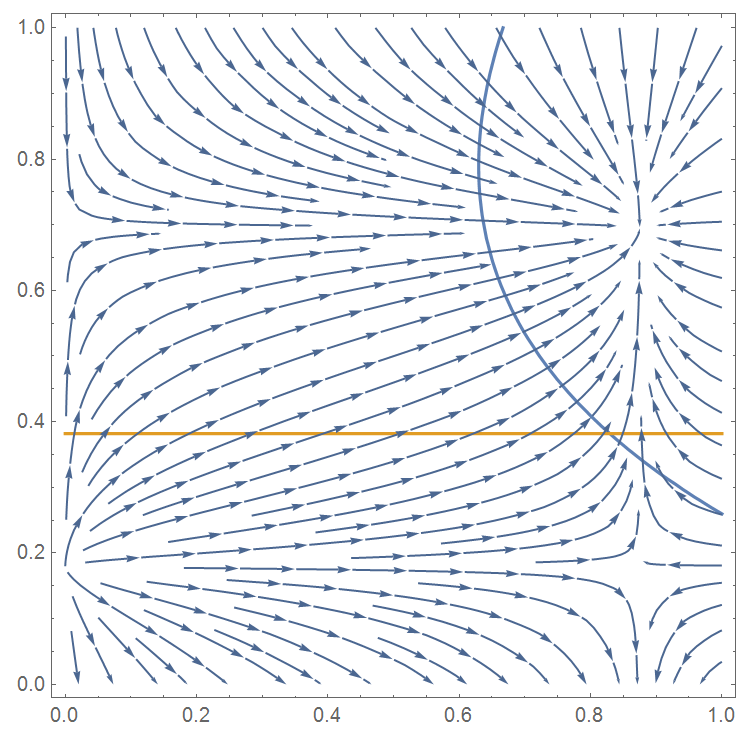}
\quad\quad
\includegraphics[scale=.4]{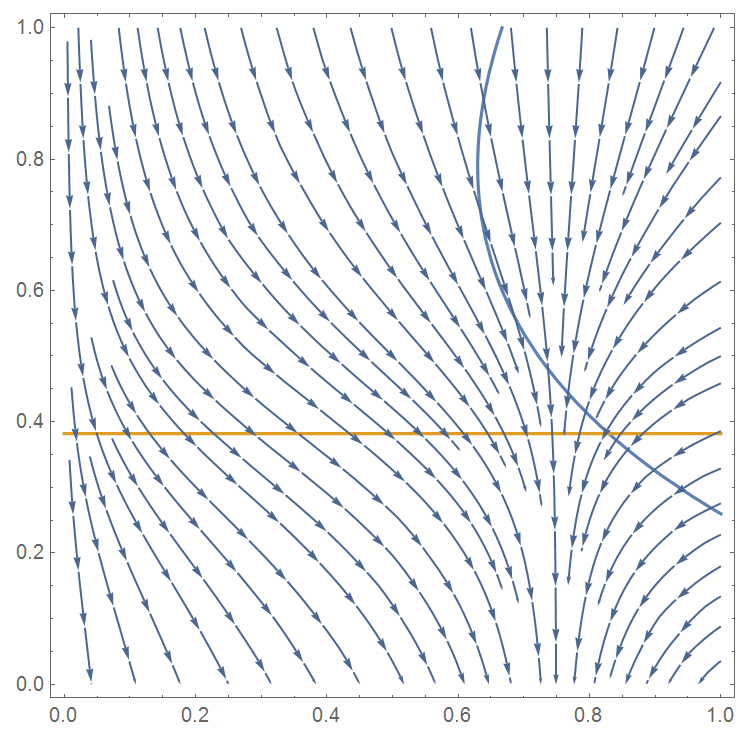}
\caption{To illustrate Lemma \ref{barrier lemma},
we have plotted the projection of the vector field to the $(X,Y)$-plane at the slices $Z=1/4$ and $Z=1/2$. At the common  boundary of $\mathcal{D}_1$ and $\mathcal{D}_2$, the vector field points into $\mathcal{D}_2$ if the $Y$-component is positive. At the common boundary of $\mathcal{D}_3$ and $\mathcal{D}_4$, the vector field points into $\mathcal{D}_3$ if the $Y$-component is negative.}
\end{figure}

\begin{proof}
\textbf{(i):}
We have 
\begin{align*}
\nabla Q(X,Y)
=
\left(
-3,\frac{5Y^2+10Y-11}{(1+Y)^2}
\right).
\end{align*}
Because $\dot{Y} \geq 0$, by Lemma \ref{lemma-growth of Y} (i) we have $Z \leq 2Y\frac{1-Y}{1+Y}$. Combining this with (\ref{equation for x,y on boundary})
we get
\begin{align}
\partial_X Q(X,Y) \dot{X}
&=
-6X(2-2X-Z)
\nonumber
\\
&
\leq
-2\frac{5Y^2-6Y+5}{1+Y}
\left(
2-\frac{2}{3}\frac{5Y^2-6Y+5}{1+Y}-2 Y \frac{1-Y}{1+Y}
\right)
\nonumber
\\
&=
\frac{8}{3}
\frac{(Y^2-3Y+1)(5Y^2-6Y+5)}{(1+Y)^2}.
\label{barrier estimate 1}
\end{align}
This function is negative if $Y> Y_c = \frac{3-\sqrt{5}}{2}$ and positive if $Y < Y_c = \frac{3-\sqrt{5}}{2}$.

Because $\partial_X Q(X,Y) \dot{X}$ is negative on the common boundary between $\mathcal{D}_1$ and $\mathcal{D}_2$ and $\dot{Y} \geq 0$ by assumption, we can assume $\partial_Y Q(X,Y)$ to be non-negative. This allows the estimate 
\begin{align}
\partial_Y Q(X,Y) \dot{Y}
&=
\partial_Y Q(X,Y)
(4Y-4Y^2-2YZ-Z)
\nonumber
\\
& 
\leq
\partial_Y Q(X,Y)4Y(1-Y)
=
\frac{5Y^2+10Y-11}{(1+Y)^2}
4Y(1-Y).
\label{barrier estimate 2}
\end{align}
Combining (\ref{barrier estimate 1}) and (\ref{barrier estimate 2}) we get
\begin{align*}
\frac{d}{ds} Q(X,Y)
&=
\nabla Q(X,Y)\cdot (\dot{X},\dot{Y})
\\
&
\leq
\frac{8}{3}
\frac{(Y^2-3Y+1)(5Y^2-6Y+5)}{(1+Y)^2}
+
\frac{5Y^2+10Y-11}{(1+Y)^2}
4Y(1-Y)
\\
&=
-\frac{4}{3}
\frac{5Y^4+57Y^3-119Y^2+75Y-10}{(1+Y)^2}
,
\end{align*}
which is negative for $Y > \frac{1}{5}$. 

By Lemma \ref{regions for extrema} the function $Y$ cannot have a maximum as long as the solution is in $\mathcal{D}_2$. 
Because the conditions $X <1$ and $Y < 1$ are preserved by Lemma \ref{bounds in xyz coords} and $\dot{Y} \geq 0$ as long as the solution is in $\mathcal{D}_2$, it can exit $\mathcal{D}_2$ only along the common boundary of $\mathcal{D}_1$ and $\mathcal{D}_2$. This was shown to be impossible.

\vspace{2mm}

\textbf{(ii):}
Because now $\dot{Y} \leq 0$, we get (\ref{barrier estimate 1}) with reversed inequality sign. 
This estimates 
$\partial_X Q(X,Y) \dot{X}$ from below by a function which is positive if $Y < Y_c = \frac{3-\sqrt{5}}{2}$. Furthermore $\partial_Y Q(X,Y)$ is negative if $Y < Y_c = \frac{3-\sqrt{5}}{2}$. Because we assume $\dot{Y} \leq 0$ we get 
\begin{align*}
\frac{d}{ds} Q(X,Y) =
\nabla Q(X,Y) \cdot (\dot{X},\dot{Y})=
\partial_X Q(X,Y) \dot{X}
+
\partial_Y Q(X,Y) \dot{Y}
>0.
\end{align*}
By Lemma \ref{regions for extrema} the function $Y$ cannot have a minimum in $\mathcal{D}_3$.
As $ 0 < X < 1$ is preserved by Lemma \ref{bounds in xyz coords} and $\dot{Y} \leq 0$ by assumption, the solution can exit 
$\mathcal{D}_3$ only at $Y = 0$ or on the common boundary with $\mathcal{D}_4$. The latter was shown to be impossible.
\end{proof}

\begin{proof}[Proof of Theorem \ref{Theorem 2}]
It is clear that $\lambda=0$ gives the Spin(7)-cone.
The short distance asymptotic expansion (\ref{short time asy exp cs sol}) for $\Psi^{\mathrm{cs}}_{\lambda}$ gives
\begin{align*}
Y(t)
&
\approx
Y_c(1-9.86 \lambda t^{\nu_2}+\mathcal{O}(t^{2\nu_2})),
\\
Q(X(t),Y(t))
&
\approx
14.41 \lambda t^{\nu_2}+\mathcal{O}(t^{2\nu_2}).
\end{align*}
If $\lambda < 0$, this implies that $\Psi^{\mathrm{cs}}_{\lambda}$
enters the region $\mathcal{D}_2$ with $\dot{Y}> 0$. By Lemma \ref{barrier lemma} (i) it follows that the solution is trapped in $\mathcal{D}_2$ for all times and $Y$ is monotone increasing. By Lemma \ref{lemma-reduction to proj system} it is forward complete and by Proposition \ref{prop-crit point convergence} it has to converge to a fixed point $(X_{\infty}, Y_{\infty}, Z_{\infty})$ which projects onto $\mathcal{D}_2$ with $Y_{\infty} > Y_c$. The only such fixed point is $(1,1,0)$ and hence $\Psi^{\mathrm{cs}}_{\lambda}$ is ALC by Proposition \ref{prop-ALC crit point}. 

If $\lambda > 0$ the solution enters $\mathcal{D}_3$ with $\dot{Y}<0$.  By Lemma \ref{barrier lemma} (ii) it is trapped there as long as $Y \geq 0$ and $Y$ is monotone decreasing. 
If the solution was forward complete, then by Proposition \ref{prop-crit point convergence} it would converge to some fixed point $(X_{\infty}, Y_{\infty}, Z_{\infty})$ with $Y_{\infty} < Y_c$. But $(0,0,0)$ is a source and $\Psi^{\mathrm{cs}}_{\lambda}$ certainly does not sweep out the 1-dimensional stable manifold of $(1,0,0)$ given by the solution \eqref{X-axis}. Therefore, $\Psi^{\mathrm{cs}}_{\lambda}$ cannot extend to a forward complete metric if $\lambda>0$.
\end{proof}

\begin{figure}[h]
\includegraphics[scale=.4]{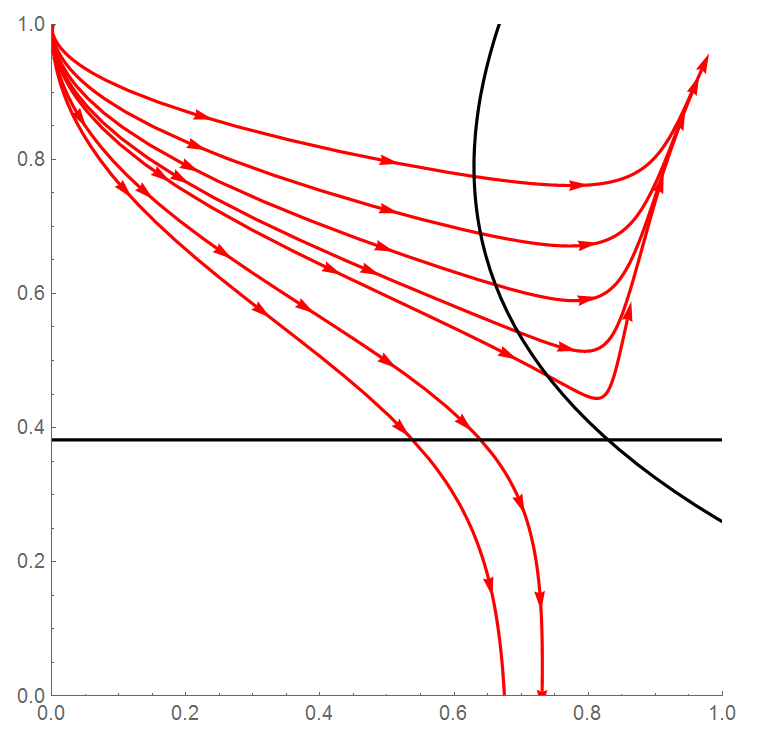}
\quad\quad
\includegraphics[scale=.4]{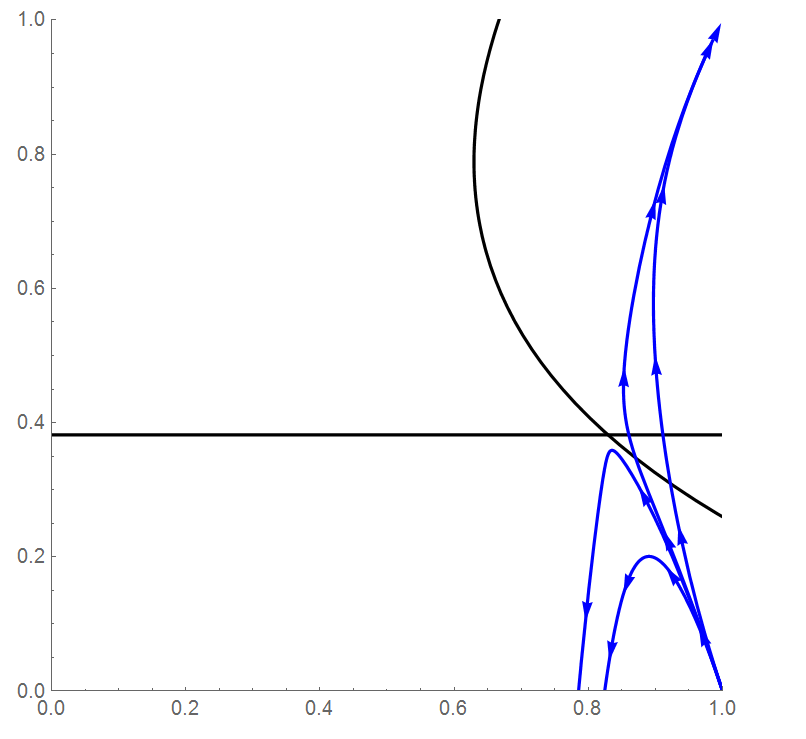}
\caption{Left: $\Psi_{\mu}$ is ALC if and only if the $Y$-coordinate of the associated flow line has a minimum, which must happen in $\mathcal{D}_2$. Right: $\Upsilon_{\tau}$ is incomplete if and only if the $Y$-coordinate of the associated flow line has a maximum, which must happen in $\mathcal{D}_3$.}
\label{2Dplot}
\end{figure}

Lemma \ref{barrier lemma} and a similar trapping argument can also be used to give more quantitative proofs of Theorems \ref{theorem1} and \ref{thmB} as in the author's PhD-thesis \cite{myPhD}. 
E.g., for the family $\Psi_{\mu}$ we can show that $\Psi_{\mu}$
is ALC if and only if the function $Y_{\mu}(s)$ has a minimum at some time. That this condition is necessary is clear: $Y_{\mu}(s)$ starts out at the value $1$ and is initially decreasing. Because the $Y$-coordinate of the ALC end $(1,1,0)$ is equal to $1$, $Y_{\mu}(s)$ must have a minimum at some time. With the help of Lemma \ref{barrier lemma} one can show that $Y_{\mu}(s)$ can only have a minimum while the solution is in the region $\mathcal{D}_2$. But then it is trapped there with $Y_{\mu}(s)$ monotone increasing and therefore it must converge to $(1,1,0)$. While this approach is more intricate, it has the added benefit that it gives us a good idea about the actual shape of the associated trajectories, which we illustrate in figure \ref{2Dplot}.

\bibliographystyle{amsalpha}

\bibliography{bibliography}

\end{document}